\newtheorem{question}{Question}
\def\bt{{t^*}}
\def\bk{{k^*}}
\def\cF{{\mathcal F}}
\def\cT{{\mathcal T}}
\def\cS{{\mathcal S}}
\def\bn{{n^*}}
\def\bl{{l^*}}
\definecolor{orange}{rgb}{1,0.5,0}
\def\orange{\color{orange}}
\definecolor{bluegray}{rgb}{0.4, 0.6, 0.8}
\def\bg{\color{bluegray}}
\def\blk{\color{black}}
\newcommand{\ignore}[1]{}
\DeclareMathAlphabet{\mathpzc}{OT1}{pzc}{L}{it} 
\def\cal#1{{\mathcal #1}} 
\def\a{\alpha}
\newtheorem{definition}{Definition}[section]
\newtheorem{proposition}[definition]{Proposition}
\newtheorem{theorem}{Theorem}
\newtheorem{claim}{Claim}
\newtheorem{corollary}{Corollary}
\newtheorem{remark}{Remark}
\newtheorem{lemma}[definition]{Lemma}
\def\vphi{\varphi}
\def\geq{\geqslant}
\def\leq{\leqslant}
\def\R{\mathbb{R}}
\def\T{\mathbb{T}}
\def\eps{\varepsilon}
\def\Z{\mathbb{Z}}
\def\N{\mathbb{N}}
\def\J{J_*}
\def\Ji{J_i}
\def\cF{\mathcal F}
\def\tu{\tilde{u}}
\def\ts{\tilde{s}}
\def\tv{\tilde{v}}
\def\Nzero{\mathcal N_0}
\def\None{\mathcal N_1}
\newcommand{\bea}{\begin{eqnarray}}
  \newcommand{\eea}{\end{eqnarray}}
  \newcommand{\beab}{\begin{eqnarray*}}
  \newcommand{\eeab}{\end{eqnarray*}}
\renewcommand{\a}{\alpha}
  \newcommand{\be}{\begin{equation}}
  \newcommand{\ee}{\end{equation}}
\newcommand{\cI}{\mathcal I}
\newcommand{\cD}{\mathcal D}
\newcommand{\cE}{\mathcal E}
\title{Lebesgue spectrum of countable multiplicity for conservative flows on the torus}
\author{Bassam Fayad, Giovanni Forni and Adam Kanigowski}
\begin{document}
\baselineskip=14pt \maketitle

{\bg
\begin{abstract} \blk We study the spectral measures of conservative mixing flows on the $2$-torus having one degenerate singularity. We show that, for a sufficiently strong singularity,  the spectrum of these flows is typically Lebesgue with infinite multiplicity. 

 For this, we use two main ingredients: 1) a proof of  absolute continuity of the maximal spectral type for this class of non-uniformly stretching flows that have an irregular decay of correlations, 2) a geometric criterion that yields infinite Lebesgue multiplicity of the spectrum and that is well adapted to rapidly mixing flows.
\end{abstract}
}

\bg \section{Introduction} \blk

Smooth conservative, or area-preserving, flows on surfaces provide one of the fundamental examples  in the theory of dynamical systems.  {These flows are often called {\em multi-valued, or locally, Hamiltonian flows}, following the terminology introduced by S.~P.~Novikov \cite{novikov82},  who emphasized their relation with solid state physics \cite{novikov95}. 
In fact, smooth conservative surface flows preserve by definition a smooth area-form, hence they are generated by the
symplectic dual of a closed $1$-form, which is locally the exterior derivative of a multi-valued Hamiltonian function.}

Multi-valued Hamiltonian flows can be viewed as special flows above circle rotations, or more generally above IETs (interval exchange transformations). One can thus also view them as time changes of translation flows on surfaces. When the flow has fixed points, the ceiling function has singularities, that often appear  at the discontinuity points of the IET. 

 The study of conservative surface flows goes back to Poincar\'e, and it knew spectacular advances with the works of the Russian school starting from the beginning of the second half of last century till the early 90s. Recently, further substantial advances were made in their understanding  and they attracted a lot of attention due to their connections with billiards on rational polygons and Teichm\"{u}ller theory, as well as with parabolic dynamics such as the dynamics of horocycle flows and Ratner theory. 
  
Questions on the ergodic and spectral theory of conservative surface flows have a long history. 
The simplest setting to be examined is  that of smooth conservative flows {on the $2$-torus} without periodic orbits. This setting is reduced to that of reparametrizations (time changes) of minimal translation flows 
(see  for example the textbook \cite{Co-Fo-Si} by I.~P.~Cornfeld, S.~V.~Fomin  and Ya.~G.~Sinai).  A.~N.~Kolmogorov~\cite{kolmogorov} showed that such reparametrized flows are typically conjugated to translation flows, since it suffices for this that the slope of the translation flow belongs to the full measure set of  Diophantine numbers. 
He also observed that more exotic behaviors should be expected for the reparametrized flows in the case of Liouville slopes. M.~D.~Shklover indeed obtained in \cite{shklover} examples of real analytic reparametrizations of linear flows on the $2$-torus that were weak mixing (continuous spectrum). 
Not long after Shklover's result, A.~B.~Katok \cite{katok}, and later A.~V.~Kochergin~\cite{Koc1}, showed the absence of mixing for non-singular conservative flows on the $2$-torus, hence establishing that in a sense shear of nearby orbits near singularities is the only mixing mechanism available for smooth surface flows\footnote{This confirmed Kolmogorov's intuition about the absence of mixing for analytic reparametrizations of translation flows, but only in this two-dimensional setting. Indeed, mixing analytic reparametrizations of translation flows on $\T^3$ were obtained in \cite{Fa2}.}.
Note that analytic reparametrizations of Liouvillean irrational flows of the $2$-torus can have a mixed singular continuous and discrete maximal spectral type \cite{FKW,GP}.

\subsection*{ \bg Kochergin mixing flows on surfaces.} \blk The simplest mixing examples of conservative surface flows are those with one (degenerate) singularity on the $2$-torus produced by Kochergin in the 1970s \cite{Koc2}.  They are time changes of linear flows on the $2$-torus with an irrational slope and with a single rest point (see Figure 1 and the last section of this introduction for a precise definition of {\em Kochergin flows}).
Equivalently these flows can be viewed as special 
flows under a ceiling, or roof,  function with at least one power singularity (see Figures \ref{orbits} and \ref{sym} and the precise definition of special flows in Section \ref{sec.decay}). 
 
\begin{figure}[htb]
 \centering
  \resizebox{!}{5cm}{\includegraphics[angle=1]{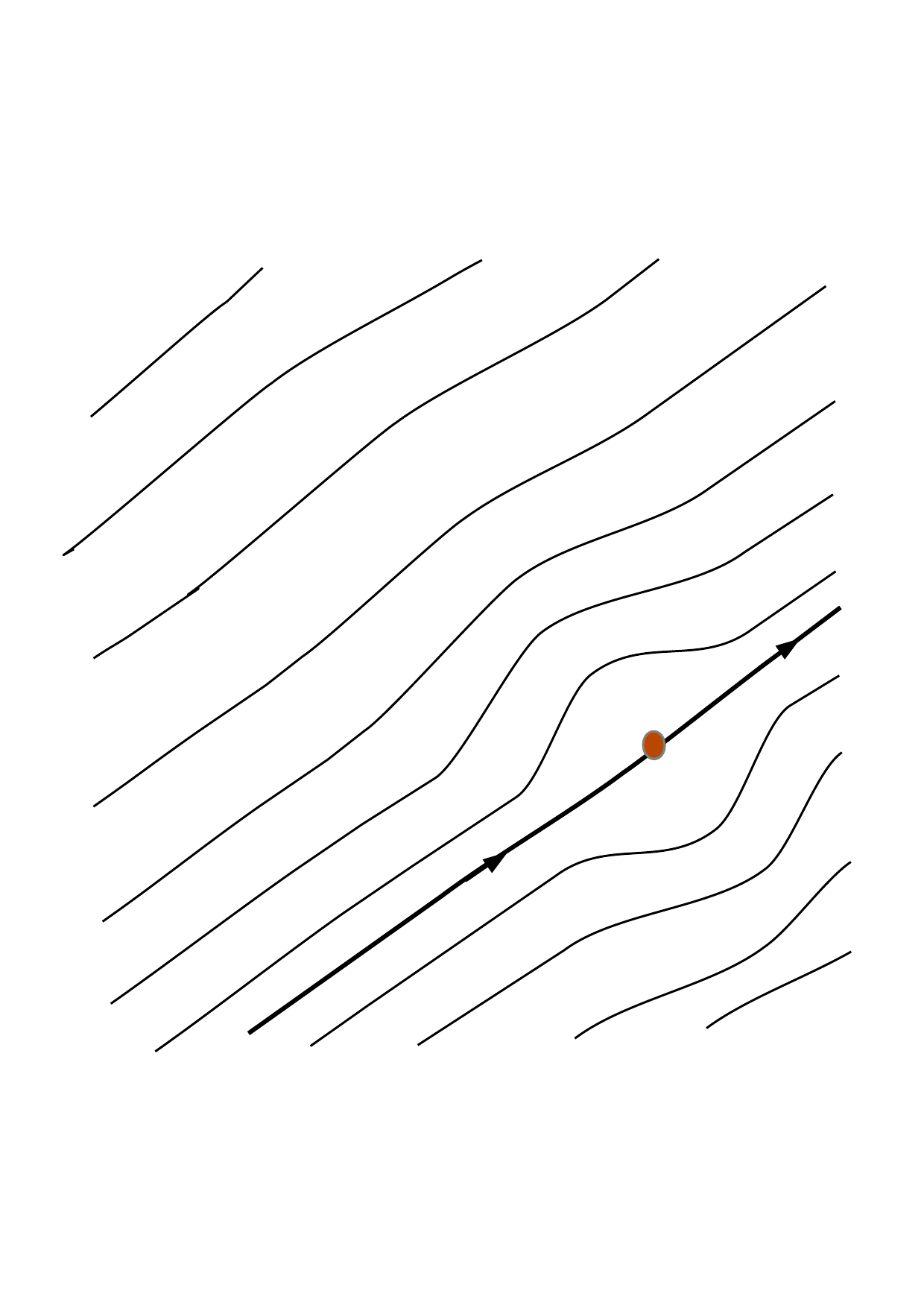}}
\caption{\small Torus flow with one degenerate saddle acting as a stopping point.} 
  \label{kochergin}
\end{figure}

Multi-valued Hamiltonian flows on higher genus surfaces can also be mixing (or mixing on an open ergodic component) in the presence of non-degenerate saddle type singularities that have some asymmetry  (see Figure 2).
Such flows are called {\em Arnol'd flows} and their mixing property, conjectured by V.~I.~Arnol'd in \cite{arnold},  was obtained by K.~Khanin and Ya. G.~Sinai \cite{KS} and, later, in more generality by Kochergin \cite{Koc4,Koc9}.  

Kochergin also proved  that for suspension flows under a roof function with {\it symmetric logarithmic} singularities over a circle rotation, mixing fails for almost every rotation number \cite{Koc3}. Many years later he proved in \cite{Koc8} that indeed mixing fails in this case for {\it all} rotation numbers. 

Ulcigrai substantially extended Kochergin's results by proving in \cite{CU3} that conservative flows with non-degenerate saddle singularities are generically not mixing (due to symmetry in the saddles). Recently, J.~Chaika and A.~Wright \cite{chaika} gave mixing examples with finitely many non-degenerate fixed points and no saddle connections on a closed surface of genus $5$.  

Although mixing was thoroughly studied for conservative surface flows, almost nothing was known about the spectral type {and spectral multiplicity} of the mixing examples (see for example the survey by Katok and 
J.-P.~Thouvenot \cite{KT} and the discussion therein). By spectral type of a flow $\{T^t\}$ we mean the spectral type of the associated Koopman operator $U_t : L^2(M,\mu)  : f \to f \circ T^t$. 

The nature of the spectral type {and multiplicity} of mixing surface flows naturally arose as soon as such mixing examples were obtained, especially since, at that time, the spectral theory of dynamical systems was a matter of major interest for the Russian school in that second half of last century (see for example the textbook \cite{Co-Fo-Si} or Kolmogorov's 1954 ICM address~\cite{Kicm}). Since then, the question about the possibility of a Lebesgue maximal spectral type for mixing surface flows appeared in many monographs and surveys (see for example the  discussions in \cite{KT}, \cite{LemEnc} or \cite{DF}). In the survey~\cite{KT}, by Katok and Thouvenot, it is remarked that ``Some estimate of correlation decay have been obtained but they are too weak to conclude that the spectrum is absolutely continuous.''  Finally,  Kochergin at the end of  his paper~\cite{Koc9}  asks about rate of mixing and absolutely continuous spectrum (Problem 4) and multiple mixing (Problem 6) for flows on surfaces.

In this paper we treat the simplest mixing examples that are Kochergin flows on the torus with a single degenerate rest point. 
 We give a general statement here that will be made more specific in the last section of this introduction. 

\begin{theorem} \label{statement} There exists a real analytic  conservative flow on $\T^2$ with exactly one singularity, with Lebesgue spectral type of countable multiplicity.
\end{theorem}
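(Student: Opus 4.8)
The plan is to realise the desired flow as a Kochergin-type special flow over a Liouville circle rotation with a single, sufficiently strong power singularity in the roof function, and to read off its maximal spectral type from the decay of correlations of a total family of observables. Let $R_\alpha\colon\T\to\T$, $R_\alpha x=x+\alpha$, be an irrational rotation and let $f\colon\T\to\R_{>0}$ be $C^\infty$ on $\T\setminus\{x_0\}$ with a two-sided power singularity at a point $x_0$, say $f(x)\asymp A_{\pm}|x-x_0|^{-\gamma}$ as $x\to x_0$ from the right, resp. left, with exponent $\gamma\in(0,1)$ taken close to $1$ (the ``sufficiently strong'' regime) and one-sided constants $A_{\pm}$ possibly distinct. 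By the correspondence between area-preserving flows on $\T^2$ and special flows over circle rotations --- the version for a flow with one degenerate fixed point corresponding, as in Kochergin's work, to a roof function with a single power singularity --- the special flow $T^f_t$ is measure-theoretically isomorphic to a smooth area-preserving flow on $\T^2$ with exactly one (degenerate) fixed point, whose order of degeneracy is prescribed by $\gamma$; hence it suffices to exhibit such $\alpha$ and $f$ for which $T^f$ has Lebesgue maximal spectral type. The slope $\alpha$ is chosen Liouville, with continued-fraction convergents $p_n/q_n$ for which $q_{n+1}$ grows faster than any prescribed function of $q_n$, so that $\|q_n\alpha\|$ is super-polynomially small; this Liouville freedom is precisely what escapes Katok's rigidity theorem (which forces purely singular spectrum when $\alpha$ is Diophantine), and is the source of the ``typically'' in the abstract, via a residual/prevalent set of such $\alpha$.

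I would then reduce the maximal spectral type to correlation estimates. Working in suspension coordinates $(x,s)$, $0\le s<f(x)$, one takes a dense family of observables of product form $\psi(x)\chi(s)$ with $\psi\in C^\infty(\T)$; after Fourier-expanding in the flow direction $s$, the study of their spectral measures $\sigma_\phi$ reduces to the behaviour of the correlations $c_\phi(t):=\langle U^f_t\phi,\phi\rangle$ of observables $\phi=e^{2\pi i m s}\psi(x)$, $m\in\Z\setminus\{0\}$, and unfolding the Koopman operator against the suspension structure expresses $c_\phi(t)$ through exponential sums, over rotation orbit segments $x,R_\alpha x,\dots,R_\alpha^{N(x,t)-1}x$, of terms $e^{2\pi i m(t-S_N f(x))}$, where $S_N f=\sum_{j<N}f\circ R_\alpha^j$ are the Birkhoff sums of $f$ and $N(x,t)$ counts returns before time $t$. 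The crux is a quantitative cancellation estimate at the renormalisation scales $N\asymp q_n$: an orbit segment of length $q_n$ passes within $\|q_n\alpha\|$ of $x_0$, so on the corresponding short intervals of $x$ the Birkhoff sum $x\mapsto S_{q_n}f(x)$ is governed by the single nearly-singular term $\asymp\dist(x+j_0\alpha,x_0)^{-\gamma}$; consequently it is a diffeomorphism onto a very long interval with large derivative, and the pushforward of Lebesgue measure under it has density controlled above and below. This spreading of phases forces the exponential sums, and hence $c_\phi(t)$, to decay like $|t|^{-\eta}$ with $\eta=\eta(\gamma)>\tfrac12$, once $\gamma$ is close enough to $1$ and $\alpha$ is Liouville enough. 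I expect this to be the main obstacle: the stationary-phase and renormalisation estimates must be made uniform over all scales $n$ at once, with careful control of the interplay between the exponent $\gamma$, the Liouville growth of $q_{n+1}$, and the small exceptional set of points $x$ whose orbit of length $q_n$ enters a neighbourhood of $x_0$ --- the same delicate Birkhoff-sum analysis that underlies quantitative mixing estimates for Kochergin flows, here sharpened to an $L^2$-in-time bound on correlations.

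Granting that $c_\phi\in L^2(\R,dt)$ for every $\phi$ in this family, Plancherel's theorem shows that each spectral measure $\sigma_\phi$ is absolutely continuous with $L^2$ density; since the family is total and absolute continuity passes to the countable convex combinations that realise a maximal spectral type, the maximal spectral type of $T^f$ is absolutely continuous with respect to Lebesgue measure on $\R$. To upgrade absolute continuity to equivalence I would prove a matching lower bound for a single, suitably chosen observable: exploiting that the scales $q_n$ are geometrically sparse and their contributions to $c_\phi$ essentially decouple, one extracts the leading order of $c_\phi(t)$ along a set of times of positive density and deduces, by a Tauberian/self-similarity argument, that the density of $\sigma_\phi$ is bounded below by a positive constant multiple of Lebesgue measure; hence $\sigma_\phi$ is equivalent to Lebesgue, so the maximal spectral type of $T^f$ is the class of Lebesgue measure.

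Finally one records that the special flow $T^f$ just constructed is genuinely isomorphic to a $C^\infty$ area-preserving flow on $\T^2$ with exactly one singularity, a degenerate fixed point of the order prescribed by $\gamma$ --- which is the assertion of Theorem \ref{statement}. Only the correlation estimates of the second and third paragraphs carry essential content; the suspension reduction of the first paragraph and the concluding identification are, modulo bookkeeping, standard in this circle of ideas.
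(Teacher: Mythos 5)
Your outline has two genuine gaps that go to the heart of the matter. First, the arithmetic of $\alpha$ is backwards. Katok's rigidity theorem concerns smooth reparametrizations of linear flows \emph{without} singularities; once the roof has a power singularity the shear mechanism works for all irrational $\alpha$, and the theorem here is proved precisely for $\alpha$ in the full-measure, Diophantine-type class $D_{\log,\xi}$ — the Diophantine condition is \emph{needed} to control the Birkhoff sums (the Denjoy–Koksma estimates of Lemma \ref{koksi}, the bound $q_{n+2}<q_n\log^{2+3\xi}q_n$ used in Lemma \ref{sec.fir}, the measure of the bad set). With a genuinely Liouville $\alpha$, at times comparable to $q_n$ (with $q_{n+1}$ arbitrarily large) the special flow is close to periodic on a large part of the space over very long time ranges, and your claimed uniform bound $|c_\phi(t)|\lesssim |t|^{-\eta}$ with $\eta>\tfrac12$ cannot hold, nor is there any obvious substitute giving $c_\phi\in L^2(\R)$. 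Second, allowing $A_+\neq A_-$ is not an option: an asymmetric power singularity cannot be realized by a smooth area-preserving flow on $\T^2$ with one degenerate saddle (see Remark \ref{rem.singularity} and \eqref{asu}–\eqref{asu3}); symmetry is forced, and with a symmetric singularity the pointwise decay $t^{-1/2-\eps}$ is simply false — cancellations in $\varphi'_N$ near the denominators produce a set of measure of order $t_n^{-1/2+\eps}$ on which the time-$t_n$ map is nearly the identity, so the correlations themselves can be as large as $t_n^{-1/2+\eps}$. Dealing with this bad set (the towers $U_i$ of Proposition \ref{prop.badset}, the splitting into Propositions \ref{tdec} and \ref{bn}, and the bootstrap that averages over $t\in[l^{21/20},(l+1)^{21/20}]$ using the regular structure of $T^{-t}_{\a,\vphi}(U_i)$) is the main content of the paper and is entirely absent from your plan.

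Two further points. Your observables $\psi(x)e^{2\pi i m s}$ are of the ``nice product'' type for which the known decay for such flows is only of order $t^{-1/4}$ (cf.\ \cite{Fa}); the square-integrability here is obtained for smooth \emph{coboundaries} with smooth transfer function (the class $\cF$), because the transfer function is what converts the stretching of $\varphi_N$ into a gain of a full factor $1/\varphi'_N$ in Lemma \ref{prop.bI}, rather than an oscillatory-sum estimate — your stationary-phase route would have to recover a rate better than $1/2$ for non-coboundary observables, which is not supported by what is known. Finally, the upgrade from absolute continuity to Lebesgue equivalence via a Tauberian lower bound on the density of a single spectral measure is only a hope as stated; the paper argues instead by constructing functions supported on thin flow boxes along long orbit segments whose correlation functions are essentially arbitrary on arbitrarily long intervals, and derives a contradiction with the existence of a nonzero $\omega\in L^2(\R)$ orthogonal to all correlation functions (Lemmas \ref{lemma:orth_cond}, \ref{lemma:spect_type_tail} and \ref{lemma:w_zero}), an argument which moreover requires the careful dependence of all correlation constants on the $C^1$ norms of the observables that is tracked throughout the proof.
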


Note that besides their own interest, mixing conservative flows attracted an additional attention since they stood as the main and almost only natural class of mixing transformations for which higher order mixing has not been established, nor disproved.  The first and third author of this paper established multiple mixing only for a very special class of mixing Kochergin flows \cite{FK}. In our proof of Theorem \ref{statement} we will actually show that Kochergin flows with a sufficiently strong singularity have for almost every slope a countable Lebesgue spectrum. As a consequence, neither B.~Host's celebrated theorem that establishes multiple mixing for mixing systems with purely singular spectral type \cite{Ho}, neither \cite{FK}, can give a positive answer to the multiple mixing question for typical Kochergin flows.

Before we proceed to the precise statement of Theorem \ref{statement}, we make some comments about the new phenomenon that is enclosed in the above result and about the mechanisms that yield it.

\subsection*{\bg How chaotic can the lowest-dimensional, smooth, invertible dynamical systems be?} 

A circle diffeomorphism with irrational rotation number that preserves a smooth measure is smoothly conjugate to a rotation. It is hence rigid 
in the sense that the iterates along a subsequence of the integers converge uniformly to identity. Rigidity implies the absence of mixing between any two measurable observables. This absence of mixing actually holds for all smooth circle diffeomorphisms with irrational rotation number since, by Denjoy theory, they are topologically conjugated to rotations. Circle diffeomorphisms with rational rotation number are even farther from mixing, since their non-wandering dynamics are supported on periodic points.

The {\it lowest dimensional setting} that can be  investigated for dynamical complexity after circle diffeomorphisms is that of  multi-valued Hamiltonian flows on surfaces. In the absence of periodic orbits, these flows can be viewed as  reparametrizations of minimal translation flows on the torus. Combining Kolmogorov's result on the linearizability of Diophantine flows, and the theory of periodic approximations, A.~Katok~\cite{katok} proved that sufficiently smooth reparametrizations of linear flows on the torus are actually rigid. In particular, the maximal spectral type of smooth conservative flows of the torus without periodic orbits  is always purely singular.  

\begin{figure}[htb] 
 \centering
  \resizebox{!}{4cm}{\includegraphics[angle=0]{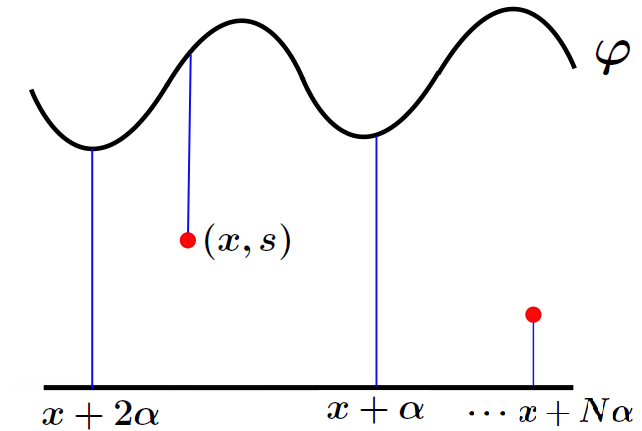}}  
\caption{\small The orbit of a point by the special flow above a rotation of angle $\a$ and under a bounded ceiling function $\varphi$. Smooth reparametrizations of linear flows on $\T^2$ are equivalent to such flows.}
\label{orbits}
\end{figure}

As a consequence of Katok's result, in order to go beyond the purely singular maximal spectral type for smooth conservative flows on the $2$-torus, one must allow the existence of singularities for the flow. {When there is just one singularity, the phase portrait  is actually similar to that of a minimal translation flow, apart from one orbit that contains the saddle point which acts as a stopping point  (see Figure \ref{kochergin}). Our result shows that in this situation the maximal spectral type can indeed be Lebesgue even in this `almost one-dimensional' setting. }
\medskip

\subsection*{\bg Quasi-minimal flows that are spectrally isomorphic to Bernoulli flows}

The two extremes in describing the stochasticity of a dynamical system from a spectral point of view are {simple} pure point spectrum on one end and {countable} Lebesgue spectrum on the other. {Translation flows on the torus have a simple pure point spectrum, while Bernoulli flows have countable Lebesgue spectrum.}

The proof of the countable Lebesgue spectrum property for geodesic flows on negatively curved surfaces by I.~M.~Gelfand and S.~V.~Fomin \cite{GF}, and later for general (open sets of) $K$-flows by A.~N.~Kolmogorov,  Ya.~G. ~Sinai and others, was considered as a major breakthrough
by the Russian school of dynamical systems in the second half of the twentieth century, because of what it implied on the similarities between some deterministic systems and stochastic flows \cite{sinai, Co-Fo-Si,kolmogorov3,pesinDSII, Kheritage}.

Parallel to this discovery was another major discovery made by Kolmogorov that quasi-periodic Diophantine motion is robust in many systems of mechanical origin (such as quasi-integrable Hamiltonian flows). This later developed into what is called today KAM theory (after Kolmogorov, Arnol'd and Moser). With these two phenomena in sight, stemming from the theory of K-systems and from KAM theory, Kolmogorov ventured in his ICM 1954 paper into the following interesting speculation : {\em It is not impossible that only these cases (a discrete spectrum with a finite number of independent frequencies and a countably-multiple Lebesgue
spectrum) are admissible for analytic transitive measures or that, in a sense,
only they alone are general typical cases.} This of course, is in stark contrast with the Halmos-Rokhlin general description of  invariant ergodic measures as being continuous and purely singular for the generic system in the weak topology, and Kolmogorov insisted in his speculation on restricting to the analytic setting, avoiding even the smooth category, for the above dichotomy to have some chances to hold. We know today, that even in the analytic category, and even in low dimensional systems such as reparametrized irrational flows of the $2$-torus, there are many other possibilities for the spectrum, including singular continuous, mixed, etc., but the validity of the dichotomy in some typical sense is still a possibility.

\medskip 
{ For systems with zero entropy,  for many decades progress on spectral questions was restricted to the  case of homogeneous flows, starting with O.~S.~Parasyuk's result \cite{Pa} on countable Lebesque spectrum for horocycle flows (see for example \cite{KlShSt} for a systematic exposition of ergodic theory of homogeneous flows and many references).  The only instance of smooth systems for which Lebesgue spectrum was established beyond hyperbolic and algebraic case is in \cite{FU} where the second author and Ulcigrai show that the maximal spectral type of smooth time-changes of the horocycle flow of a compact hyperbolic surface is Lebesgue.  Their work was motivated by a conjecture of A.~Katok and J.-P.~Thouvenot  that time-changes of horocycle flows should also have countable Lebesgue spectrum (see~\cite{KT},  Conjecture 6.8). Independently, R.~Tiedra \cite{Tie1}, \cite{Tie2} (following a different approach) obtained the absolute continuity of the spectrum for the same flows. }

Note that conservative flows on surfaces always have topological entropy zero\footnote{The situation is completely different for surface diffeomorphisms. Anosov automorphisms of the torus and their relatives constructed by A.~Katok~\cite{K-79} on the sphere and the disc are classical examples of conservative Bernoulli surface diffeomorphisms. Later, Bernoulli diffeomorphisms and flows were shown to exist on any compact manifold of dimension larger than $2$ and $3$ respectively~\cite{Dolgo-Pesin,HPT}. }.
Note also that conservative non-singular time changes of translation flows on the $2$-torus were presented by Kolmogorov~\cite{Kicm} as the basic context of real analytic systems in which discrete spectrum prevails.
In the particular case that we are considering of flows with just one singularity, the phase portrait is, as mentioned above, very similar to that of a minimal translation flow, except for the existence of one rest point. It is a striking fact that some of these quasi-minimal flows, as in Theorem~\ref{statement}, turn out to have a countable Lebesque spectrum and thus are
spectrally equivalent to ergodic Bernoulli flows.

 In the next two subsections we describe some aspects of these two steps as well as their relations to the existing literature. In a third subsection, we cast our results in a more general picture on conservative surface flows and describe their relations to the main recent advances in the field. In the last subsection of this Introduction, before we give the plan of the paper, we explain the shear mechanism that underlies mixing for conservative surface flows with singularities, and we precisely state our results .
 

\subsection*{\bg Non uniform shear and irregular decay of correlations} To prove the absolute continuity of the spectrum of a dynamical system, it is natural to look for a control on the decay of correlations by the flow.  The only result in the direction of getting power-like estimates for the decay of correlations of surface flows was obtained in~\cite{Fa}, where the first author proved a polynomial bound $t^{-\eta}$ on the decay of correlations (as functions of time $t>0$) for Kochergin flows with one power singularity and for the characteristic functions of rectangles. However, in that paper,  the power of the decay $\eta$ is bound to be less than $\frac{1}{4}$, so it is not possible to deduce from the decay anything about the spectral type of the corresponding flow.

However, and as it is often the case, characteristic functions of nice sets do not give the best rate of decay of correlations between observables.
 {Our work takes inspiration from that of \cite{FU}, especially in the use of coboundaries to estimate the decay of correlations, as well as in the proof of the equivalence to Lebesgue of the maximal spectral type. Our approach considerably refines the approach of \cite{FU} in two directions : 1) it handles non-uniformly parabolic flows, for which the correlation decay, even for coboundaries, is very irregular (not even bounded by $t^{-1/2}$), and 2) it gives a criterion for countable multiplicity, which applies to Kochergin flows, but also to a much wider class of mixing systems with square summable decay of correlations for a sufficiently rich class of functions. To prove the square summable decay of correlations for Kochergin flows, we also take inspiration from~\cite{Fa} where a quantitative approach to the mixing shear mechanism exhibited by Kochergin in~\cite{Koc2} is adopted to obtain a speed of mixing for these flows.}

Indeed, there is a 
{fundamental} difference between the decay of correlations for time-changes of horocycle flows and for mixing surface flows, that we will now explain. 

For time-changes of horocycle flows, the decay of correlations for coboundaries exploited in  \cite{FU}  is based on the {\it uniform shear} of geodesic arcs, linear with respect to time, as in B.~Marcus'  proof in \cite{Ma} of mixing for these flows. Such a shear can be readily derived from the commutation relations for the horocycle and the geodesic flows, and the unique ergodicity of the horocycle flow (and hence of all of its time-changes), first established by H.~Furstenberg \cite{Fu} (see also \cite{MaUE}). {The amount of shear is asymptotically linear with respect to time, since it is given by the ergodic integral of a function of non-zero mean}.

In the case of suspension flows above rotations, 
the shear of horizontal arcs is provided by the stretching of the Birkhoff sums of a ceiling function with a singularity (see Figure \ref{FigFlowMix}, {and the last subsection of this introduction for a precise description of the shear mechanism}). {In this case, the amount of shear is non-uniform since it is given by the ergodic integrals of a integrable function of zero mean, hence it depends on deviation of ergodic
integrals from the mean}. 
This {\it non-uniform shear} 
has a strength that depends on the asymptotics of the roof function at the singular point.  It is crucial
for our argument that the singularity be chosen strong enough so that, over most of the phase space, the 
{reciprocal} of the stretching is a square  integrable function of time. This means that our power singularity must be chosen with exponent in the interval
$(1/2, 1)$.  For {\it asymmetric} power singularities, the set where the {reciprocal} 
of the stretching of Birkhoff sums is not sufficiently small, that is,
not square-integrable, has very small measure and can be neglected in the argument. However, such suspension flows cannot be realized as smooth flows on a surface. For {\it symmetric} power singularities of exponent close to $1$, which indeed can be realized as smooth flows (see Remark \ref{rem.singularity} below), the set of insufficient stretching is not negligible anymore, and we have to deal with it in the argument. This is a significant difficulty, both conceptual and technical, and in fact the summability of the correlations even when their decay is not uniform, is a new phenomenon that, to the authors' best knowledge, does not arise in any of the proofs of absolutely continuous spectrum of dynamical systems available in the literature (see  \cite{FU} , \cite{Tie1}, \cite{Tie2}, \cite{Sim}). 

Indeed, we emphasize that in our situation, and in contrast with all the above-mentioned cases, in particular that of time-changes of horocycle flows investigated in  \cite{FU} ,  we have that for any smooth functions,  the correlation coefficients will not always be of order less than $t^{-1/2-\epsilon}$ as $t$ goes to infinity. To the contrary, along the subsequence $t_n$ given by the denominators of the irrational rotation, the correlation coefficients will in fact be as large as  $t_n^{-1/2+\epsilon}$, for some $\eps>0$, because there is a set of measure of order $t_n^{-1/2+\epsilon}$ on which the flow at time $t_n$ is almost equal to the identity. This {\em bad set} appears due to the cancellations in the stretching of the Birkhoff sums of the ceiling function that are caused by the symmetry at the singularity (a remnant of the Denjoy-Koksma property). The bad set is essentially a union of thin towers that follow in projection the orbit of the 
{rotation} on the base. Outside the bad set, the correlations are well controlled due to sufficiently strong uniform stretching. 
A crucial part of our argument, completely absent in the earlier works mentioned above, deals precisely with the bad set. Indeed, we use a bootstrap argument and the regular structure of the bad set, to show that for most of the times that are in a medium scale neighborhood of the time $t_n$, there is some {\it small power} decay of correlations on the bad set. This property, plus the smallness in measure of the bad set, plus the fast decay outside of this set, finally yield square summability of the total correlations (see Figures \ref{sym}, \ref{wykres2} and \ref{wykres1}). 

We think our method will be useful in treating other parabolic flows where mixing is due to shear and where the shear is often sufficiently strong but not uniformly in time and space.

\subsection*{\bg A criterion for countable Lebesgue multiplicity for parabolic flows} 

{Once we know that the maximal spectral type of Kochergin flows is absolutely continuous, two natural questions arise, one about the equivalence of the spectral type to Lebesgue measure on $\R$, and one  about the spectral multiplicity. 

The type and multiplicity of mixing  surface flows was often raised in connection with the question whether there exist flows with {\it simple} Lebesgue spectrum.
This is the flow version of the famous Banach's problem on the existence of a measure preserving transformation having simple Lebesgue spectrum. However, no tools were available to understand the multiplicity question for these flows. {{ A  criterion that gives an {\em upper bound} on the multiplicity of the spectrum of a flow, introduced by Katok and Thouvenot (\cite[Theorem 1.21]{KT}), does not apply to our Kochergin flows due to the strong shear near the singularity}. }

{We introduce here a geometric criterion based on rapid mixing that implies the pure Lebesgue and infinite multiplicity for flows that have an absolutely continuous maximal spectral type. It applies in particular to Kochergin flows with sufficiently degenerate power singularity and allows to complete the proof of Theorem \ref{statement}, building on the absolute continuity of the maximal spectral type, and on the estimates that implied it. 

The criterion, that we call CILS (Criterion for Infinite Lebesgue Spectrum), will be presented  in detail in Section 6.
Heuristically we see that if the flow admits a given number $n+1$ of functions, $n\geq 0$, such that each function is almost orthogonal to the cyclic space of any other one, and such that the spectral measures of the functions can be chosen to be not too small on any fixed bounded measurable set 
of $\R$, then the pure Lebesgue multiplicity of the flow is larger than $n+1$. In fact, in our formulation it is enough to construct $n+1$- functions such that the $(n+1)\times (n+1)$ matrix of Fourier transforms of their square-integrable mutual correlations has maximal rank equal to $n+1$ on any given positive measure subset of the real line.

The idea of constructing an arbitrarily large number of such independent functions for a rapidly mixing system is the following: one can choose  the functions to be supported on one or several  Rokhlin towers for the flow (or flow-boxes with an arbitrarily short base) and specify their values on these towers so that  the conditions of the criterion are satisfied for a finite, arbitrarily large time. Heuristically, such finite systems of functions are constructed to have  orthogonal cyclic subspaces on a large subinterval of the real line. Once more, it is in fact enough to control the Fourier transforms of all correlations of the functions in each finite system over a large time interval. The conditions of the criterion in the infinite complementary intervals are then derived from the mixing estimates, that is, from the square-integrability of the correlations (and their Fourier transforms).

Note that for $n=0$, only the condition on the spectral measure is required and yields the equivalence of the maximal spectral type to Lebesgue. Our criterion in that case reduces to the one used by the second author and Ulcigrai in~\cite{FU}  in the prof that  the maximal spectral type of smooth time-changes of horocycle flows is Lebesgue. Also, the construction of the function satisfying the criterion in that case $n=0$ is very similar to the construction in  \cite{FU}   but has to be adapted to our context of non-uniformly stretching flows.  


The main novelty in our CILS is the lower bound on the multiplicity.  Indeed, our CILS  gives an alternative to the much  stronger $K$-property introduced by Kolmogorov \cite{kolmogorov5}, Sinai \cite{sinai} and others to establish countable Lebesgue spectrum for uniformly hyperbolic systems. It is presented in a clear cut form that makes it applicable to a wide range of smooth mixing systems with a sufficiently fast rate of mixing for observables in some rich class of functions.  

Besides $K$-flows,  infinite Lebesgue spectrum was so far established only for homogeneous flows and other
systems of algebraic origin.
Even in one of the simplest non-algebraic cases, that of smooth time changes of horocycle flows, the {\it countable} Lebesgue spectrum property, conjectured, as we have recalled, by  Katok and Thouvenot (see~\cite{KT}, Conjecture 6.8), was still open. 


Our criterion allows to extend the work of the second author and Ulcigrai~ \cite{FU}  and  thereby complete the proof of the Katok-Thouvenot conjecture. However, the domain of applicability of our criterion is definitely wider than the class of {\em uniformly parabolic} flows (that is, flows with uniformly strong shear) such as horocycle flows and their time changes. Indeed we have applied it in this paper to the borderline case of mixing Kochergin flows, which are {\em non-uniformly parabolic}, with irregular decay of correlations, even for smooth coboundaries. 


{We believe that a systematic application of our CILS will allow to show that countable Lebesgue spectrum is a robust property in many non linear contexts, where many metric invariants (not just one metric invariant, like entropy in the case of K-systems) preclude the possibility of isomorphism classification.




\ignore {
{\orange Alternative 1: Besides serving to show that Kochergin flows have countable Lebesgue spectrum, our criterion also implies that smooth time-changes of horocycle flows have countable Lebesgue spectrum improving on a result of Forni and Ulcigrai  \cite{FU} , who proved that  the maximal spectral type is Lebesgue. }

{\orange Alternative 2 : Besides serving to show that Kochergin flows have countable Lebesgue spectrum, our criterion also applies to many other rapidly mixing parabolic systems. It implies for example that smooth time-changes of horocycle flows have countable Lebesgue spectrum, improving on a result of Forni and Ulcigrai  \cite{FU} , who proved that  the maximal spectral type is Lebesgue. In this paper, we focus on Kochergin flows that in some sense are among the most challenging to study because of .... Other applications of the criterion will be pursued in a separate work. }}

\ignore{
\subsection*{\bg The Banach problem on the existence of a dynamical system with simple Lebesgue spectrum}  
{Do we want to keep this section? i.e. we wanted to say that Kochergin flows are potential candidates for simple Leb, now we know it is different, so maybe we can erase this section.}

The question on the spectral type of mixing surface flows was also often raised in connection with the question whether there exist flows with {\it simple} Lebesgue spectrum.
This is the flow version of the famous Banach's problem on the existence of a measure preserving transformation having simple Lebesgue spectrum.  


A  positive answer to Banach's problem in the context of flows was given by A.~Prikhodko \cite{prikhodko}. However, Prikhodko's constructions are purely measurable, while the examples presented in this paper include 
real analytic flows.   In this paper we establish a criterion (Theorem \ref{thm:CLS} for countable Lebesgue spectrum which can be applied to smooth flows with square-integrable or faster correlations of a dense set of observables (coboundaries), and in particular to our class of Kochergin flows and to smooth time-changes of horocycle flow,  by the estimates on correlations of coboundaries given  \cite{FU}  in the proof of Lebesgue maximal spectral type. 

Observe that mixing reparametrizations of linear flows on $\T^3$ with simple spectrum were obtained in \cite{rankone} and the mixing mechanism there is also due to shear. It would be interesting to investigate their spectral type with the help of the methods developed here. }

\blk

\subsection*{\bg Other recent advances in the study of ergodic properties of surface flows}  



Further advances {in the ergodic theory of} flows on higher genus surfaces came only {in last couple of decades}
as a consequence of a deeper understanding of the behavior of ergodic sums (integrals) of Interval Exchange Transformations (translation flows), and several spectacular developments in that direction also brought renewed interest in  multi-valued Hamiltonian flows on surfaces. 

The second author studied {\em deviations of ergodic averages} for such flows~\cite{forni} and proved a substantial part of the conjectures formulated by M.~Kontsevich ~\cite{Kontsevich} and A.~Zorich \cite{Zorich1}, \cite{Zorich2}, \cite{Zorich3} on their deviation spectrum. {From these results, A.~Avila and the second author \cite{AF} derived the {\em weak mixing} property of non-toral translation flows and of Interval Exchange Transformations which are not rotations.} The proof of the Kontsevich--Zorich conjectures was later completed by A.~Avila and M.~Viana~\cite{AV}.  

Ergodic properties of multi-valued Hamiltonian flows on higher genus surfaces with non-degenerate saddle singularities  were then studied by C.~Ulcigrai, who established that such flows are generically {\em weak mixing}~\cite{CU2}, but {\em not mixing} \cite{CU3} (see also D.~Scheglov's paper \cite{scheglov}). 

For suspensions flows under ceiling functions with  {\it asymmetric logarithmic} singularities, Ulcigrai generalized in her thesis~\cite{CU1} the result of Khanin and Sinai \cite{KS} to suspensions with one singularity above generic IET's.  Only recently, D.~Ravotti \cite{ravotti} has carried out the argument for any number of singularities, thereby establishing mixing, with (at least) logarithmic decay of correlations, for  smooth flows of Arnol'd type on surfaces of higher genus. 

After Ulcigrai's work on {\em mixing properties}, a few major questions remained open in the ergodic theory
of flows on surfaces: whether mixing is at all possible for conservative smooth flows with non-degenerate saddles in higher genus, whether smooth flows on surfaces can have Lebesgue spectrum, and finally whether 
mixing implies multiple mixing (Rokhlin's question).  

As explained above, a better understanding of the various possible behaviors of IET's allowed J.~Chaika and A.~Wright \cite{chaika} to answer the first question in the affirmative. They proved the {\em existence of mixing  special flows} over non-generic, uniquely ergodic, interval exchange transformations,  in the case of a smooth ceiling function with symmetric logarithmic singularities at the interval endpoints.  Their result implies in particular
the existence of an (exceptional) mixing smooth flow with only Morse saddle singularities on a surface of genus $5$.

{Other important advances came from a better understanding of the similarities between the dynamics of uniformly parabolic flows, such as the horocycle flow, and locally Hamiltonian flows on surfaces.
The first and third author proved {\em multiple mixing} {for a class of such flows on the torus \cite{FK} (a restricted atypical class in the case of Kochergin flows, but a typical class for Arnol'd asymmetric flows)}. For this, they showed that these flows display a generalization of the so called {\rm Ratner property}  on slow divergence of nearby orbits (introduced by M.~Ratner \cite{ratner1}, \cite{ratner2}, \cite{ratner3}  in her study of ergodic properties of horocycyle flows), that implies strong restrictions on their joinings, which in turn yield higher order mixing. Multiple mixing was later generalized to many mixing flows on higher genus surfaces in~\cite{KKU}.  This was the first application of the Ratner property to prove multiple mixing outside its original context of horocycle flows.  Finally, a {\em disjointess criterion}, based on the Ratner property, has very recently been introduced  in \cite{KLU}, and systematically applied to disjointness results for time-changes of horocycles and Arnol'd flows (see also~\cite{FoK}~for an application of the criterion to Heisenberg nilflows and \cite{FlaFo} for a refinement of Ratner disjointness result for time-changes of horocycle flows).} As for the question on the spectral type and multiplicity of mixing surface flows, no results were known up to now.


\bigskip 

{In the next subsection, we describe  the mixing mechanism that comes from shear for surface flows with singularities, or for special flows above rotations.
We will also give the precise definition of the Kochergin flows for which we will establish the countable Lebesgue multiplicty. }

\subsection*{\bg Shear of Birkhoff sums and mixing}
Consider a section of a Kochergin's flow with one singularity on the torus, that is transversal to all orbits and does not contain the singularity. The dynamics can then be viewed as that of a special flow above an irrational  rotation of the circle with a return time
function (called a {\it ceiling} or {\it roof} function) having a power-like singularity (see Figure \ref{sym}). 
\begin{figure}[htb]
 \centering
   \resizebox{!}{4cm}{\includegraphics[angle=0]{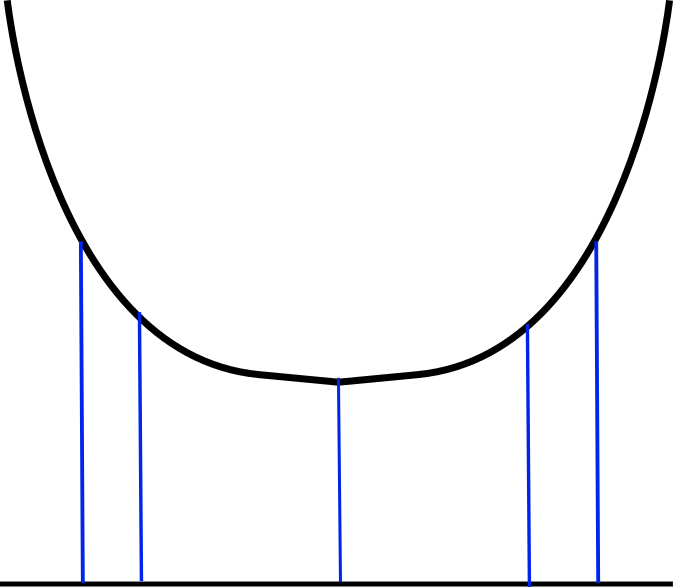}}
\caption{\small  Representation of a $2$-torus flow with one degenerate saddle  as a special flow under a ceiling function  (symmetric) power-like singularity.} 
  \label{sym}
\end{figure}

The singularity is precisely the last point where the section
intersects the incoming separatrix of the fixed point. The strength of the singularity depends on how abruptly the linear flow is slowed down in the neighborhood of the fixed point (see Remark \ref{rem.singularity}). 
 In the case of other surfaces and several singularities, the flows obtained by Kochergin are equivalent to special flows above interval exchange transformations (IET's) with ceiling functions having power-like singularities at the discontinuity points of the IET.

 The mechanism of mixing in Kochergin examples is, in part, the same as in the weak mixing examples of Shklover, namely the 
 stretching of the Birkhoff sums of the ceiling function above the iterates of the ergodic base dynamics. 
Whenever these sums are 
uniformly stretched above small intervals, the image of small rectangles by the special flow for large times decomposes into long and thin strips (see Figure \ref{FigFlowMix}). These strips are well distributed in the fibers due to uniform stretch, and well distributed in projection on the base because of ergodicity of the base dynamics.

\begin{figure}[htb]
 \centering
 \resizebox{!}{3.5cm}{\includegraphics{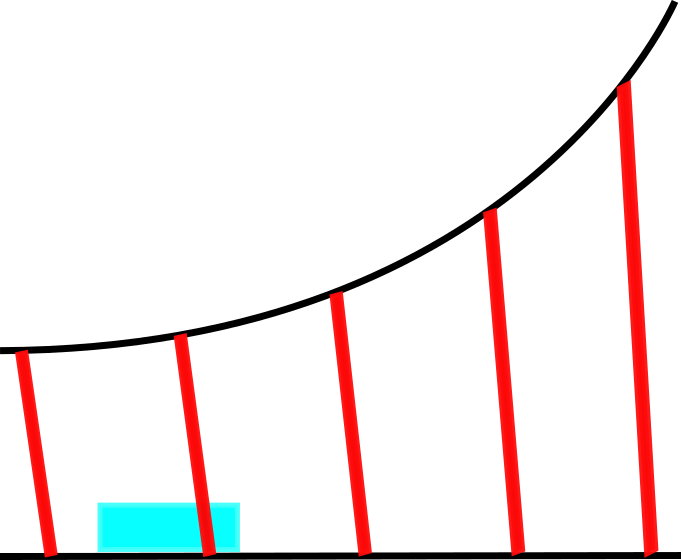}}
\caption{\small Mixing mechanism for special flows: the image of a rectangle is
a union of long narrow strips which fill densely the phase space.} 

\label{FigFlowMix}
\end{figure}

However, the reason behind the uniform stretching is different for Shklover's flows and Kochergin's ones. For the first ones, uniform stretching of the  Birkhoff sums of the ceiling function is due to a Liouville phenomenon of accumulation, along a subsequence of time, of the oscillations of the ceiling function due to periodic approximations. In the case of Kochergin's flows, it is the shear between orbits as they get near the fixed points that is responsible for mixing.
As a consequence, for the latter uniform stretching holds for {\it all} large  times, while for the former, the existence of Denjoy-Koksma (DK for short) times impedes mixing. Denjoy-Koksma times are integers for which the Birkhoff sums have an {\it a priori} bounded oscillation around the mean value on all or on a positive measure proportion of the base (see for example  the discussion around property DK in \cite{DF}).  Hence, a key fact behind Kochergin's result is that the Denjoy-Koksma property does not necessarily hold for ceiling functions having infinite asymptotic values at some singularities.

 A threshold is given by smooth ceiling functions having {\it logarithmic} singularities. When such a singularity is {\it symmetric}, it is known that for a typical irrational rotation a Denjoy-Koksma like property holds that prevents mixing of the special flow (see \cite{Lem1} and \cite[Section 8]{DF}).  In higher genus, Ulcigrai \cite{CU3}
 proved that, despite the presence of polynomial deviations of Birkhoff sums from the mean \cite{Zorich3}, \cite{forni}, for almost all IET's there are still sufficient cancellations to prevent mixing.  A different, special, cancellation mechanism was found slightly earlier by Scheglov \cite{scheglov} in genus $2$.  However,  as proven by Chaika and Wright \cite{chaika}, these cancellations do not happen for all IET's, as the speed of convergence of Birkhoff sums to the mean can be very slow, and this is why mixing is possible in some special cases.

The case of {\it asymmetric logarithmic} singularities is different.  In~\cite{arnold}, Arnol'd showed that multi-valued Hamiltonian flows with non-degenerate saddle points have a phase portrait that decomposes into elliptic islands (topological disks bounded by saddle connections 
and filled up by periodic orbits) and one open uniquely ergodic component. On this component, the flow can be represented as the special flow over an interval exchange map  of the circle and under a ceiling function that is smooth except for some logarithmic singularities. The singularities are typically asymmetric since the coefficient in front of the logarithm is twice as big on one side of the singularity as the one on the other side, due to the existence of homoclinic loops (see Figure  \ref{FigSaddle}).  As we mentioned above, Khanin and Sinai~\cite{KS}
proved that, as conjectured by  Arnol'd, this asymmetry produces mixing.

\begin{figure}[htb]
 \centering
\resizebox{!}{4cm}{\includegraphics{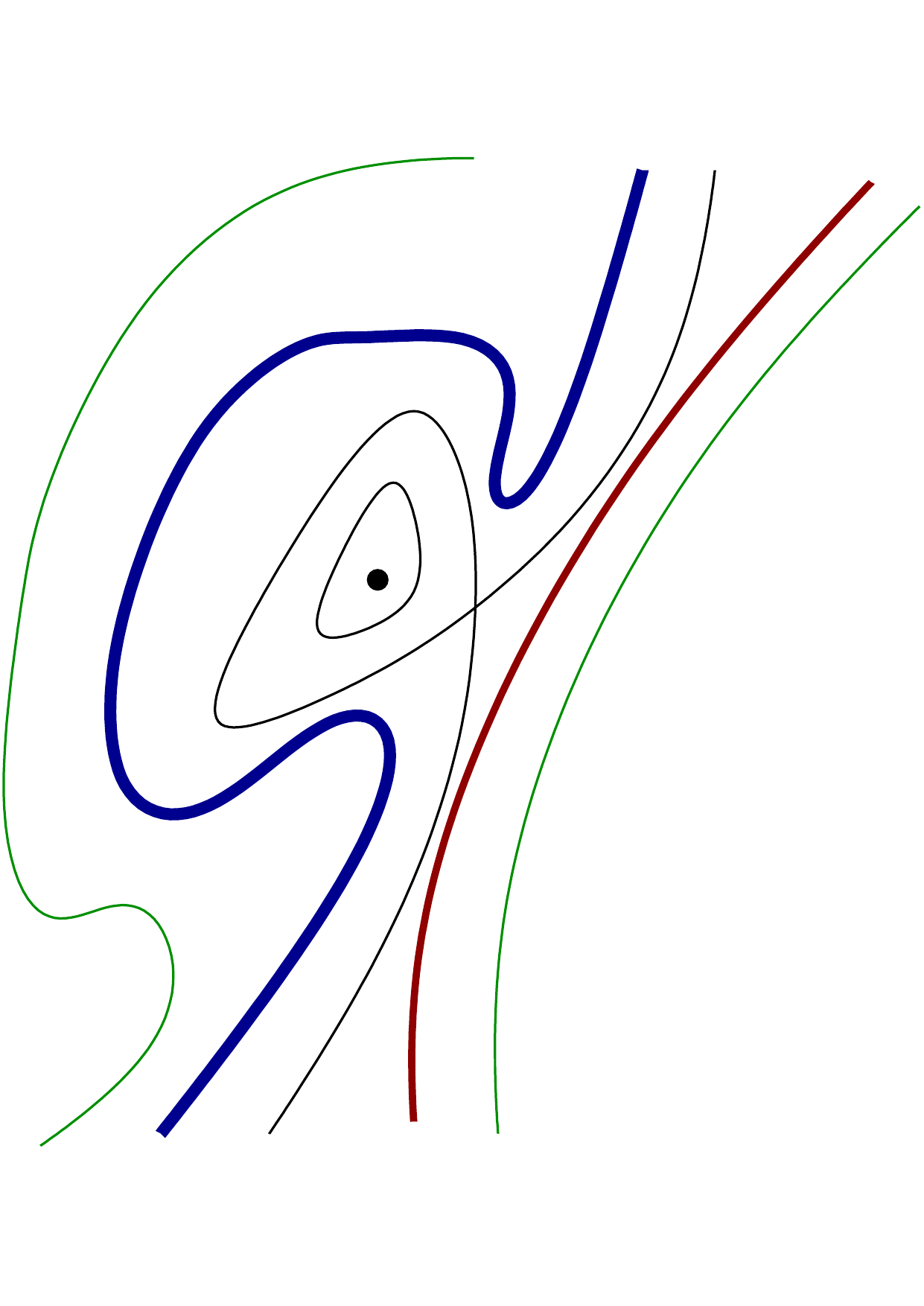}}
  \caption{\small Multivalued Hamiltonian flow. Note that the orbits passing to the left of the saddle 
spend approximately twice longer time comparing 
to the orbits passing to the right of the saddle and starting at the same distance from the separatrix since they pass near the saddle twice.}
 \label{FigSaddle}
\end{figure}

In this paper, we will show that Kochergin flows with a single sufficiently strong degenerate singularity typically have a Lebesgue spectral type with countable multiplicity. We now formulate our results more precisely. The flows which we will consider are special flows given by a base dynamics that is an  irrational rotation by $\a \in \T$, and a ceiling  function $\vphi \in C^2(\T\setminus\{0\})$, $\vphi>0$, with the following properties:
\begin{equation}\label{asu}\lim_{\theta\to 0^+}\frac{\vphi(\theta)}{\theta^{-(1-\eta)}}=M_1\quad \text{   and   }\quad\lim_{\theta\to 0^-}
\frac{\vphi(\theta)}{\theta^{-(1-\eta)}}=M_1
\end{equation}

\begin{equation}\label{asu2}\lim_{\theta \to 0^+}\frac{\vphi'(\theta)}{\theta^{-(2-\eta)}}=-N_1\quad \text{   and   }
\quad \lim_{\theta\to 0^-}\frac{\vphi'(\theta)}{\theta^{-(2-\eta)}}=N_1
\end{equation}

\begin{equation}\label{asu3}\lim_{\theta\to 0^+}\frac{\vphi''(\theta)}{\theta^{-(3-\eta)}}=R_1\quad \text{   and   }
\quad \lim_{\theta\to 0^-}\frac{\vphi''(\theta)}{\theta^{-(3-\eta)}}=R_1
\end{equation}
where $\eta$ is a small number,  $\eta\in(0,\frac{1}{1000})$, and $+\infty> M_1,N_1,R_1>0$. We refer to the beginning of Section \ref{sec.decay} for an exact definition of special flows. We assume that $\int_{\T}\vphi(\theta) d\theta=1$. We let $M=\{(\theta,s)\in\T\times\R\;:\; s\leq \vphi(\theta)\}$ and denote by $\mu$ the measure  equal to the restriction to $M$ of the product of the Haar measures $\lambda_\T$ on the circle $\T$ and $\lambda_\R$ on the real line~$\R$. This measure is the unique invariant measure for the special flow $T^t_{\alpha,\vphi}$ given by $(\alpha, \vphi)$. Our main result is the following. For $\xi>0$, we will say that $\a \in D_{\log,\xi}$ if and only if  there exists a constant $C(\a)>0$ such that for any $p \in \Z, q \in \Z^*$, 
$$ |\a-\frac{p}{q}| \geq \frac{C}{q^{2}\log^{1+\xi}q}.$$ 
It is a classical and easy to prove fact that for any $\xi>0$, $D_{\log,\xi}$ has full Haar measure in $\T$. 

\begin{theorem}\label{main} For $\a \in D_{\log,\xi}$, $\xi<\frac{1}{10}$, the dynamical system $(T^t_{\alpha,\vphi}, M, \mu)$ has Lebesgue spectral type with countable multiplicity. \end{theorem}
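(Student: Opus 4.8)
The plan is to prove the slightly stronger assertion that the maximal spectral type of $(T^t_{\a,\vphi},M,\mu)$ is equivalent to Lebesgue measure on $\R$, by computing the spectral measures $\sigma_f$ of a spectrally generating family of observables and showing that each is absolutely continuous with a density that is bounded above and below by positive constants on arbitrarily large frequency windows. It is enough to treat functions $f(\theta,s)=e^{2\pi ik\theta}h(s)$ with $k\in\Z\setminus\{0\}$ and $h$ a smooth real function supported in a fixed interval $(0,\eps_0)$ with $\eps_0<\min_\T\vphi$, chosen so that $\widehat h>0$ on a prescribed window (with $\eps_0$ and the window allowed to vary); such $f$ lie in $L^2_0(M,\mu)$, and since the forward flow-orbit of $\T\times(0,\eps_0)$ exhausts $M$, they are cyclic for $U_t$ in $L^2_0(M,\mu)$, so $\sigma_{\max}$ is the supremum of the types of the $\sigma_f$ and absolute continuity is inherited.

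First I would write the correlation function using the special-flow structure. Since $T^t_{\a,\vphi}(\theta,s)=(\theta+n\a,\,s+t-\vphi_n(\theta))$, where $\vphi_n=\sum_{j=0}^{n-1}\vphi\circ R_\a^{\,j}$ and $n=n(\theta,s,t)$ is determined by $\vphi_n(\theta)\le s+t<\vphi_{n+1}(\theta)$, the support condition on $h$ makes the return-time indicator automatically satisfied and one gets, for $t\ge 0$,
\[
\langle U_t f,f\rangle=\sum_{n\ge 0}e^{2\pi ikn\a}\int_\T H\big(\vphi_n(\theta)-t\big)\,d\theta,\qquad H(u):=\int_\R h(s-u)\,h(s)\,ds,
\]
where $H$ is smooth, compactly supported, and has $\widehat H=|\widehat h|^2\ge 0$ (and $\widehat H$ even). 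Absolute continuity of $\sigma_f$ is then equivalent to $t\mapsto\langle U_tf,f\rangle$ lying in $L^2(\R)$, which by Plancherel reduces to summability over $n,m\ge 0$ of $\int_{\T^2}G\big(\vphi_n(\theta)-\vphi_m(\theta')\big)\,d\theta\,d\theta'$ for a fixed Schwartz profile $G$ with $\widehat G\ge 0$; moreover, unwinding the Fourier inversion identifies the density of $\sigma_f$ at frequency $\xi$ with $\widehat H(\xi)\sum_{n\ge 0}e^{2\pi ikn\a}\,\overline{I_n(\xi)}$, where $I_n(\xi):=\int_\T e^{2\pi i\xi\vphi_n(\theta)}\,d\theta$ and the $n=0$ term is exactly $\widehat H(\xi)\ge 0$.

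The analytic heart of the argument --- the step I expect to be the main obstacle --- is the estimation of the oscillatory integrals $I_n$, uniformly in $n$, and the summation of the resulting bounds; this is where all of \eqref{asu}--\eqref{asu3}, the smallness of $\eta$, and the Diophantine condition enter, and where one must replace the merely qualitative stretching behind Kochergin's mixing by a quantitative version. Because $\vphi$ has a \emph{symmetric} power singularity of exponent $-(1-\eta)$, on all but a small exceptional set of $\theta$ the sum $\vphi_n'(\theta)=\sum_{j<n}\vphi'(\theta+j\a)$ is dominated by the single term in which $\theta+j\a$ is closest to $0$, of size $\asymp n^{2-\eta}$, and likewise $|\vphi_n''(\theta)|\asymp n^{3-\eta}$ there with a definite sign, by \eqref{asu2}--\eqref{asu3}; one then partitions $\T$ into intervals on which $\vphi_n''$ is sign-definite and bounded below, applies van der Corput's lemma to obtain $|I_n(\xi)|\lesssim\big(|\xi|\,\|\vphi_n''\|\big)^{-1/2}$ there, and controls separately the resonant pieces and the endpoint contributions. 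The hypothesis $\a\in D_{\log,\xi}$ with $\xi<\tfrac1{10}$ is used precisely to bound how closely the $R_\a$-orbit of $0$ approaches the singularity, hence to cap the size of $\vphi_n$ and to bound the number and total measure of resonant times by a power of $n$ with only logarithmic losses, so that the double sum converges; balancing the van der Corput gains against the resonant returns within this Diophantine budget, and understanding the geometry of $\vphi_n$ on the whole circle rather than just near a single return, is the delicate part. The symmetry of the singularity is essential here: an asymmetric logarithmic-type singularity would reintroduce the Arnold--Khanin--Sinai cancellations in the Birkhoff sums and destroy the lower bounds on shearing.

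It remains to upgrade ``absolutely continuous'' to ``equivalent to Lebesgue'', i.e., to rule out the density vanishing on sets of positive measure in arbitrarily large windows. For this I would isolate the leading $n=0$ term, which contributes $\widehat H(\xi)>0$ on the chosen window, and show --- via the same stationary-phase analysis, now retaining the main term of each $I_n$ rather than only its modulus --- that the tail $\sum_{n\ge 1}e^{2\pi ikn\a}\,\overline{I_n(\xi)}$ is negligible compared with $1$ when averaged over $[-\Lambda,\Lambda]$, uniformly as $\Lambda\to\infty$ once $h$ (hence $\widehat H$) is chosen suitably; this amounts to a fast equidistribution of the Birkhoff sums $\vphi_n$ and forces $\sigma_f$ restricted to $[-\Lambda,\Lambda]$ to have density comparable to a positive constant. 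Letting $\Lambda\to\infty$ and combining with the absolute-continuity statement gives that the maximal spectral type of $(T^t_{\a,\vphi},M,\mu)$ is Lebesgue measure on $\R$, which is the claim (and which contains Theorem~\ref{statement} via the identification recalled in the introduction).
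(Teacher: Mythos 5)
Your outline reproduces the general mechanism (shear of Birkhoff sums, reduction of absolute continuity to square-integrability of correlations), but it misses, and in one place inverts, the central difficulty, and the key summation step as described does not close. First, the role of the symmetry is backwards: for a \emph{symmetric} power singularity the contributions to $\vphi_n'$ of the nearest visits to the two sides of $0$ partially cancel (a remnant of Denjoy--Koksma, visible in the opposite signs in \eqref{asu2}), and the set where the stretching drops below the square-integrability threshold $t^{3/2+\eta}$ has measure of order $t^{-1/2+\eps}$ --- it is \emph{not} a small exceptional set controllable ``with logarithmic losses''; it is the asymmetric power case in which that set would be negligible (but such flows are not smoothly realizable on the surface). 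On this bad set the flow at times near the denominators $q_n$ is close to the identity, so the correlation is genuinely as large as $t^{-1/2+\eps}$ at some times, and a pointwise bound of that size over the blocks $[q_n,q_{n+1}]$ is not square-summable; one needs decay \emph{on average in $t$} on the bad set. This is exactly the content of Propositions \ref{prop.badset} and \ref{bn} in the paper: the bad set $\mathcal B_l$ is a union of thin complete towers, and a bootstrap (Cauchy--Schwarz in $t$ combined with the self-correlation bound of Corollary \ref{alleb} applied to the images of the towers, which by $(B_5)$ and Lemma \ref{lemma.independence} are unions of equispaced intervals) yields the averaged estimate. Nothing in your sketch plays this role. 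Moreover, your van der Corput bookkeeping is not sufficient even off the bad set: $\vphi_n$ has $\sim n$ smooth branches, and the second-derivative test gives at best $(|\xi|\,n^{3-\eta})^{-1/2}$ per branch, hence $|I_n(\xi)|\lesssim |\xi|^{-1/2}n^{-(1-\eta)/2}$, which is not summable in $n$ (equivalently, per unit length this is only $t^{-1/2+\eta/2}$, on the wrong side of square-integrability); this is precisely why the paper works with smooth \emph{coboundaries}, whose transfer function buys the extra factor $1/\vphi_n'$ and the decay $t^{-1/2-\eta/6}$ of Proposition \ref{tdec} outside $\mathcal B_l$. Your chosen observables $e^{2\pi i k\theta}h(s)$ do not have this gain, so the claimed convergence of the double sum is unsupported as stated.

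The upgrade from absolute continuity to equivalence with Lebesgue is also missing its key step. You propose to retain the main terms of the stationary-phase expansion of $I_n(\xi)$ and show the tail $\sum_{n\ge1}e^{2\pi i kn\a}\overline{I_n(\xi)}$ is negligible on average over large windows; but under the bounds available the tail is not even absolutely convergent, and no cancellation mechanism in $n$ is offered, so the lower bound on the density is not established. The paper's route is different and softer: if the maximal spectral type were not Lebesgue, there would exist a nonzero $\omega\in L^2(\R)$ orthogonal to all correlation functions (Lemma \ref{lemma:orth_cond}); one then builds functions supported on thin flow-boxes around long orbit segments which realize essentially arbitrary correlation functions on arbitrarily long time intervals (Section \ref{subsec:spectral_type}), and controls the contribution of large times using the quantitative $L^2$ estimates --- this is why the constants are tracked through $\Nzero(f,g)$ and $\None(f,g)$, since the flow-box functions have norms blowing up like powers of $\lambda(\bar J)^{-1}$. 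That argument forces $\omega\equiv 0$. So both halves of your proposal --- the square-integrability of the correlations (on and off the bad set) and the passage to equivalence with Lebesgue --- currently rest on steps that would fail or are not justified.
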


\begin{remark} \label{rem.singularity}  {\rm In \cite{Koc2}, the following method is adopted to obtain conservative flows on the torus with a degenerate saddle-node fixed point as in  \eqref{asu}--\eqref{asu3}. Consider first some Hamiltonian flow on $\R^2$ with the $x-$axis invariant and with a unique singularity at the origin. In the neighborhood of the origin, the orbits of such a flow are as described in Figure \ref{kochergin}. It is then possible to cut a small neighborhood of the origin and paste it smoothly inside the phase portrait of a linear flow of $\T^2$ with any given slope. As a result, one gets a multi-valued Hamiltonian flow that has a unique singularity of saddle-node type.  An easy calculation shows that if we consider the Hamiltonian given by $H_l(x,y)=y(x^2+y^2)^l$ then the corresponding special flow has a unique symmetric power-like singularity as in \eqref{asu}--\eqref{asu3} with $\eta$ arbitrarily close to $0$ as $l \to \infty$. 

One can also obtain analytic examples with one fixed point as in \eqref{asu}--\eqref{asu3}. To do so, one starts with the smooth construction of a multi-valued Hamiltonian described above.  Then, for an arbitrary $k>2l+4$, one considers a real analytic approximation of the smooth multi-valued Hamiltonian that continues to have the same slope and a unique singularity at $(0,0)$, with the same jets of order $k$ at $(0,0)$ (that is, those of $H_l$). From there it follows that the corresponding flow has a special flow representation with a ceiling function having a unique symmetric power-like singularity as in \eqref{asu}--\eqref{asu3}.  
}
\end{remark}

\medskip
We end this introduction with some of the questions that naturally arise from our result.

\begin{question} Do Kochergin flows always have Lebesgue spectral type (with countable multiplicity)?
\end{question}
To answer this question, one has to treat several singularities  and with smaller powers as well as general IET's on the base. 

\begin{question} What is the spectral type in the case of non degenerate saddles?
\end{question}

Arnol'd conjectured a power-like decay of correlation in the asymmetric case, but the decay is more likely to be logarithmic, at least between general regular observables or characteristic functions of regular sets such as balls or squares. Note that even a lower bound on the decay of correlations is not sufficient to preclude absolute continuity of the maximal spectral type. However, an approach based on slowly coalescent periodic approximations as in~\cite{Fa3} may be explored in the aim of proving that the spectrum is purely singular.

\bigskip 

\subsection*{\bg Plan of the paper} 

In Section \ref{sec.decay} we first give the formal definition of our special flows and we  describe the set of coboundary functions we will be interested in. 

The proof that the flow $T^t_{\a,\vphi}$ has an absolutely continuous maximal spectral type follows by a standard argument from Theorem \ref{cob} that states that the Fourier transforms of the spectral measures of  functions in our special dense set are square-integrable.

The proof of Theorem \ref{cob} splits in two parts. We consider a time $t \in [q_n,q_{n+1}]$ for some $n \in \N$. We further consider intervals of time of the type $t \in [l^{21/20},(l+1)^{21/20}]\subset [q_n,q_{n+1}]$.

First, a decay faster than $t^{-1/2-\eps}$ for some $\eps>0$ is established outside a bad set $\mathcal{B}_l$ of measure comparable to $t^{-1/2+\eps}$. This result is stated as Proposition  \ref{tdec}. Second, the squared correlations on the bad set $\mathcal{B}_l$ are controlled on average for $t \in [l^{21/20},(l+1)^{21/20}]$.  This is the content of Proposition \ref{bn}.

Section \ref{est.birk} is devoted to the proof of general stretching estimates for the Birkhoff sums of the ceiling function. 

In Section \ref{sec.partition} the bad set $\mathcal{B}_l$  is  constructed and the stretching properties outside this set are stated. This is the content of Propositions \ref{prop.badset}, \ref{good.dec} and~\ref{pr1}.

Section \ref{sec.correlations} explains the derivation of correlation decay estimates from  uniform stretching of Birkhoff sums. 
The main results of Section \ref{sec.uniformstretchandcorrelations} are Corollary \ref{correlation.good} that describes the fast decay of order  at least $t^{-1/2-\eps}$ on the good intervals that partition the complement of the bad set, and Corollary  \ref{alleb}  that describes the decay of order  $t^{-1/2+\eps}$ on general intervals (with the bad set $\mathcal{B}_l$ included). Corollary \ref{correlation.good} will directly yield the proof of Proposition \ref{tdec} on fast decay outside $\mathcal{B}_l$, given in Section \ref{Proof_tdec}, while Corollary  \ref{alleb} is crucial in the bootstrap argument that yields the averaged decay on the set $\mathcal{B}_l$ of Proposition \ref{bn}, given in Section \ref{Proof_bn}.

Finally, in Section \ref{subsec:CLS}, we complete the proof of Theorem \ref{main} and prove that the spectral type of Kochergin flows is Lebesgue with countable multiplicity. The proof that the spectral type is not just absolutely continuous, but indeed equivalent to the Lebesgue measure, is based on a new criterion for countable Lebesgue spectrum of smooth flows (Theorem \ref{thm:CLS}) and on the construction of an arbitrary number of observables, localized on an arbitrarily long flow-box, which have given arbitrary correlation functions on a finite, but arbitrary long, time interval. The control of the correlation functions beyond this time is guaranteed by the estimates on correlation decay obtained in Sections \ref{sec.partition} and \ref{sec.correlations}. 

The outline of the construction of the observables comes from the proof of the Lebesgue maximal spectral type for time changes of horocycle flows  \cite{FU} .  However, again in contrast with the case of time-changes of horocycle flows, whose phase space has dimension $3$, for this approach to work in the case of surface flows, that is, in dimension $2$, it is crucial that the constant in the estimates on the square integrals of correlations satisfy good bounds in terms of the smooth norms of the functions. For this reason, we will estimate carefully this dependence throughout the paper.

\bg \section{Special flows,  smooth coboundaries, and  decay of correlations} \label{sec.decay}
\blk 
 Let $R_\a:\T\to\T$, $R_\a(\theta)=\theta+\a \; \text{\rm mod }1$, where $\a\in \T$ is an irrational number with the sequence of denominators $(q_n)_{n=1}^{+\infty}$ and let $\psi\in L^1(\T,\mathcal{B},\lambda_\T)$ be a strictly positive function. We denote by $d_\T$ the distance on the circle. We recall that the special flow $T^t:=T^t_{\a,\psi}$ constructed above $R_\a$ and under $\psi$ is given by 
\begin{eqnarray*}
\T \times \R / \sim  &  \rightarrow &  \T \times \R / \sim  \\
 (\theta,s) & \rightarrow & (\theta,s+t), \end{eqnarray*}
where $\sim$ is the
identification 
\begin{equation}
\label{FlowSpace}
(\theta, s + \psi(\theta)) \sim (R_\a(\theta),s) \,.
\end{equation}
Equivalently (see Figure \ref{orbits}), this special flow  is defined for  $t+s \geq 0$ (with a similar definition for negative times) by 
$$T^t(\theta,s) = (\theta+N(\theta,s,t)\a , t+s-   \psi_{N(\theta,s,t)} (\theta)),$$ 
   where $N(\theta,s,t)$ is the unique integer such that 
\begin{equation}
\label{D-C}
0 \leq  t+s-   \psi_{N(\theta,s,t)} (\theta) \leq \psi(\theta+N(\theta,s,t)\a),\end{equation}    
and
$$
\psi_n(\theta)=\left\{\begin{array}{ccc}
\psi(\theta)+\ldots+\psi(R_\alpha^{n-1}\theta) &\mbox{if} & n>0\\
0&\mbox{if}& n=0\\
-(\psi(R_\alpha^n\theta)+\ldots+\psi(R_\alpha^{-1}\theta))&\mbox{if} &n<0.\end{array}\right.$$

Let $M$ denote the configuration space, that is, 
$$M:= \{(\theta,s)\in \T \times \R \;: s \leq \psi(\theta) \}.$$
In our case $\psi=\vphi$, where $\vphi$ has the properties stated in formulas~\eqref{asu}, \eqref{asu2} and~\eqref{asu3}. For a given $\zeta>0$, let us denote
\begin{equation}
\label{Mzeta}
M_\zeta:=\{(\theta,s)\in M\;:\; d_\T(\theta,0) >\zeta, \zeta<s<\vphi(\theta)-\zeta\}.
\end{equation}
We recall that $f$ is a smooth coboundary for the flow $T^t_{\a,\vphi}$ if there exists a smooth function 
$\phi$ such that, for any $a<b$, 
$$\int_a^b f(u,t) dt = \phi(u,b)-\phi(u,a).$$
 The space of smooth coboundaries is dense in the subspace $L^2_0(M) \subset L^2(M)$ of zero average functions, provided $T^t_{\a,\vphi}$ is ergodic (which is always the case if $\a$ is irrational). Moreover, it can be shown that the subspace $\cF$ of the space of all smooth coboundaries defined by the conditions that $f\in \cF$ if and only if $f$ is a smooth coboundary and there exists $\zeta>0$ with $f(x)=0$ for every $x\in M^c_\zeta$, is also dense in $L^2_0(M)$. Indeed, let us prove that the orthogonal space $\cF^\perp \subset L^{2}_0(M)$ contains only the zero function.
 In fact, every function $f\in L^2_0(M)$, which belongs to the orthogonal space $\cF^\perp \subset L^{2}_0(M)$, is by definition orthogonal to the Lie derivative along the flow of every smooth function with support contained in $M_\zeta$ for some $\zeta>0$. It follows that for every $t>0$ the function $f\circ T^t_{\alpha, \vphi} -f$ is orthogonal to all smooth functions with support in $M_\zeta$,
 for every $\zeta>0$,  hence it is orthogonal to all square-integrable functions, as the space of smooth functions with support contained in 
 $M_\zeta$ for some $\zeta>0$ is dense in $L^2(M)$. It follows that for any $t>0$, the function $f\circ T^t_{\alpha, \vphi} -f$ vanishes, hence 
 $f$ is invariant and constant by the ergodicity of the flow. As $f$ has zero average, it is equal to the zero function.  

\smallskip
Let $f\in \cF$ be a smooth coboundary and $g\in C^1(M)$. By definition, since $f\in \cF$, there exists $\zeta>0$ such that $f=0$ on $M^c_\zeta$.

\begin{theorem}\label{cob} Let $f$ be a smooth coboundary  for the flow $(T^t_{\alpha,\vphi})$ and let $g$ be a smooth function on $M$, both vanishing on some neighborhood of the boundary of $M$. Then the correlation function

\begin{equation}\label{inte}
\mathcal C_{f,g}(t) :=  \int_{M}f(T^t_{\alpha,\vphi}(x))g(x)d \mu \,, \quad \text{ for all } t>0 \,,
\end{equation}
belongs to the space $L^2(\R,d \lambda_\R)$ of square-integrable functions on the real line.
\end{theorem}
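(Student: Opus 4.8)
The plan is to show $\int_{0}^{\infty}|\mathcal C_{f,g}(t)|^{2}\,dt<\infty$, which is the content of the theorem since $\mathcal C_{f,g}$ is the function of $t>0$ given in \eqref{inte}. As $f$ vanishes outside $M_{\zeta}$ for some $\zeta>0$ and $f,g$ are bounded, $\mathcal C_{f,g}$ is bounded, so only the tail $\int_{1}^{\infty}|\mathcal C_{f,g}(t)|^{2}\,dt$ needs control. The source of decay is the stretching of the Birkhoff sums of $\varphi$ along $R_{\alpha}$, and the coboundary hypothesis on $f$ is what converts this into square-integrable decay: writing $f=X\phi$ with $X$ the generator of $T^{t}_{\alpha,\varphi}$ and $\phi$ a smooth transfer function, an integration by parts transverse to the flow (carried out in Section \ref{sec.correlations}) represents the correlation as an oscillatory integral whose size is governed by the reciprocal of the amount of stretching accumulated up to time $t$. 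Fixing the singularity exponent $1-\eta\in(1/2,1)$ in \eqref{asu}--\eqref{asu3} is precisely what makes this reciprocal stretching square-integrable in $t$ over the region of $M$ where the stretching is large; a merely smooth $f$, or the characteristic functions of \cite{Fa}, would give only the much weaker rate $t^{-\eta}$.

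For a fixed $n$ I would treat $t\in[q_{n},q_{n+1})$, and inside it the medium-scale windows $I_{l}:=[l^{21/20},(l+1)^{21/20})\subset[q_{n},q_{n+1})$, of length $\asymp l^{1/20}$. To each $I_{l}$ one attaches the bad set $\mathcal B_{l}\subset M$ built in Section \ref{sec.partition} (Proposition \ref{prop.badset}): a union of thin towers shadowing in projection an orbit of $R_{\alpha}$, of measure $\mu(\mathcal B_{l})\lesssim t^{-1/2+\eps}$, and such that on its complement the Birkhoff sums of $\varphi$ are uniformly stretched along a partition into ``good'' intervals. For $t\in I_{l}$ I split
$$
\mathcal C_{f,g}(t)=\Big(\int_{M\setminus\mathcal B_{l}}+\int_{\mathcal B_{l}}\Big)f\big(T^{t}_{\alpha,\varphi}x\big)\,g(x)\,d\mu(x)=:\mathcal C^{\mathrm g}(t)+\mathcal C^{\mathrm b}(t),
$$
and estimate the two pieces by unrelated means. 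For the good piece, Proposition \ref{tdec} — which follows, via Corollary \ref{correlation.good}, from the uniform stretching on the good intervals — gives $|\mathcal C^{\mathrm g}(t)|\le C(f,g)\,t^{-1/2-\eps}$ for some $\eps>0$ and all large $t$, whence $\int_{1}^{\infty}|\mathcal C^{\mathrm g}|^{2}\,dt\le C(f,g)^{2}\int_{1}^{\infty}t^{-1-2\eps}\,dt<\infty$. Throughout I would keep explicit track of how $C(f,g)$ depends on the smooth norms of $f$ and $g$: this is not needed for Theorem \ref{cob} itself, but is used later to upgrade absolute continuity to equivalence with Lebesgue measure.

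The bad piece is the main obstacle and the genuinely new feature of the argument. No pointwise decay of $\mathcal C^{\mathrm b}$ can hold: at $t\approx q_{n}$ a part of $\mathcal B_{l}$ is a near-identity set for $T^{t}$ — a remnant of the Denjoy--Koksma property forced by the symmetry of the singularity — on which $\mathcal C^{\mathrm b}$ can be of size $q_{n}^{-1/2+\eps}$, which is not square-summable over $n$. Even the pointwise bound $|\mathcal C^{\mathrm b}(t)|\lesssim t^{-1/2+\eps}$ of Corollary \ref{alleb} only gives $\int_{I_{l}}|\mathcal C^{\mathrm b}|^{2}\,dt\lesssim l^{1/20}\cdot(l^{21/20})^{-1+2\eps}=l^{-1+O(\eps)}$, which barely fails to be summable in $l$. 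The missing decay is recovered by Proposition \ref{bn}: a bootstrap over the window $I_{l}$, feeding Corollary \ref{alleb} back through the regular tower structure of $\mathcal B_{l}$, produces extra decay of $\mathcal C^{\mathrm b}$ outside a negligible exceptional subset of $I_{l}$, enough to yield $\int_{I_{l}}|\mathcal C^{\mathrm b}(t)|^{2}\,dt\le C(f,g)^{2}\,l^{-1-\delta}$ for some $\delta>0$.

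Summing over $l\ge1$ (each $I_{l}$ lies in a unique block $[q_{n},q_{n+1})$, the finitely many boundary windows being covered by the trivial bound) gives $\int_{1}^{\infty}|\mathcal C^{\mathrm b}|^{2}\,dt<\infty$, and combining with the good piece, $\int_{0}^{\infty}|\mathcal C_{f,g}|^{2}\,dt\le\int_{0}^{1}|\mathcal C_{f,g}|^{2}+2\int_{1}^{\infty}|\mathcal C^{\mathrm g}|^{2}+2\int_{1}^{\infty}|\mathcal C^{\mathrm b}|^{2}<\infty$, which is the assertion. The hardest and most novel step is Proposition \ref{bn} — the averaged control, over medium-scale time windows, of the correlations carried by the near-identity towers of the bad set — resting in turn on the uniform-stretching estimates for Birkhoff sums developed in Sections \ref{est.birk}--\ref{sec.uniformstretchandcorrelations}.
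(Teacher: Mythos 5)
Your proposal is correct and follows essentially the same route as the paper: Theorem \ref{cob} is reduced to the windowed estimate of Theorem \ref{decl} over the intervals $[l^{21/20},(l+1)^{21/20}]$, which is obtained by combining the pointwise bound of Proposition \ref{tdec} off the bad set $\mathcal B_l$ with the averaged bound of Proposition \ref{bn} on $\mathcal B_l$ (upgraded to an $L^2$ bound via the trivial pointwise bound $\lesssim \mu(\mathcal B_l)$), and then summing over $l$. The only superfluous point is your remark about ``boundary windows'': since Propositions \ref{tdec} and \ref{bn} hold for every $l$, with $n$ determined by $q_n<l^{21/20}<q_{n+1}$ and $t\leq l_1\leq q_{n+2}$, no window needs to be contained in $[q_n,q_{n+1})$ or handled by a trivial bound (which, by itself, would not be summable).
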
 

The symbols $C_{f,g}$, $C'_{f,g}$, $C''_{f,g}$ will denote 
positive constants depending only on the $C^1$ norms of $f\in \cF$ and $g\in C^1(M)$ and on the $C^1$ norm of the transfer function $\phi$ for $f\in \cF$. Theorem \ref{cob} immediately follows from 
\begin{theorem}\label{decl} 
For every $f\in \cF$ and $g\in C^1_0(M_\zeta)$ there exists a constant $C_{f,g}>0$ such for all $l\in \N$,  we have
$$
\int_{l^{21/20}}^{(l+1)^{21/20}}\left\vert
\int_Mf(T^t_{\a,\vphi}(x))g(x)d\mu\right\vert^2 dt<  C_{f,g} \,l^{-1-\frac{\eta}{100}}.
$$
\end{theorem}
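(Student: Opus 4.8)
\textbf{Proof proposal for Theorem \ref{decl}.}

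The plan is to reduce the bound on the time-integral of squared correlations over the window $[l^{21/20},(l+1)^{21/20}]$ to two separate estimates: a pointwise-in-time fast decay estimate valid off a bad set, and an averaged-in-time estimate on the bad set. Fix $l\in\N$ and let $n$ be such that $[l^{21/20},(l+1)^{21/20}]\subset[q_n,q_{n+1}]$. For each such $t$, the strategy is to decompose $M_\zeta$ (the support of $g$) into a ``good'' region, where the Birkhoff sums of $\vphi$ are uniformly stretched by the flow up to time $t$, and the ``bad'' set $\mathcal B_l$, which by Proposition \ref{prop.badset} has measure $O(t^{-1/2+\eps})$ and consists of a controlled union of thin towers shadowing the base rotation orbit. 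On the good region, I would invoke Corollary \ref{correlation.good} (equivalently Proposition \ref{tdec}), which gives
$$\left|\int_{M\setminus\mathcal B_l} f(T^t_{\a,\vphi}x)g(x)\,d\mu\right|\le C_{f,g}\, t^{-1/2-\eps}$$
for every $t$ in the window; squaring and integrating $dt$ over an interval of length $\asymp l^{1/20}$ around $l^{21/20}$ contributes at most $C_{f,g}\, l^{1/20}\cdot (l^{21/20})^{-1-2\eps} = C_{f,g}\, l^{-1-21\eps/10}$, which is comfortably below the target $l^{-1-\eta/100}$ once $\eps$ is chosen in terms of $\eta$ (recall $\eta<1/1000$, so taking $\eps$ a fixed multiple of $\eta$ works).

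The heart of the argument is the contribution of the bad set. Here a pointwise bound of the form $t^{-1/2+\eps}$ — which is the best one can hope for at the ``resonant'' times $t\approx q_n$ where the flow is nearly the identity on a set of measure $t^{-1/2+\eps}$ — is \emph{not} square-summable, so I cannot argue pointwise. Instead I would use Proposition \ref{bn}: the square of the correlation restricted to $\mathcal B_l$, \emph{averaged} over $t\in[l^{21/20},(l+1)^{21/20}]$, is $O(l^{-1-\eta/100})$. The mechanism behind Proposition \ref{bn} is a bootstrap/orthogonality argument exploiting the rigid geometry of $\mathcal B_l$: since $\mathcal B_l$ is a union of thin towers following the base orbit, and $f$ is a coboundary with transfer function $\phi$ (so $\int_a^b f(u,\tau)d\tau = \phi(u,b)-\phi(u,a)$ is uniformly bounded), the correlation on $\mathcal B_l$ can be re-expressed, via integration along flow orbits, in terms of $\phi$ evaluated at the endpoints of these orbit segments; the $C^1$ control on $\phi$ together with Corollary \ref{alleb} (the $t^{-1/2+\eps}$ decay on general intervals including $\mathcal B_l$) then feeds into an averaging over the medium-scale window $[l^{21/20},(l+1)^{21/20}]$, in which the towers of $\mathcal B_l$ shift relative to the cross-section and the near-identity phenomenon can occur only on a sub-window of controlled length. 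Summing the good-region and bad-set contributions gives the stated bound.

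I expect the main obstacle to be precisely the bad-set estimate, i.e. justifying Proposition \ref{bn}: unlike the time-changes of horocycle flows in \cite{FU}, here there is no fast-mixing normalizing action and the set of insufficient stretching for symmetric power singularities of exponent near $1$ is genuinely non-negligible. Making the bootstrap work requires (i) a sufficiently explicit description of $\mathcal B_l$ as thin towers shadowing the rotation orbit (from Section \ref{sec.partition}), (ii) the general stretching estimates for Birkhoff sums of $\vphi$ from Section \ref{est.birk} to control how these towers evolve, and (iii) careful tracking of the dependence of all constants on the $C^1$ norms of $f$, $g$, and $\phi$, since — as emphasized in the introduction — in dimension two the final construction of functions with prescribed correlations (Section \ref{subsec:spectral_type}) needs those constants to be polynomially controlled. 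The choice of the exponent $21/20$ for the medium scale is the balance point: large enough that the window contains many ``resonance-free'' times and the averaging gains a factor, small enough that $t$ does not vary enough within the window to spoil the uniform-stretching estimates on the good region.
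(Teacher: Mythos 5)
Your proposal takes essentially the same route as the paper: Proposition \ref{tdec} (equivalently Corollary \ref{correlation.good}) gives the pointwise $t^{-1/2-\eps}$ decay off $\mathcal B_l$, which you square and integrate over the window of length $\asymp l^{1/20}$, while the bad-set contribution is handled by the time-averaged estimate of Proposition \ref{bn}. The only point to make explicit is that Proposition \ref{bn} bounds the time average of the absolute (not squared) correlation on $\mathcal B_l$; the squared average you invoke follows from it via the trivial pointwise bound $\left|\int_{\mathcal B_l} f(T^t_{\a,\vphi}x)g(x)\,d\mu\right| \le \Vert f\Vert_0\Vert g\Vert_0\,\mu(\mathcal B_l)$ together with $\mu(\mathcal B_l)\le q_n^{-1/2+6\eta}$ and the Diophantine comparison of $q_n$ with $l_0$, which is exactly how the paper completes the argument.
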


For simplicity, we denote $l_0=l^{21/20}$, $l_1=(l+1)^{21/20}$. Let $n\in \N$ be unique such that 
$$q_n<l_0<q_{n+1}\,.$$

 Theorem \ref{decl} can be derived from the propositions stated below.  
 
 \begin{proposition}\label{tdec} There exists a set $\mathcal B_l\subset M$, $\mu(\mathcal B_l)<q_n^{-1/2+6\eta}$ such that for every $t\in [l_0,l_1]$, we have
$$
\left|\int_{M\setminus \mathcal B_l}f(T^t_{\a,\vphi}(x))g(x)d\mu\right|<            C_{f,g}  \,t^{-1/2-\frac{\eta}{6}}.
$$
\end{proposition}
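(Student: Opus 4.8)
\textbf{Plan of the proof of Proposition \ref{tdec}.}

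The overall strategy is to use the fact that $f$ is a smooth coboundary to rewrite the correlation $\mathcal{C}_{f,g}(t)$ in a form where the decay is governed by the shearing (stretching) of the Birkhoff sums $\vphi_N(\theta)$ of the ceiling function along the orbit of the base rotation. If $f = \frac{d}{dt}\phi\circ T^t|_{t=0}$ with transfer function $\phi$, then $\int_M f(T^t_{\a,\vphi}(x)) g(x)\,d\mu = \int_M \bigl(\phi(T^t_{\a,\vphi}x) - \phi(x)\bigr)\, (-\mathcal{L}_X g)(x)\, d\mu$ after integration by parts along the flow direction (where $X$ is the generating vector field), up to boundary terms that vanish because $f,g$ vanish near $\partial M$. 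More usefully, one disintegrates the integral over $M$ into integrals along flow segments and uses the coboundary structure to replace a time integral of $f$ by a difference of $\phi$ at the endpoints; this is exactly the mechanism exploited in \cite{FU} and set up in Section \ref{sec.correlations} of this excerpt. The decay then comes down to showing that, for $x$ outside the bad set, the ``return combinatorics'' $N(\theta,s,t)$ organizes the fiber into long thin strips along which the phase $s \mapsto s + t - \vphi_{N}(\theta)$ varies rapidly, so that oscillatory cancellation in $\phi$ produces a gain of $t^{-1/2-\eta/6}$.

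Concretely, I would proceed as follows. First, fix $t\in[l_0,l_1]$ and recall from Section \ref{sec.partition} (Propositions \ref{prop.badset}, \ref{good.dec}, \ref{pr1}) the construction of $\mathcal{B}_l$: it is a union of thin towers shadowing the orbit of $R_\a$ near the singularity, and $\mu(\mathcal B_l) < q_n^{-1/2+6\eta}$ is part of that construction. Second, on $M\setminus\mathcal B_l$ invoke the uniform stretching estimates for $\vphi_N$: for most base points $\theta$, the derivative in $\theta$ of the Birkhoff sum $\vphi_{N(\theta,s,t)}(\theta)$ (equivalently the shear of nearby orbits) is bounded below by a quantity of order $t^{1+\eta}$ or so — this is where the strength of the singularity (exponent $1-\eta$ close to $1$, so the ``derivative singularity'' exponent is $2-\eta$) enters and gives a power of $t$ strictly bigger than $t^{1/2}$ needed for square-integrability. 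Third, partition the complement of $\mathcal B_l$ into the ``good intervals'' on which this stretching is genuinely uniform, apply Corollary \ref{correlation.good} of Section \ref{sec.uniformstretchandcorrelations} (the derivation of correlation decay from uniform stretching) on each such interval, and sum the resulting bounds. The smooth norms of $f$, $g$, and $\phi$ enter only through the constant $C_{f,g}$, and one must track that they enter polynomially (not worse), as flagged in the introduction — but for this proposition that bookkeeping is routine.

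The technical heart, and the step I expect to be the main obstacle, is establishing the lower bound on the stretching of $\vphi_N$ uniformly over $M\setminus\mathcal B_l$ with the \emph{right} power of $t$ and with the bad set genuinely as small as $q_n^{-1/2+6\eta}$. Because the singularity is symmetric (a remnant of the Denjoy--Koksma property), there are cancellations in $\vphi_N$ on a set of base points of positive, not negligible, measure — precisely the set defining $\mathcal B_l$ — and one has to show that once these are excised, the remaining Birkhoff sums are stretched by at least $\sim t^{1+c\eta}$ on intervals long enough to carry the oscillatory-integral argument. This requires the careful Denjoy--Koksma-type analysis of Section \ref{est.birk}, controlling how many of the first $N\approx t$ iterates $\theta + j\a$ fall into shrinking neighborhoods of $0$ of each dyadic scale, and using $\a\in D_{\log,\xi}$ with $\xi<1/10$ to rule out the pathological accumulation of small denominators that would destroy the stretch. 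Once the stretching lower bound and the combinatorial structure of the strips are in hand, the passage to the correlation bound via integration by parts in the fiber variable (Corollary \ref{correlation.good}) is comparatively mechanical. A secondary subtlety is handling the orbit segments of $T^t_{\a,\vphi}$ that straddle the singularity — the piece of phase space near the incoming separatrix — but these are exactly absorbed into $\mathcal B_l$ by design, so outside $\mathcal B_l$ one only ever sees controlled pieces of the orbit.
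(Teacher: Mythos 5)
Your overall route is the paper's: take $\mathcal B_l$ and its measure bound $(B_2)$ from Proposition \ref{prop.badset}, split each partition interval $I\subset W$ as $J_1\sqcup J_2\sqcup I_{bad}$ with $J_1,J_2$ good (Proposition \ref{good.dec}), apply Corollary \ref{correlation.good} on the good pieces, and sum by Fubini, using that $f$ and $g$ vanish off $M_\zeta\subset W$ to discard everything outside $W$. This is exactly how the paper proves Proposition \ref{tdec} (via the slightly more general Proposition \ref{tdecbis}), so as a plan it coincides with the paper's proof.

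However, the step you single out as the technical heart is misquantified, and pursued literally it would not close. You propose to show that outside $\mathcal B_l$ the stretching $|\vphi'_{N(x,t)}(\bar x)|$ is bounded below by something of order $t^{1+c\eta}$. That is not enough. In Proposition \ref{rem.cse2} the coboundary mechanism leaves, on each partition interval $\bar I\in\cI_k$ (of length about $1/q_k\approx 1/(q_n\log^{15}q_n)$), boundary terms of size $\Nzero(f,g)/r^t_I$ which do not telescope from one partition interval to the next; after summing over the roughly $q_k$ intervals per horizontal level one needs $1/r^t_I\lesssim \lambda(\bar I)\,t^{-1/2-\eps}$, i.e.\ $r^t_I\gtrsim q_n^{3/2+\eta}$, or else the alternative ratio condition $S^t_J\geq t^{1/2+2\eps}$ of \eqref{verygood}. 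This is precisely why the bad towers in Section \ref{been} are defined by the threshold $2q_n^{3/2+\eta}$ and why \eqref{good} and $(B_4)$ carry the scale $q_n^{3/2+\eta}$ rather than $t^{1+c\eta}$: with your threshold the summed boundary terms would only be $O(t^{-c\eta})$, far from $t^{-1/2-\eta/6}$. (Your plan is rescued only because you also invoke Corollary \ref{correlation.good}, whose hypotheses already encode the correct scales.) Relatedly, $\mathcal B_l$ is not the set of orbit segments that straddle the singularity: base points passing very close to $0$ acquire enormous stretching and are good, while the neighbourhood of the singularity and the upper parts of the towers lie outside $W$ and contribute nothing since $g$ vanishes off $M_\zeta$; the bad set consists of the complete towers on which $|\vphi'_{N}|$ is anomalously small because the contributions from the two sides of the symmetric singularity cancel. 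A last minor point: your integration-by-parts identity is off by the constant $\int_M f g\,d\mu$, but you do not use it afterwards.
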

\begin{proposition}\label{bn}
We have 
$$
\int_{l_0}^{l_1}\left|\int_{\mathcal B_l}f(T^t_{\a,\vphi}(x))g(x)d\mu\right|dt< C_{f,g}\, \frac{(l_1-l_0)\mu(\mathcal B_l)}{q_n^{20\eta}}.
$$
\end{proposition}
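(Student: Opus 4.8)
The plan is to prove Proposition~\ref{bn} by a bootstrap argument that estimates the average over $t\in[l_0,l_1]$ of the correlation restricted to the bad set $\mathcal B_l$, rather than each individual $t$. The starting point is Corollary~\ref{alleb}, which already gives a pointwise bound of order $t^{-1/2+\epsilon}$ for the correlation on a general interval (with the bad set included); applied naively this gives $\int_{l_0}^{l_1}|\int_{\mathcal B_l}\dots|\,dt\lesssim (l_1-l_0)\,q_n^{-1/2+6\eta}$, which is roughly $(l_1-l_0)\mu(\mathcal B_l)$ but \emph{without} the crucial gain $q_n^{-20\eta}$. So the real content is to extract that extra polynomial saving. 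First I would recall the structure of $\mathcal B_l$ from Proposition~\ref{prop.badset}: it is a union of thin towers that, in projection to the base $\T$, follow the orbit of the rotation $R_\a$, i.e.\ $\mathcal B_l$ is supported over a union of short arcs $I_j$ around points $j\a$, $0\le j< q_n$ (roughly), each of width comparable to $q_n^{-1}$ times a small power, and of controlled height in the fiber direction.

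The key mechanism I would exploit is that on the bad set the flow at time near $t_n\approx q_n$ is close to the identity only because of the Denjoy--Koksma-type cancellation at the symmetric singularity, and this near-identity behaviour is \emph{unstable} as $t$ moves away from the critical time: for $t$ in a medium-scale window around $t_n$, the Birkhoff sums of $\vphi'$ (the derivative of the ceiling) acquire enough stretching even over the bad towers. Concretely, for most base points in the arcs $I_j$, the quantity controlling shear, $\vphi'_{N(\theta,s,t)}(\theta)$, is small only for $t$ in a subinterval of $[l_0,l_1]$ of relative length $q_n^{-c\eta}$; outside that subinterval Corollary~\ref{correlation.good}-type estimates (fast decay $t^{-1/2-\eta/6}$) apply even on (most of) $\mathcal B_l$. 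Thus the plan is: (i) partition $\mathcal B_l$ into the sub-towers sitting over each arc $I_j$; (ii) for each $j$, use the regularity of the return-time cocycle and the Diophantine condition $\a\in D_{\log,\xi}$ to show that the set of $t\in[l_0,l_1]$ for which the stretching over $I_j$ is insufficient has measure $\lesssim (l_1-l_0)\,q_n^{-20\eta}$; (iii) on the complementary good set of times apply the decay-of-correlations machinery of Section~\ref{sec.uniformstretchandcorrelations} to bound the integrand by $C_{f,g}\,t^{-1/2-\eta/6}\lesssim C_{f,g}\,\mu(\mathcal B_l)\,\cdot(\text{small})$; (iv) on the bad set of times use the trivial bound $|\int_{\mathcal B_l}f(T^t x)g\,d\mu|\le \|f\|_\infty\|g\|_\infty\,\mu(\mathcal B_l)\le C_{f,g}\mu(\mathcal B_l)$ and multiply by the measure $\lesssim (l_1-l_0)q_n^{-20\eta}$ of that set of times. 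Summing (iii) and (iv) gives the claimed bound $C_{f,g}(l_1-l_0)\mu(\mathcal B_l)q_n^{-20\eta}$.

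A few technical points I would need to handle carefully. The coboundary structure of $f$ should be used: writing $f = \frac{d}{dt}\big|_0 \phi\circ T^t$, an integration by parts in $t$ inside $\mathcal C_{f,g}(t)$ converts the correlation into $\int_M \phi(T^t x)g(x)d\mu$ minus boundary terms, which is what makes even the "trivial" bound on the bad set of times affordable with the right constant $C_{f,g}$ depending only on $\|\phi\|_{C^1}$, $\|g\|_{C^1}$ — this is also why the statement's constant is of the announced form. I would also need the counting of how many towers $I_j$ meet a fixed fiber, and the fact that the widths of the $I_j$ are (up to the rotation's Denjoy--Koksma control) comparable, so that $\mu(\mathcal B_l)$ genuinely factors as (number of towers)$\times$(width)$\times$(height) and the per-tower estimates reassemble additively. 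Finally, the medium-scale window $[l^{21/20},(l+1)^{21/20}]$ is chosen precisely so that within it $t$ does not cross more than one "critical" near-identity time associated to a given denominator, so that the bad-times set is a single interval of controlled length and does not recur; I would verify this using $l_1-l_0 \asymp l^{1/20}\asymp t^{1/21}$, which is small compared to $q_{n+1}-q_n$ under $\a\in D_{\log,\xi}$, $\xi<1/10$.

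The main obstacle, as the authors themselves flag in the introduction, is step (ii): showing that the set of times $t$ for which the stretching of the Birkhoff sums of $\vphi'$ over the bad towers is insufficient has measure only a small polynomial fraction $q_n^{-20\eta}$ of $[l_0,l_1]$. This is the point where the symmetry of the singularity fights back — the same cancellation that creates $\mathcal B_l$ in the first place could, a priori, persist over a long range of $t$ — and it requires a quantitative, non-uniform shear estimate (the bootstrap using Corollary~\ref{alleb} re-injected into the structure of $\mathcal B_l$) rather than the clean uniform-shear argument available for horocycle time-changes in \cite{FU}. Everything else — the partition of $\mathcal B_l$, the measure bookkeeping, the integration by parts exploiting the coboundary, and the invocation of Corollary~\ref{correlation.good} on the good set of times — is, by comparison, routine given the results already stated in Sections~\ref{sec.partition}--\ref{sec.uniformstretchandcorrelations}.
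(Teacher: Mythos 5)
Your overall architecture (split the times in $[l_0,l_1]$ into a small exceptional set where you use the trivial bound $\|f\|_0\|g\|_0\mu(\mathcal B_l)$, and a complementary set where stretching recovers and Corollary~\ref{correlation.good}-type decay holds on $\mathcal B_l$) hinges on your step (ii), and that step is not just unproven --- the claim it needs is false. The window $[l_0,l_1]$ has length $l_1-l_0\lesssim q_n^{1/10}$, so for $x$ in a bad tower the cocycle identity together with Lemma~\ref{fi} (estimate \eqref{con:ele}) shows that $\vphi'_{N(x,t)}(\bar x)$ can change by at most roughly $q_n^{(1/10)(2+2\eta)}\ll q_n^{3/2}$ as $t$ ranges over the whole window. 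Hence a tower that is understretched at the time $t_i$ used in its construction (where $|\vphi'_{N(x_i,t_i)}(\bar x_i)|\le 2q_n^{3/2+\eta}$) remains understretched for \emph{every} $t\in[l_0,l_1]$; there is no subinterval of ``bad times'' of relative measure $q_n^{-20\eta}$ outside of which fast decay holds on $\mathcal B_l$. This persistence is precisely what property $(B_4)$ of Proposition~\ref{prop.badset} encodes (the point of small derivative stays within $q_n^{-3/2+4\eta}$ of the fixed center $x_{bad}$ for all $t$ in the window), and it is why the set $\mathcal B_l$ is defined once for the whole window rather than time by time. Your auxiliary remarks (integration by parts in $t$ using the coboundary, the window crossing only one ``critical time'') also do not reflect the actual mechanism: the coboundary structure enters only through the transfer function in Lemma~\ref{prop.bI} and hence in the constants $\Nzero,\None$, not through an integration by parts in the correlation.

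What the paper does instead, and what your sketch is missing, is an argument that never tries to regain pointwise-in-$t$ decay on $\mathcal B_l$: it bounds the \emph{average} by exploiting decorrelation in $t$ of the integrand on the (time-independent) bad set. Concretely, for each tower $U_i$ (Proposition~\ref{towdec}) one inserts a sign function $\rho(t)=\pm 1$ to remove the absolute value, applies Cauchy--Schwarz in the space variable, and expands the square of $\int_{l_0}^{l_1}\rho(t)f(T^t_{\a,\vphi}x)\,dt$ into a double time integral. The near-diagonal part $|r-t|\le (l_1-l_0)^{1/2}$ is controlled trivially, and the off-diagonal part reduces to self-correlations $\int_{T^{-t}_{\a,\vphi}(U_i)}f\cdot f\circ T^{r-t}_{\a,\vphi}\,d\mu$ at medium time gaps $t^*=r-t\in[q_n^{1/41},q_n^{1/19}]$. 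These are estimated by the bootstrap: property $(B_5)$ says $T^{-t}_{\a,\vphi}(U_i)$ is, up to small measure, a complete tower $\cT_{t,i}$; Lemma~\ref{lemma.independence} shows its levels are $(q_n^{-1/4},q_n^{-1/100})$-uniformly distributed inside the partition intervals of $\cI_{k^*}$ at the scale of $t^*$; Lemma~\ref{lemma.extrapol} then replaces the integral over $\cT_{t,i}\cap I$ by the proportional integral over all of $I$; and finally Corollary~\ref{alleb} is applied \emph{at the time gap} $t^*$ (not at $t$) to gain the factor $q_n^{-c}$. So the saving $q_n^{-20\eta}$ comes from Cauchy--Schwarz plus decorrelation over the time window, not from shrinking a set of bad times --- the latter cannot be shrunk.
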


The proofs of the two above propositions will be given later, in Sections~\ref{Proof_tdec} and~\ref{Proof_bn}, respectively. Let us show how they imply Theorem \ref{decl}, and therefore Theorem \ref{cob} and  the first part of Theorem \ref{main} on the absolute continuity of the spectrum.
\begin{proof}[Proof of Theorem \ref{decl}.]
{
Let $F(t):=\frac{1}{\mathcal B_l}\left|\int_{\mathcal B_l}f(T^t_{\a,\vphi}(x))g(x)d\mu\right|$ and let $G_l:=\{t\in [l_0,l_1]\;:\; F(t)\geq \frac{1}{q_n^{7\eta}}\}$. By Markov's inequality and  Proposition \ref{bn},  we have
$$
\left|G_l\right|\leq C_{f,g} \frac{l_1-l_0}{q_n^{13\eta}}.
$$
By splitting the integration below into $G_l$ and $G_l^c$, we get} 
\begin{multline*}\int_{l_0}^{l_1}\left\vert \int_{\mathcal B_l}f(T^t_{\a,\vphi}(x))g(x)d\mu \right\vert^2d t\leq C_{f,g}  \frac{(l_1-l_0)\mu(\mathcal B_l)^2}{q_n^{13\eta}}\\
\leq 
C_{f,g} \frac{(l_1-l_0)}{q_n^{1+\eta}}\leq  C'_{f,g} \frac{(l_1-l_0)}{q_{n+1}^{1+\eta/2}}\leq  C'_{f,g} \frac{(l_1-l_0)}{l_0^{1+\eta/2}} \\ \leq 
2C'_{f,g}\frac{l^{1/20}}{l^{21/20(1+\eta/2)}}< C''_{f,g} l^{-1-\frac{\eta}{2}}.
\end{multline*}

Using this and Proposition \ref{tdec}, we have 
\begin{multline*}
\int_{l^{21/20}}^{(l+1)^{21/20}}\left\vert
\int_Mf(T^t_{\a,\vphi}(x))g(x)d\mu\right\vert^2 dt \leq
 2 \int_{l_0}^{l_1}t^{-1-\frac{\eta}{5}} dt+\\
2 \int_{l_0}^{l_1}\left\vert\int_{\mathcal B_l}f(T^t_{\a,\vphi}(x))g(x)d\mu\right \vert^2d t\leq l^{-1-\frac{\eta}{10}},
\end{multline*}
which finishes the proof of Theorem \ref{decl}.
\end{proof}

\bg 
\section{Stretching of Birkhoff sums}
\label{est.birk} 
\blk

We collect in the section the necessary technical facts about the Birkhoff sums of the ceiling function $\vphi$ above $R_\a$. Some proofs 
that are not difficult, but probably a bit tedious, will be deferred to the Appendix \ref{app.birkhoff}. 

\smallskip
For simplicity, we will assume that in our main assumptions \eqref{asu}, \eqref{asu2}, \eqref{asu3} we have $M_1,N_1,R_1=1$ and that 
$\int_\T \vphi d \lambda_\T=1$. 
Throughout this section we suppose fixed $l_0=l^{21/20}$, $l_1=(l+1)^{21/20}$ and the unique integer $n$ such that $q_n<l_0<q_{n+1}$.

For every $x\in M$ we will  denote by $\bar x\in \T$ its first coordinate.  In particular, for any $t\in \R$, we will denote the first coordinate of 
$T^t_{\a,\vphi}(x) \in M$ by  $\bar T^t_{\a,\vphi}(x)$. Similarly, for any horizontal interval $I \subset M$, we will denote $\bar I \subset \T$ its vertical projection and by $\lambda (I)$ its (horizontal) Lebesgue measure, that
is, the Lebesgue measure $\lambda_\T( \bar I)$.

 Let $q_k\in [q_n\log ^{15} q_n, q_n\log^{20} q_n]$ (such $q_k$ exists by the Diophantine assumptions on $\a$) and consider the partition $\cI_k$ of $\T$ into intervals with endpoints $\{-i\a\}_{i=0}^{q_k-1}$. For any $\bar I \in \cI_k$ such that  $\bar I\cap [-\frac{1}{q_n^{3/5}},\frac{1}{q_n^{3/5}}]=\emptyset$, let $I_\vphi:=\{(\theta,s)\in M\,:\; \theta \in \bar I, 0\leq s\leq \min_{\theta\in \bar{I}}\vphi(\theta)\}$. Define 
\begin{equation}\label{defw}W:= \bigcup \{I_\vphi\;:\; \bar I \in \cI_k, \bar I\cap \left[-\frac{1}{q_n^{3/5}},\frac{1}{q_n^{3/5}}\right]=\emptyset\}.
\end{equation}
By a slight abuse of notations, we refer to $W$ as a set as well as a partial partition of $M$ into intervals. 
Define moreover
\begin{equation}\label{def:v}
V:=\{(\theta,s) \in M \;:\; 0\leq s \leq q_n^{3/5+1/10}\}.
\end{equation}

Notice that $M_\zeta\subset W$. 

 Notice that since $t\leq l_1\leq q_{n+2}$ and $\vphi>c>0$, we have
 $$
 N_t:=\sup_{x \in M}N(x,t)\leq \frac{q_{n+2}}{c}\ll q_k.
 $$ 
 Hence by the definition of the partition $\cI_k$, for every $I \subset W$
\begin{equation}\label{dif}
0\notin \bigcup_{i=0}^{N_t} R_\alpha^i(\bar I).
\end{equation}
As a consequence of \eqref{dif} the Birkhoff sum $\vphi_{N(x,t)}$ is (twice) differentiable on $I$, for every $x\in I$ and $t\leq l_1$. This fact will be used repeatedly in the proofs.

\subsection{Denjoy-Koksma estimates}

We start with some Denjoy-Koksma type estimates that allow us to give some control on the Birkhoff sums of $\vphi$ in function of the closest visit to the singularity.  

We will  adopt the following notation: for any $x\in M$ and $N\in \N$, we let
$$
x_{min}^N=\min_{0\leq j<N}d(\bar x+j\a,0).
$$

\begin{lemma}\label{koksi} For every $x\in M$ and every $ N\in [q_r,q_{r+1}]$,  we have 
\begin{equation}\label{koks0}
\vphi\left(x^N_{min}\right)+\frac{1}{3}q_{r}\leq \vphi_N(\bar x) \leq \vphi\left(x^N_{min}\right)+3q_{r+1}
\end{equation} 
\begin{equation}\label{koks1}
\vphi'\left(x^N_{min}\right)-8q_{r+1}^{2-\eta}<
|\vphi'_N(\bar x)|<\vphi'\left(x^N_{min}\right)+8q_{r+1}^{2-\eta}
\end{equation}
and
\begin{equation}\label{koks2}
\vphi''\left(x^N_{min}\right)\leq \vphi''_N(\bar x)<
\vphi''\left(x^N_{min}\right)+8q_{r+1}^{3-\eta}\,.
\end{equation}
\end{lemma}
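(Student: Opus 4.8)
The plan is to exploit the Denjoy–Koksma property for the rotation $R_\a$ together with the precise power-like asymptotics \eqref{asu}, \eqref{asu2}, \eqref{asu3} for $\vphi$, $\vphi'$, $\vphi''$. Recall that for $N\in[q_r,q_{r+1}]$ one can write $N=\sum_{i} a_i q_i$ in the Ostrowski expansion, so the Birkhoff sum $\vphi_N$ decomposes into at most $O(1)$ blocks of length $q_i$ ($i\le r$), plus the fact that over each full block of length $q_i$ the orbit $\{\bar x+j\a\}$ visits each of the $q_i$ intervals of the partition $\cI_i$ exactly once. The one visit that comes closest to $0$ contributes the dominant, possibly huge, term; I would isolate it and estimate everything else by comparison with $\int_\T\vphi\,d\lambda=1$ (resp. $\int_\T\vphi'=0$, $\int_\T\vphi''$).

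First, for \eqref{koks0}: let $j_0$ be the index realizing $x^N_{min}=d(\bar x+j_0\a,0)$. Then $\vphi(\bar x+j_0\a)$ is comparable to $\vphi(x^N_{min})$ up to the bounded variation of $\vphi$ away from $0$. For the remaining $N-1$ terms, I would group them into $O(1)$ Denjoy–Koksma blocks: over a block of length $q_i$, the sum of $\vphi$ over the points \emph{other than the one nearest $0$} differs from $q_i\int_\T\vphi\,d\lambda=q_i$ by at most the total variation of $\vphi$ on the complement of a neighbourhood of $0$, which is $O(q_i^{1-\eta})=o(q_i)$; summing the $O(1)$ blocks and using $q_i\le q_{r+1}$ gives the upper bound $\le \vphi(x^N_{min})+3q_{r+1}$, and using that at least one full block of length $\geq q_r$ is present (since $N\ge q_r$) together with $\vphi>0$ gives the lower bound $\geq \vphi(x^N_{min})+\tfrac13 q_r$. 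The constants $3$ and $\tfrac13$ are generous enough to absorb the bounded-variation errors and the rounding in the Ostrowski decomposition.

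For \eqref{koks1} and \eqref{koks2} the structure is identical, with $\vphi$ replaced by $\vphi'$ and $\vphi''$. The key points are: (i) $\vphi'$ has mean zero, so over a full Denjoy–Koksma block the contribution of the non-singular points is $O(q_i^{2-\eta})$ (the variation of $\vphi'$ away from $0$ being $O(\theta^{-(2-\eta)})$-controlled), which after summing $O(1)$ blocks is bounded by $8q_{r+1}^{2-\eta}$; (ii) the single nearest visit contributes $\vphi'(\bar x+j_0\a)$, which by \eqref{asu2} is, up to lower order and sign, $\vphi'(x^N_{min})$ in absolute value — hence the two-sided bound on $|\vphi'_N(\bar x)|$. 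For $\vphi''$, since by \eqref{asu3} $\vphi''\ge 0$ near $0$ on both sides, the nearest-visit term is nonnegative and the lower bound $\vphi''(x^N_{min})\le\vphi''_N(\bar x)$ follows once we check the remaining terms do not make things negative — this uses that $\vphi''$ is eventually positive near the singularity while its negative part away from $0$ is bounded, and the block estimate gives the discrepancy $O(q_{r+1}^{3-\eta})$ for the upper bound. I would relegate the careful bookkeeping of the Ostrowski blocks and the variation estimates to Appendix \ref{app.birkhoff}, as the text announces.

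The main obstacle is purely combinatorial/bookkeeping: making precise that a Birkhoff sum of length $N\in[q_r,q_{r+1}]$ splits into $O(1)$ genuine Denjoy–Koksma blocks so that exactly one term — the globally nearest visit to $0$ — escapes the mean-value control, and that the ``second nearest'' visit in each block is already at distance $\gtrsim 1/q_{r+1}$ from $0$ so that its contribution is subsumed in the $q_{r+1}$, $q_{r+1}^{2-\eta}$, $q_{r+1}^{3-\eta}$ error terms rather than competing with $\vphi(x^N_{min})$. Once this separation is set up cleanly, the three inequalities follow by the same argument applied to $\vphi,\vphi',\vphi''$ using \eqref{asu}--\eqref{asu3} and the integral normalisations.
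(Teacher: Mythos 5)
Your overall strategy (isolate the closest visit to the singularity and control the rest by a Denjoy--Koksma comparison with the mean) is the right one, but the decomposition you build it on has a genuine gap. A time $N\in[q_r,q_{r+1}]$ does \emph{not} split into $O(1)$ Ostrowski blocks: the number of blocks is $\sum_j a_j$, which is unbounded for general $\a$ and, even under the standing assumption $\a\in D_{\log,\xi}$, is only polylogarithmic in $q_{r+1}$ (the paper itself only has $a_j\leq C^{-1}\log^{1+\xi}q_j$, see the proof of Lemma \ref{koksma2}). Each block produces its \emph{own} escaping term (its own nearest visit, invisible to Denjoy--Koksma), so your scheme yields one uncontrolled term per block, not ``exactly one term overall''. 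For \eqref{koks0} this is survivable, since every non-global escaper is at distance at least $\|q_r\a\|/2\gtrsim 1/q_{r+1}$ from $0$ and hence costs $O(q_{r+1}^{1-\eta})$, which times polylogarithmically many blocks is still $o(q_{r+1})$; but for \eqref{koks1} and \eqref{koks2} each escaper costs a constant times $q_{r+1}^{2-\eta}$, resp.\ $q_{r+1}^{3-\eta}$, so (number of blocks)$\times$(escaper cost) overshoots the stated bounds $8q_{r+1}^{2-\eta}$, $8q_{r+1}^{3-\eta}$ by a polylog factor. To recover them you would have to abandon the per-block accounting and sum globally over all points other than the closest one, using that their distances to $0$ are bounded below by successive multiples of $\|q_r\a\|$, so that $\sum_k (k/q_{r+1})^{-(2-\eta)}$ converges. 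This is precisely the ``bookkeeping'' you defer, and as you formulate it ($O(1)$ blocks, a single escaping term) it is false rather than merely tedious. Two smaller points: $\vphi'$ is not integrable on $\T$, so ``$\vphi'$ has mean zero'' is not meaningful — what is true is that the truncated derivative has integral of order $q^{1-\eta}$ (boundary terms), which is what makes the $O(q_i^{2-\eta})$ block estimate work; and for the lower bound in \eqref{koks2}, boundedness of the negative part of $\vphi''$ away from $0$ only gives an error of order $N$, not $0$, so your argument as stated does not deliver $\vphi''(x^N_{min})\leq \vphi''_N(\bar x)$ without an extra input.

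The paper's proof sidesteps all of this. Since $\vphi>0$, one has the sandwich $\vphi_{q_r}(\bar x)\leq \vphi_N(\bar x)\leq \vphi_{q_{r+1}}(\bar x)$, so it suffices to apply the classical Denjoy--Koksma inequality \emph{once}, at the exact denominator time $q_{r+1}$ for the upper bound and $q_r$ for the lower bound, to the function $\vphi$ truncated on $[-\tfrac{1}{3q_{r+1}},\tfrac{1}{3q_{r+1}}]$ (resp.\ $[-\tfrac{1}{3q_r},\tfrac{1}{3q_r}]$), whose variation is $O(q^{1-\eta})$; at most one orbit point falls in the truncation window and is absorbed into the term $\vphi(x^N_{min})$. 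No Ostrowski expansion, no block count, and no Diophantine input are needed, and the constants $3$ and $\tfrac13$ come out directly; \eqref{koks1} and \eqref{koks2} are then handled analogously with the truncated $\vphi'$ and $\vphi''$. If you want to keep a block decomposition, you must either import this monotonicity trick or carry out the global sorted-distance summation described above.
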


\begin{proof}[Proof of Lemma \ref{koksi}] We will give the proof of \eqref{koks0}, the proofs of \eqref{koks1} and \eqref{koks2} are analogous. Let $\chi_{r}$ denote the characteristic
function of the interval $[-\frac{1}{3q_{r}},\frac{1}{3q_{r}}]$ and define $\bar{\vphi}_r:=(1-\chi_{r})\vphi$. By Denjoy-Koksma inequality, since $\int_\T\vphi d \lambda_\T=1$, we have
$$
\left(\bar{\vphi}_{r+1}\right)_{q_{r+1}}(\bar x)\leq q_{r+1}\int_\T\bar{\vphi}_{r+1}d\lambda_\T+ 4q_{r+1}^{1-\eta}\leq 3q_{r+1}.
$$
Therefore
$$
\vphi_N(\bar x)\leq \vphi_{q_{r+1}}(\bar x)\leq \vphi(x^N_{min})+\left(\bar{\vphi}_{r+1}\right)_{q_{r+1}}(\bar x)\leq \vphi(x^N_{min})+3q_{r+1}.
$$
This gives the upper bound. Analogously (by Denjoy-Koksma inequality for $\bar{\vphi}_r$), we get the lower bound. The proof is thus finished.
\end{proof}

The following lemma is a direct consequence of \eqref{koks0} and \eqref{koks1}, \eqref{koks2}.
\begin{lemma}\label{fi} For every $x\in M$ and $N\in \N$
\begin{equation}\label{ele}
|\vphi'_N(\bar x)|<(\vphi_N(\bar x))^{2+2\eta},
\end{equation}
\begin{equation}\label{ele2}
|\vphi''_N(\bar x)|>(\vphi_N(\bar x))^{3-\eta}\log^{-3}N
\end{equation}
As a consequence, we have that for every $x\in  M \cap (\T \times\{s\})$ and every $t\in \R$
\begin{equation}\label{con:ele}
|\vphi'_{N( x,t)}(\bar x)|<3s^{2+2\eta}+3t^{2+2\eta}
\end{equation}
and
\begin{equation}\label{con:ele2}
|\vphi''_{N( x,t)}(\bar x)|>(t+s-\vphi(\bar x+N( x,t)\a))^{3-\eta}\log^{-3}N( x,t).
\end{equation}
\end{lemma}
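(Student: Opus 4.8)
The plan is to deduce each inequality from the corresponding estimate in Lemma \ref{koksi} by isolating the "closest-approach" term $\vphi(x^N_{min})$, which dominates all the Birkhoff sums when the orbit comes near the singularity, and handling separately the "away from singularity" regime where the Denjoy--Koksma error terms $q_r, q_{r+1}^{2-\eta}, q_{r+1}^{3-\eta}$ dominate. Throughout I will use the asymptotics \eqref{asu}--\eqref{asu3}, which (with $M_1=N_1=R_1=1$) say that near $\theta=0$ we have $\vphi(\theta)\asymp |\theta|^{-(1-\eta)}$, $|\vphi'(\theta)|\asymp |\theta|^{-(2-\eta)}$, $|\vphi''(\theta)|\asymp |\theta|^{-(3-\eta)}$; in particular, writing $d=x^N_{min}$, one has relations such as $|\vphi'(d)|\asymp \vphi(d)^{(2-\eta)/(1-\eta)}$ and $|\vphi''(d)|\asymp \vphi(d)^{(3-\eta)/(1-\eta)}$, and since $\tfrac{2-\eta}{1-\eta}<2+2\eta$ and $\tfrac{3-\eta}{1-\eta}>3-\eta$ for $\eta$ small, these powers are comparable (up to constants and logarithmic factors) to the target exponents $2+2\eta$ and $3-\eta$ in \eqref{ele}, \eqref{ele2}.

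For \eqref{ele}: fix $N\in[q_r,q_{r+1}]$ and set $d=x^N_{min}$. If $d$ is small enough that $\vphi(d)\geq q_{r+1}^{2}$, say, then by \eqref{koks0} $\vphi_N(\bar x)\geq \vphi(d)$, while by \eqref{koks1} $|\vphi'_N(\bar x)|\leq \vphi'(d)+8q_{r+1}^{2-\eta}\leq 2\vphi'(d)\asymp \vphi(d)^{(2-\eta)/(1-\eta)}$, which is at most $\vphi(d)^{2+2\eta}\leq \vphi_N(\bar x)^{2+2\eta}$ for $\eta$ small (absorbing the constant into the gap between the exponents, using $\vphi(d)$ large). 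In the complementary regime $\vphi(d)< q_{r+1}^2$, the Denjoy--Koksma bounds give $\vphi_N(\bar x)\geq \tfrac13 q_r$ and $|\vphi'_N(\bar x)|\leq \vphi'(d)+8q_{r+1}^{2-\eta}\lesssim q_{r+1}^{2-\eta}+ (q_{r+1}^2)^{(2-\eta)/(1-\eta)}$; one then checks, using $q_{r+1}\leq q_r^{1+o(1)}$ along the Diophantine subsequence together with $\vphi_N(\bar x)\gtrsim q_r$, that this is $\leq \vphi_N(\bar x)^{2+2\eta}$. (If necessary one also uses the trivial lower bound $\vphi_N(\bar x)\geq N\min\vphi \gtrsim q_r$.) The inequality \eqref{ele2} is proved symmetrically from \eqref{koks2} and \eqref{koks0}: the main point is $\vphi''(d)\asymp \vphi(d)^{(3-\eta)/(1-\eta)}\geq \vphi(d)^{3-\eta}\log^{-3}N$ in the near regime, while in the far regime $\vphi''_N(\bar x)\geq \vphi''(d)\geq c>0$ (the $\vphi''$ Birkhoff sum is bounded below by the sum away from the singularity, which is $\asymp 1$), and $\vphi_N(\bar x)^{3-\eta}\log^{-3}N$ is then controlled because $\vphi_N(\bar x)\lesssim q_{r+1}$ in that regime and the $\log^{-3}N$ factor beats the polynomial loss; again $q_{r+1}=q_r^{1+o(1)}$ is what makes the logarithmic factor sufficient.

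Finally, the consequences \eqref{con:ele} and \eqref{con:ele2} follow by substituting $N=N(x,t)$ and using the defining inequality \eqref{D-C} of the special flow, which gives $\vphi_{N(x,t)}(\bar x)\leq t+s$ and $t+s-\vphi_{N(x,t)}(\bar x)=t+s-\vphi(\bar x+N(x,t)\a)\cdot 0 +\dots$ — more precisely \eqref{D-C} yields $t+s-\vphi_{N(x,t)}(\bar x)\in[0,\vphi(\bar x+N(x,t)\a)]$, so $\vphi_{N(x,t)}(\bar x)\leq t+s\leq 2(s+t)$; plugging $\vphi_N(\bar x)\leq 2s+2t$ into \eqref{ele} gives $|\vphi'_{N(x,t)}(\bar x)|\leq (2s+2t)^{2+2\eta}\leq 3s^{2+2\eta}+3t^{2+2\eta}$ after adjusting constants for small $\eta$, which is \eqref{con:ele}. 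For \eqref{con:ele2}, \eqref{ele2} gives $|\vphi''_{N(x,t)}(\bar x)|\geq \vphi_{N(x,t)}(\bar x)^{3-\eta}\log^{-3}N(x,t)$, and \eqref{D-C} lets us write $\vphi_{N(x,t)}(\bar x)\geq t+s-\vphi(\bar x+N(x,t)\a)$, yielding the claim directly.

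The main obstacle I expect is bookkeeping the constants and exponents in the "far from the singularity" regime: one must verify that the polynomial factors $q_{r+1}$ versus $q_r$ (which differ by a factor that is subpolynomial, but only because $\a\in D_{\log,\xi}$ — here $q_{r+1}\leq C q_r \log^{1+\xi}q_r$) are absorbed by the gap between exponents and by the $\log^{-3}N$ slack, and that the additive Denjoy--Koksma errors do not spoil the multiplicative comparison. This is routine but requires care precisely because the target exponents $2+2\eta$ and $3-\eta$ are only slightly off from the "natural" exponents $\tfrac{2-\eta}{1-\eta}$ and $\tfrac{3-\eta}{1-\eta}$ coming from the asymptotics.
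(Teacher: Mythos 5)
The overall strategy (read everything off Lemma \ref{koksi}, splitting according to whether the closest approach $d=x^N_{min}$ dominates) is indeed what the paper intends — it offers no written proof and calls the lemma a direct consequence of \eqref{koks0}--\eqref{koks2} — but your execution of the ``far from the singularity'' regime is wrong in two places. The serious one is \eqref{ele2}: when $\vphi(d)\lesssim q_{r+1}$ you only claim $\vphi''_N(\bar x)\geq c>0$ and then assert that ``the $\log^{-3}N$ factor beats the polynomial loss''. It cannot: in that regime the right-hand side of \eqref{ele2} can be of size $q_{r+1}^{3-\eta}\log^{-3}N$, which is polynomially large, and no logarithmic factor compensates it. The missing ingredient is quantitative: for $N\geq q_r$ the orbit $\{\bar x+j\a\}_{j<N}$ is $O(1/q_r)$-dense (three-distance/denominator structure), so $x^N_{min}\leq c/q_r$ and hence, by \eqref{asu3} together with \eqref{koks2}, $\vphi''_N(\bar x)\gtrsim q_r^{3-\eta}$; combining this with $\vphi_N(\bar x)\leq \vphi(d)+3q_{r+1}$ and the Diophantine bound $q_{r+1}\leq C^{-1}q_r\log^{1+\xi}q_r$ is what reduces the far regime to a comparison of powers of $\log q_r$, which the $\log^{-3}N$ slack is there to absorb (and even then the bookkeeping is tight: with only the single-closest-point lower bound one gets $\log^{-(1+\xi)(3-\eta)}$, so one either argues slightly more finely near times $N$ with several close returns, or notes that any fixed power of $\log$ suffices for the way \eqref{con:ele2} is used later). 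Your parenthetical justification ``the $\vphi''$ Birkhoff sum is bounded below by the sum away from the singularity, which is $\asymp 1$'' is also unjustified, since $\vphi''$ has no sign away from $0$.

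The second, smaller, flaw is in your regime $\vphi(d)<q_{r+1}^2$ for \eqref{ele}: you bound $\vphi'(d)\lesssim (q_{r+1}^2)^{(2-\eta)/(1-\eta)}\approx q_{r+1}^{4+2\eta}$ and compare it only against $\vphi_N(\bar x)^{2+2\eta}\gtrsim q_r^{2+2\eta}$, which is false by a polynomial margin. The case split on the size of $\vphi(d)$ versus $q_{r+1}^2$ is unnecessary and is what loses the coupling: simply use $\vphi_N(\bar x)\geq \vphi(d)$ from \eqref{koks0} to get $\vphi'(d)\lesssim \vphi(d)^{(2-\eta)/(1-\eta)}\leq \vphi_N(\bar x)^{(2-\eta)/(1-\eta)}$, and handle the additive error $8q_{r+1}^{2-\eta}$ via $\vphi_N(\bar x)\geq \tfrac13 q_r$ and $q_{r+1}\leq C^{-1}q_r\log^{1+\xi}q_r$, the exponent gap $2+2\eta>(2-\eta)/(1-\eta)$, $2+2\eta>2-\eta$ absorbing constants and logs for $q_r$ large. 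Finally, in \eqref{con:ele} the gratuitous replacement $t+s\leq 2(s+t)$ ruins the constant: $(2s+2t)^{2+2\eta}\leq 2^{3+4\eta}(s^{2+2\eta}+t^{2+2\eta})$, and $2^{3+4\eta}>3$; using $\vphi_{N(x,t)}(\bar x)\leq t+s$ directly with convexity gives the stated bound.
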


We have also the following bound on the discrepancies of the base rotation relative to intervals. 

\begin{lemma}\label{koksma2} Let $\bar J\subset \T$ be an interval. Then for every $N\in \N$ and every $\theta \in \T$
$$
|(\chi_{\bar J})_N(\theta)- N\lambda (J)|\leq 2C^{-1} \log^{2+\xi} N\,.
$$
\end{lemma}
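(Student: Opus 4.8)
\textbf{Proof proposal for Lemma \ref{koksma2}.}

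The plan is to reduce the bound on the discrepancy of the rotation relative to an arbitrary interval $\bar J$ to the classical three-distance / continued fraction estimate, using the Diophantine condition $\a\in D_{\log,\xi}$ to control how the denominators $q_m$ grow. First I would recall the standard fact that for the rotation $R_\a$ the discrepancy function $D_N(\theta):=(\chi_{\bar J})_N(\theta)-N\lambda(\bar J)$ is uniformly (in $\theta$ and in $\bar J$) controlled by $\sum_{m : q_m\le N} 1$ up to a universal multiplicative constant: indeed, writing $N$ in the Ostrowski representation $N=\sum_{m\le M} a_m q_m$ with $0\le a_m\le q_{m+1}/q_m$ and $M$ such that $q_M\le N<q_{M+1}$, one splits the orbit segment $\{\theta,\theta+\a,\dots,\theta+(N-1)\a\}$ into blocks of lengths $q_m$; by Denjoy--Koksma applied to the (bounded variation, here indicator) function $\chi_{\bar J}$, each block of length $q_m$ contributes an error at most $\mathrm{Var}(\chi_{\bar J})=2$, and there are $\sum_{m\le M} a_m$ such blocks. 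The crude bound $a_m\le q_{m+1}/q_m$ is too lossy; the sharper and standard statement is $|D_N(\theta)|\le C_0\,(M+1)$ for a universal $C_0$, i.e. the discrepancy is bounded by the number of continued fraction steps needed to reach $N$. (This is the usual "bounded remainder set up to $\log$" phenomenon for indicators of intervals under rotations; the key point is that at each scale only $O(1)$ extra copies of the "partial quotient error" accumulate once one aligns the interval endpoints with the orbit, an argument going back to Ostrowski and to Kesten.)

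Next I would translate $M\le C'\log N$ from the Diophantine hypothesis. Since $\a\in D_{\log,\xi}$, we have $|\a-p_m/q_m|\ge C/(q_m^2\log^{1+\xi}q_m)$, while the general continued fraction inequality gives $|\a-p_m/q_m|\le 1/(q_m q_{m+1})$; combining these yields $q_{m+1}\le C^{-1} q_m\log^{1+\xi}q_m$, hence $\log q_{m+1}\le \log q_m + (1+\xi)\log\log q_m - \log C$. Iterating, one gets $\log q_M \ge \log q_{m} \cdot$ (something growing at least geometrically in $M-m$ once $q_m$ is large) — more precisely, since $q_{m+1}\ge q_m$ always and in fact $q_{m+2}\ge 2 q_m$, the number of indices $m$ with $q_m\le N$ is at most $O(\log N)$ by the trivial lower bound, but to get the exponent $2+\xi$ I would instead bound: the number of steps $m$ with $q_m\le N$ satisfies $M\le C'\log N$, and then I claim the sharper accounting actually gives $|D_N(\theta)|\le 2C^{-1}\log^{2+\xi}N$. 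The extra powers of $\log$ (going from $\log N$ to $\log^{2+\xi}N$) come from \emph{not} using the sharp "$O(M)$" bound but a cruder one: bounding the Ostrowski digit sum $\sum_{m\le M} a_m$ directly. Each digit satisfies $a_m\le q_{m+1}/q_m\le C^{-1}\log^{1+\xi}q_m\le C^{-1}\log^{1+\xi}N$, and there are $M\le C'\log N$ of them, so $\sum_{m\le M}a_m\le C^{-1}C'\log^{2+\xi}N$; multiplying by the per-block Denjoy--Koksma error $2$ and absorbing $C'$ into the constant gives exactly the stated bound $|D_N(\theta)|\le 2C^{-1}\log^{2+\xi}N$.

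So concretely the steps are: (i) write $N$ in Ostrowski base, split the Birkhoff sum of $\chi_{\bar J}$ into $\sum_{m\le M}a_m$ consecutive blocks each of length some $q_m$; (ii) apply Denjoy--Koksma on each block to bound its contribution to the discrepancy by $\mathrm{Var}(\chi_{\bar J})=2$ (this is where $\int \chi_{\bar J}\,d\lambda=\lambda(\bar J)$ enters, making the per-block error the oscillation $\le 2$); (iii) bound the number of blocks $\sum_{m\le M}a_m\le M\cdot\max_m a_m$ with $M\le C'\log N$ from the trivial $q_{m+2}\ge 2q_m$ and $\max_m a_m\le C^{-1}\log^{1+\xi}N$ from the $D_{\log,\xi}$ bound $q_{m+1}\le C^{-1}q_m\log^{1+\xi}q_m$; (iv) combine. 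The main obstacle is the bookkeeping in step (i)--(ii): one must be careful that splitting a length-$N$ orbit segment into Ostrowski blocks of length $q_m$ genuinely lets each block be estimated by Denjoy--Koksma with a \emph{uniform} oscillation bound independent of where the block sits, which is true because Denjoy--Koksma for $\chi_{\bar J}$ over exactly $q_m$ steps gives $|(\chi_{\bar J})_{q_m}(\theta)-q_m\lambda(\bar J)|\le\mathrm{Var}(\chi_{\bar J})=2$ for \emph{every} $\theta$; the digit-counting in step (iii) is then routine given the Diophantine hypothesis. (If one wanted the optimal $\log^{1+\xi}$ rather than $\log^{2+\xi}$ one would need Kesten-type cancellation between blocks, but the weaker bound stated in the lemma is all that is claimed and suffices.)
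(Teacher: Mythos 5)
Your argument is essentially the paper's own proof: write $N$ in its Ostrowski expansion, use the cocycle identity to split the Birkhoff sum of $\chi_{\bar J}$ into blocks of length $q_m$, bound each block's discrepancy by $\mathrm{Var}(\chi_{\bar J})=2$ via Denjoy--Koksma, and control the digits by $a_m\le q_{m+1}/q_m\le C^{-1}\log^{1+\xi}q_m$ from the $D_{\log,\xi}$ condition together with the $O(\log N)$ count of denominators. The only caveat is your parenthetical claim of a ``sharper'' uniform bound $|D_N(\theta)|\le C_0(M+1)$, which is false for general intervals (Kesten-type cancellation holds only for special intervals), but since you do not use it, the proof as carried out is correct and matches the paper.
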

\begin{proof} Notice that by Denjoy-Koksma inequality, for every $j\in \N$ and $\theta\in \T$, we have
\begin{equation}\label{coj}|(\chi_{\bar J})_{q_j}(\theta)- q_j\lambda (J)|\leq 2.
\end{equation}
To conclude, we write $N=\sum_{j=0}^ra_jq_j$, where $0\leq a_j\leq\frac{q_{j+1}}{q_j}$ (it is called Ostrowski expansion of $N$) use the cocycle identity, the bound in \eqref{coj} for $j=r,r-1,\dots,0$ and the fact that by our Diophantine condition $a_j\leq C^{-1} (\log q_j)^{1+\xi}$ for all $j\in \N$.
\end{proof}

\subsection{Stretching estimates}

Uniform stretching of the Birkhoff sums requires a lower bound on the derivatives of the Birkhoff sums and an upper bound on their second derivatives (see for example Definition \ref{int.good} below).  For any interval $I \subset W$, we therefore introduce the notation 
\begin{equation}\label{ui}u_I:=\sup_{t\in[l_0,l_1]}\sup_{x\in I}|\vphi ''_{N( x,t)}(\bar x)|.
\end{equation}

\begin{lemma}\label{sec.fir} Let $I\subset W.$ If $u_I\geq q_n\log^9q_n$, then for every $t\in [l_0,l_1]$
and every $x\in I\cap T^{-t}_{\a,\vphi}(W)$, we have 
\begin{equation}\label{xmin}
x_{min}^{N( x,t)}\leq \frac{1}{q_n\log^2q_n}
\end{equation}
and
\begin{equation}\label{eq:sec}
|\vphi'_{N( x,t)}(\bar x)|\geq\left(\frac{1}{2x^{N( x,t)}_{min}}\right)^{2-\eta}\;\;\text{   and   }
\;\;\;\;\;\;\;\;|\vphi ''_{N( x,t)}(\bar x)|\leq \left(\frac{2}{x^{N( x,t)}_{min}}\right)^{3-\eta}.
\end{equation}
\end{lemma}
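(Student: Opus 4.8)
The plan is to extract the conclusions from the Denjoy--Koksma estimates of Lemma \ref{koksi} together with the regularity assumptions \eqref{asu2}--\eqref{asu3} on $\vphi$. Fix $I\subset W$ with $u_I\ge q_n\log^9 q_n$, a time $t\in[l_0,l_1]$ and a point $x\in I\cap T^{-t}_{\a,\vphi}(W)$; write $N=N(x,t)$ and $d=x^N_{\min}$. Since $N\le N_t\ll q_k$ and $t\le l_1\le q_{n+2}$, we have $N\le q_{n+2}/c$, so $N\in[q_r,q_{r+1}]$ for some $r\le n+2$, with $q_{r+1}\le q_{n+3}$, say; in particular $q_{r+1}^{3-\eta}$ is of size at most a fixed power of $q_{n}$ (using the Diophantine control $q_{n+1}\le C^{-1}q_n^2\log^{1+\xi}q_n$ and iterating a bounded number of times).

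\textbf{Step 1: the closest-visit bound \eqref{xmin}.} By definition of $u_I$ there is some $t'\in[l_0,l_1]$ and $x'\in I$ with $|\vphi''_{N(x',t')}(\bar x')|\ge q_n\log^9 q_n$. Applying the upper bound in \eqref{koks2} to $x'$ gives $\vphi''\big((x')^{N(x',t')}_{\min}\big)\ge q_n\log^9 q_n - 8q_{r+1}^{3-\eta}\ge \tfrac12 q_n\log^9 q_n$, once $q_n$ is large, since $q_{r+1}^{3-\eta}=o(q_n\log^9 q_n)$. By \eqref{asu3}, $\vphi''(\theta)\sim \theta^{-(3-\eta)}$ near $0$, so $(x')^{N}_{\min}\le (q_n\log^9 q_n)^{-1/(3-\eta)}(1+o(1))\le (q_n\log^2 q_n)^{-1}$. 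Since $I$ is a single interval of the partition $\cI_k$ (so all points of $I$ have comparable closest-visit statistics on a common orbit length, up to the length of $\bar I$, which is $\le 1/q_k\ll 1/q_n$), and since for $x\in I$ the integer $N(x,t)$ varies only in a bounded range as $t$ ranges over $[l_0,l_1]$, the same bound holds for $d=x^N_{\min}$; here one uses that $0\notin\bigcup_{i=0}^{N_t}R_\a^i(\bar I)$ from \eqref{dif}, so the closest visit is attained at a point far (on scale $1/q_k$) from $0$ and is stable under these small perturbations. This gives \eqref{xmin}.

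\textbf{Step 2: the two-sided stretching bounds \eqref{eq:sec}.} For the second derivative, \eqref{koks2} gives $\vphi''_N(\bar x)< \vphi''(d)+8q_{r+1}^{3-\eta}$. By \eqref{asu3}, $\vphi''(d)\le (1+o(1))d^{-(3-\eta)}$, and from \eqref{xmin} we have $d^{-(3-\eta)}\ge (q_n\log^2 q_n)^{3-\eta}\gg q_{r+1}^{3-\eta}$, so the error term is absorbed: $\vphi''_N(\bar x)\le (1+o(1))d^{-(3-\eta)}+o\big(d^{-(3-\eta)}\big)\le (2/d)^{3-\eta}$, with room to spare in the constant $2^{3-\eta}$. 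For the first derivative, \eqref{koks1} gives $|\vphi'_N(\bar x)|>\vphi'(d)-8q_{r+1}^{2-\eta}$, and \eqref{asu2} gives $\vphi'(d)\ge (1+o(1))d^{-(2-\eta)}$; since $d^{-(2-\eta)}\ge (q_n\log^2 q_n)^{2-\eta}\gg q_{r+1}^{2-\eta}$, the error is again negligible, so $|\vphi'_N(\bar x)|\ge (1-o(1))d^{-(2-\eta)}\ge (1/(2d))^{2-\eta}$, using $2^{2-\eta}>1$ to absorb the $(1-o(1))$ factor. Both bounds hold uniformly in $t\in[l_0,l_1]$ and $x\in I\cap T^{-t}_{\a,\vphi}(W)$.

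\textbf{Main obstacle.} The delicate point is not any single estimate but the uniformity in $x\in I$ and $t\in[l_0,l_1]$: the quantity $u_I$ is a supremum over a whole family of $(x,t)$, whereas \eqref{xmin} and \eqref{eq:sec} must hold for a possibly different $(x,t)$ in the same family. The mechanism that makes this work is that within a single interval $I$ of $\cI_k$ the closest-visit distance $x^{N(x,t)}_{\min}$ is essentially constant — it is realized at a well-defined forward iterate that does not change for $x\in I$, since $I$ never wraps around $0$ under the first $N_t$ iterates by \eqref{dif} — so a large value of $|\vphi''|$ at one point forces a small $x_{\min}$ for all points, and then the asymptotics \eqref{asu2}--\eqref{asu3} pin down $|\vphi'_N|$ and $|\vphi''_N|$ for all of them. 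I would state this constancy/stability of $x^{N}_{\min}$ on $I$ carefully (it is the crux) and then the rest is bookkeeping with the error terms, checking only that every error of the form $q_{r+1}^{j-\eta}$ is dominated by the main term $d^{-(j-\eta)}$, which is exactly what \eqref{xmin} guarantees. Details of these routine comparisons can be deferred to the Appendix.
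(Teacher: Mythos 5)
Your Step 2 and the overall two-step structure (use $u_I$ large together with Lemma \ref{koksi} to force a close visit to the singularity, then feed \eqref{xmin} back into Lemma \ref{koksi} to get \eqref{eq:sec}) coincide with the paper's proof. The gap is in Step 1, at exactly the point you single out as the crux. From $u_I\ge q_n^{3-\eta}\log^9 q_n$ (the exponent $3-\eta$, omitted in the statement of the lemma but used everywhere else in the paper, is also what your own arithmetic needs in order to reach $(q_n\log^2q_n)^{-1}$) and \eqref{koks2} you obtain one index $j<N(x_0,t_0)$ with $d(\bar x_0+j\a,0)\le (q_n\log^3 q_n)^{-1}$. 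To transfer this to an arbitrary $x\in I\cap T^{-t}_{\a,\vphi}(W)$ and $t\in[l_0,l_1]$ you must show $N(x,t)>j$: otherwise the minimum defining $x^{N(x,t)}_{\min}$ simply does not range over the index $j$, and no bound follows. You assert this via ``$N(x,t)$ varies only in a bounded range'' and \eqref{dif}; neither justification works. \eqref{dif} only guarantees $0\notin\bigcup_{i\le N_t}R_\a^i(\bar I)$, i.e.\ differentiability of the Birkhoff sums on $I$ --- it carries no information on how $N(\cdot,\cdot)$ varies. And the variation of $N$ is not obviously controlled precisely because $\vphi(\bar x_0+j\a)\gtrsim (q_n\log^3q_n)^{1-\eta}$ is enormous compared to the ceiling values on $W$: the Birkhoff sum $\vphi_N(\bar x)$ jumps by this amount as $N$ crosses $j$, so a priori $N(x,t)$ could stall below $j$ for some $x\in I$ or some $t$. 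Ruling this out is the bulk of the paper's appendix proof (inequality \eqref{nkept}): one first reduces to the time $t_0$, by noting that $N(x,t_0)\ge j>N(x,t)$ would force $\vphi(\bar x+N(x,t)\a)\ge\vphi(\bar x+j\a)$, impossible for $T^t_{\a,\vphi}(x)\in W$; one then excludes $N(x,t_0)<j$ by comparing $\vphi_{N(x,t_0)}(\bar x)$ with $\vphi_{N(x_0,t_0)}(\bar x_0)$, both of which lie within one ceiling value of $t_0+s$ and yet would be forced apart by at least $\vphi(\bar x_0+j\a)-|\vphi_j'(\theta)|\lambda(\bar I)$. Some argument of this type is indispensable; without it your Step 1 does not close.

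A secondary quantitative point: you should carry into Step 2 the bound $x^{N(x,t)}_{\min}\le (2q_n\log^3q_n)^{-1}$, which is what the argument actually produces, rather than the weaker \eqref{xmin}. With only $d\le(q_n\log^2q_n)^{-1}$ the comparison $d^{-(2-\eta)}\gg q_{r+1}^{2-\eta}$ can fail, since $q_{r+1}$ may be as large as $q_{n+2}\sim q_n\log^{2+3\xi}q_n$, which exceeds $q_n\log^2 q_n$.
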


In what follows, for simplicity, we will denote $N( x):= N( x,t)$. 

\begin{lemma}\label{xze} Let $x_0,x\in I\subset W$ with $|\bar x-\bar x_0|\geq \frac{1}{q_n^{3/2-2\eta}}$ satisfy $T^{t}_{\a,\vphi}(x)\in V$  and let 
$$|\vphi'_{N( x_0)}(\bar x_0)|\leq q_n^{7/4+\eta} \quad \text{ and } \quad |\vphi''_{N( x_0)}(\bar x_0)|\leq q_n^{3-\eta}\log^{10}q_n.$$
Then for some $A_{x,x_0}\geq \frac{q_n^{3-\eta}}{\log^5 q_n}$ we have
$$
|\vphi'_{N( x)}(\bar x)-\vphi'_{N( x_0)}(\bar x_0)-A_{x,x_0}(\bar x-\bar x_0)|\leq \frac{A_{x,x_0}}{10}|\bar x-\bar x_0|. 
$$
\end{lemma}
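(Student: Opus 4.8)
\textbf{Proof proposal for Lemma \ref{xze}.}

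The plan is to integrate the second derivative $\vphi''_{N(\cdot)}$ along the segment joining $\bar x_0$ to $\bar x$ and to show that this second derivative stays comparable to $q_n^{3-\eta}$ (up to logarithmic factors) throughout the segment, so that the Taylor remainder is controlled. First I would set $A_{x,x_0}$ to be an average of $\vphi''_{N(\cdot)}$ over the interval $[\bar x_0,\bar x]$ (for instance $A_{x,x_0}=\frac{1}{\bar x-\bar x_0}\int_{\bar x_0}^{\bar x}\vphi''_{N(y)}(y)\,dy$, using that $N(\cdot)$ is constant on the relevant pieces of $I\subset W$ — recall by \eqref{dif} the Birkhoff sums are twice differentiable on $I$, though one must be mildly careful where $N$ jumps). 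The identity $\vphi'_{N(x)}(\bar x)-\vphi'_{N(x_0)}(\bar x_0)=\int_{\bar x_0}^{\bar x}\vphi''_{N(y)}(y)\,dy$ then gives $\vphi'_{N(x)}(\bar x)-\vphi'_{N(x_0)}(\bar x_0)-A_{x,x_0}(\bar x-\bar x_0)=\int_{\bar x_0}^{\bar x}\big(\vphi''_{N(y)}(y)-A_{x,x_0}\big)\,dy$, so the whole game is to bound the oscillation of $\vphi''_{N(y)}(y)$ along the segment by a small fraction of its size.

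The key point is to pin down the size of $\vphi''_{N(y)}(y)$. By Lemma \ref{koksi} (equation \eqref{koks2}) one has $\vphi''_{N(y)}(y)\asymp \vphi''\big(y^{N(y)}_{min}\big)+O(q_{n+1}^{3-\eta})$, and since $\vphi''(\theta)\asymp \theta^{-(3-\eta)}$ near $0$, the term $\vphi''(y^{N(y)}_{min})$ dominates as soon as $y^{N(y)}_{min}\lesssim 1/q_n$. The hypotheses on $\bar x_0$ are designed exactly for this: from $|\vphi'_{N(x_0)}(\bar x_0)|\le q_n^{7/4+\eta}$ and \eqref{koks1} one gets $\vphi'(x^{N(x_0)}_{min})\lesssim q_n^{7/4+\eta}$, hence $x^{N(x_0)}_{min}\gtrsim q_n^{-(7/4+\eta)/(2-\eta)}$, i.e. $x^{N(x_0)}_{min}$ is not too small; combined with $|\vphi''_{N(x_0)}(\bar x_0)|\le q_n^{3-\eta}\log^{10}q_n$ (so $x^{N(x_0)}_{min}\gtrsim q_n^{-1}\log^{-10/(3-\eta)}q_n$ is not too small either) and the lower bound $|\bar x-\bar x_0|\ge q_n^{-(3/2-2\eta)}$, a continuity/discrepancy argument (using Lemma \ref{koksma2} to control how $N(y)$ and the closest returns to $0$ vary as $y$ moves by at most $q_n^{-(3/2-2\eta)}$, which is much larger than a single level-$q_n$ gap but still allows tracking the closest visit) shows that along the entire segment $y^{N(y)}_{min}$ stays within a bounded multiplicative factor of $x^{N(x_0)}_{min}$, and in particular stays $\le 1/(q_n\log^2 q_n)$-ish so that $\vphi''_{N(y)}(y)\asymp (y^{N(y)}_{min})^{-(3-\eta)}$ with the singular term dominating. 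This yields both $A_{x,x_0}\ge q_n^{3-\eta}/\log^5 q_n$ and the relative oscillation bound $|\vphi''_{N(y)}(y)-A_{x,x_0}|\le \tfrac{1}{10}A_{x,x_0}$, since $\vphi''_{N(y)}(y)$ is a slowly varying power of a quantity that itself varies slowly along the segment. Integrating gives the claim. One should also use $T^t_{\a,\vphi}(x)\in V$ to rule out that the orbit of $\bar x$ gets trapped extremely close to $0$ in a way that would make $N(y)$ behave wildly — i.e. to guarantee $N(y)\lesssim q_{n+2}$ and the differentiability on $I$ persists.

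The main obstacle I anticipate is the control of $\vphi''_{N(y)}(y)$ uniformly as $y$ ranges over the \emph{whole} segment $[\bar x_0,\bar x]$, whose length $q_n^{-(3/2-2\eta)}$ is not small on the scale of the return-time structure: as $y$ moves, the index of the iterate achieving the minimum distance to $0$ can change, and one must ensure that at no intermediate $y$ does $y^{N(y)}_{min}$ drop to a scale (like $q_n^{-3}$) where $\vphi''_{N(y)}(y)$ would blow up far beyond $q_n^{3-\eta}\log^{O(1)}q_n$ and destroy the oscillation estimate. This is precisely where the two quantitative hypotheses on $\bar x_0$ and the lower bound on $|\bar x - \bar x_0|$ (preventing $\bar x_0,\bar x$ from being a close return pair) have to be combined with the Denjoy--Koksma discrepancy bound of Lemma \ref{koksma2} and the Diophantine condition $\a\in D_{\log,\xi}$; making this juggling precise, rather than the integration itself, is the technical heart of the argument, and I would expect it to be the part deferred in spirit to careful case analysis (close returns happening before vs. after the index $N(x_0)$).
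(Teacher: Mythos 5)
There is a genuine gap, and it is exactly at the point you dismiss as needing only "mild care": the mismatch $N(x,t)\neq N(x_0,t)$. Your central identity $\vphi'_{N(x)}(\bar x)-\vphi'_{N(x_0)}(\bar x_0)=\int_{\bar x_0}^{\bar x}\vphi''_{N(y)}(y)\,dy$ is false when $N(\cdot,t)$ jumps on the segment: at each jump the function $y\mapsto\vphi'_{N(y,t)}(y)$ jumps by a term of the form $\vphi'(y+N\a)$, and since the orbit $\{R_\a^j\bar I\}_{j\le N_t}$ may come within distance of order $1/q_k$ of the singularity, a single such jump can be of size up to roughly $q_k^{2-\eta}\approx q_n^{2-\eta}\log^{O(1)}q_n$. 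This already exceeds the entire allowed error $\tfrac{1}{10}A_{x,x_0}|\bar x-\bar x_0|$, which at the minimal separation $|\bar x-\bar x_0|=q_n^{-3/2+2\eta}$ is only of order $q_n^{3/2+\eta}\log^{-5}q_n$. So the $N$-jumps cannot be absorbed into the oscillation of $\vphi''$; they must be controlled by a separate mechanism, and your proposal contains none. The paper's proof is organized entirely around this term: it writes
$$
\vphi'_{N(x)}(\bar x)-\vphi'_{N(x_0)}(\bar x_0)=\vphi''_{N(x_0)}(\theta)(\bar x-\bar x_0)+\vphi'_{N(x)-N(x_0)}(\bar x+N(x_0)\a),
$$
sets $A_{x,x_0}:=\vphi''_{N(x_0)}(\theta)$ (so no oscillation control of $\vphi''$ along the segment is needed at all for the fixed-$N$ part — the mean value theorem at the single point $\theta$ is exact), and then spends the whole error budget on the cross term. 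That term is \emph{not} estimated term by term; instead one first bounds the zeroth-order sum $\vphi_{N(x)-N(x_0)}(\bar x+N(x_0)\a)$, using that both points are flowed for the same time $t$ with $T^t_{\a,\vphi}(x)\in V$ (bounded heights) together with a Taylor expansion of $\vphi_{N(x_0)}$ and the propagated bound on $\vphi''_{N(x)}$ over $I$, and then upgrades this to a bound on its derivative via Lemma \ref{fi} ($|\vphi'_m|<(\vphi_m)^{2+2\eta}$): smallness of the extra Birkhoff sum forces the corresponding orbit block to stay away from $0$, hence its derivative is small. This bootstrap through the zeroth-order sums is the heart of the lemma and is absent from your sketch.

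A secondary problem is that even on pieces where $N$ is constant, your plan asks for much more than is available: you need $|\vphi''_{N(y)}(y)-A_{x,x_0}|\le\tfrac{1}{10}A_{x,x_0}$ along the whole segment, whereas the hypotheses and Lemma \ref{koksi} only give an upper bound of order $q_n^{3-\eta}\log^{10}q_n$ (propagating, as in the paper's Claim, to $30\,q_n^{3-\eta}\log^{10}q_n$ on $I$) against a lower bound of order $q_n^{3-\eta}\log^{-5}q_n$; the ratio can be polylogarithmic in $q_n$, so a $10\%$ oscillation bound around the average is not justified. The paper's choice of $A_{x,x_0}$ as the value at the intermediate point sidesteps this entirely, which is why the lemma is stated with "for some $A_{x,x_0}$" rather than for a canonical average.
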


The proofs of Lemmas \ref{sec.fir} and \ref{xze} will be given in Appendix \ref{app.birkhoff}. Lemma \ref{xze} has the following straightforward consequence.

\begin{corollary}\label{smder} If $|\vphi'_{N( x_0)}(\bar x_0)|<3q_n^{3/2+\eta}$ and $|\vphi''_{N( x_0)}(\bar x_0)|<q_n^{3-\eta}\log^{10}q_n$ 
for some $x_0 \in W$, then for every $x\in I$ such that $|\bar x-\bar x_0|\geq \frac{1}{q_n^{3/2-3\eta}}$ either $T^t_{\a,\vphi}(x)\in V^c$
or if $x$ satisfies $T^{t}_{\a,\vphi}(x)\in V$,  then 
\begin{equation}\label{large.first}
|\vphi'_{N( x)}(\bar x)|\geq \frac{q_n^{3-\eta}}{2\log^5 q_n}|\bar x-\bar x_0|.
\end{equation}
\end{corollary}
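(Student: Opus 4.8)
The plan is to feed the hypotheses of Corollary~\ref{smder} directly into Lemma~\ref{xze} and then extract a one-sided lower bound by the triangle inequality, so the only real work is checking that the hypotheses match and that the linear term dominates the error term. First I would dispose of the trivial alternative: if $T^t_{\a,\vphi}(x)\in V^c$ the asserted dichotomy already holds, so I may assume $T^t_{\a,\vphi}(x)\in V$, which is precisely one of the hypotheses of Lemma~\ref{xze}. Then I would verify, for $n$ large (as throughout this section), that the remaining hypotheses of Lemma~\ref{xze} follow from those of the corollary: the separation $|\bar x-\bar x_0|\geq q_n^{-(3/2-3\eta)}$ is stronger than the required $|\bar x-\bar x_0|\geq q_n^{-(3/2-2\eta)}$ since $\eta>0$; the bound $|\vphi'_{N(x_0)}(\bar x_0)|<3q_n^{3/2+\eta}$ implies $|\vphi'_{N(x_0)}(\bar x_0)|\leq q_n^{7/4+\eta}$ because $3\leq q_n^{1/4}$ eventually; and $|\vphi''_{N(x_0)}(\bar x_0)|<q_n^{3-\eta}\log^{10}q_n$ is exactly the other bound Lemma~\ref{xze} asks for.

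Applying Lemma~\ref{xze} then yields a coefficient $A_{x,x_0}\geq q_n^{3-\eta}/\log^5 q_n$ with $|\vphi'_{N(x)}(\bar x)-\vphi'_{N(x_0)}(\bar x_0)-A_{x,x_0}(\bar x-\bar x_0)|\leq \frac{1}{10}A_{x,x_0}|\bar x-\bar x_0|$, hence $|\vphi'_{N(x)}(\bar x)|\geq \frac{9}{10}A_{x,x_0}|\bar x-\bar x_0|-|\vphi'_{N(x_0)}(\bar x_0)|$. The last step is to see that the linear term dominates the subtracted constant: since $A_{x,x_0}|\bar x-\bar x_0|\geq q_n^{3/2+2\eta}/\log^5 q_n$ while $|\vphi'_{N(x_0)}(\bar x_0)|<3q_n^{3/2+\eta}$, the ratio of these two quantities is at least $q_n^{\eta}/(3\log^5 q_n)$, which tends to infinity, so for $n$ large $|\vphi'_{N(x_0)}(\bar x_0)|\leq \frac{2}{5}A_{x,x_0}|\bar x-\bar x_0|$, and therefore $|\vphi'_{N(x)}(\bar x)|\geq \frac{1}{2}A_{x,x_0}|\bar x-\bar x_0|\geq \frac{q_n^{3-\eta}}{2\log^5 q_n}|\bar x-\bar x_0|$, which is the claim.

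I do not expect a genuine obstacle: the statement is a true corollary, and its proof is pure bookkeeping with the constants produced by Lemma~\ref{xze}. The points that require a little care are that every inequality here is asymptotic in $n$ (consistent with the running conventions of Section~\ref{est.birk}), and the mild abuse in ``for some $x_0\in W$'': to invoke Lemma~\ref{xze} one needs $x_0$ in the same interval $I$ as $x$, which is harmless because the mesh of $W$ and the scale $q_n^{-(3/2-3\eta)}$ are compatible for $n$ large, and in any event the statement is vacuous on any $I$ containing no admissible $x$.
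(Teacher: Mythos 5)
Your proposal is correct and is exactly the argument the paper intends: the paper states Corollary \ref{smder} as a ``straightforward consequence'' of Lemma \ref{xze}, and your verification that the corollary's hypotheses imply those of the lemma (with $3q_n^{3/2+\eta}\leq q_n^{7/4+\eta}$ and $q_n^{-3/2+3\eta}\geq q_n^{-3/2+2\eta}$), followed by the triangle inequality and the absorption of $|\vphi'_{N(x_0)}(\bar x_0)|$ into a fraction of $A_{x,x_0}|\bar x-\bar x_0|$ using $q_n^{\eta}\gg\log^5 q_n$, is precisely the bookkeeping the authors leave implicit. The caveats you flag (asymptotics in $n$ and taking $x_0$ in the same interval $I$ as $x$) are consistent with the paper's running conventions and with how the corollary is actually applied.
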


\bg
\section{Mixing rate on intervals, construction of $\mathcal B_l$} \label{sec.partition}
\blk
 In what follows $I\subset W$ will be a horizontal interval  (such that  $\bar I \in \cI_k$) and $h=q_n^{3/5}$. Then we 
 know that the iterates $R_\a^i(\bar I)$ for $i=0,\dots,h$ are all disjoint and do not contain $0$. Recall the notation 
\begin{equation*}u_I:=\sup_{t\in[l_0,l_1]}\sup_{x\in I }|\vphi ''_{N( x,t)}(\bar x)|.
\end{equation*}

Moreover whenever $I_t:= I\cap T_{\a, \vphi}^{-t}W \not=\emptyset$, we define
\begin{equation}\label{rti}
r^t_I=\inf_{x\in I_t}|\vphi'_{N( x,t)}(\bar x)|
\end{equation}
 (if $I_t=\emptyset$ we may define $r^t_I=+\infty$). We also let
 
 \begin{equation}\label{ri}r_I=\inf_{t\in[l_0,l_1]}r^t_I.
\end{equation}

\begin{definition}[Complete towers]\label{compl}  Fix a horizontal interval $ I \subset  M \cap (\T \times \{s\})$ centered at $z$ and a number $h>0$.  
A complete tower of `height' $h$ above the interval $I$ is the set:
$$
\bigcup_{i=0}^{N(z,h)} (R_\a^i (\bar I))_\vphi \setminus \cup_{t=0}^s T^t_{\a, \vphi}(\bar I \times \{0\}).
$$
\end{definition}

We now describe the bad set for correlations $\mathcal B_l$ (see Figure \ref{wykres2}).

\begin{proposition}\label{prop.badset} There exists a set $\mathcal{B}_l\subset M$ with the following properties:
\begin{enumerate}
\item[$(B_1)$]  $\mathcal B_l=U_1\cup \dots \cup U_m$ where $U_i$ are disjoint complete towers with heights $h=q_n^{3/5}$ over intervals $
B_i \subset W$ with horizontal measure $\lambda ( B_i)=\frac{2}{q_n^{3/2-5\eta}} $;
\item[$(B_2)$]  $\mu(\mathcal B_l)\leq q_n^{-1/2+6\eta}$; 
\item[$(B_3)$]  for every interval $I\subset W$,  we have  $I=J_1 \sqcup J_2 \sqcup I_{bad}$ where  
either  $I\cap \mathcal{B}_l= \emptyset$ and $I_{bad}$ and $J_2$ are empty, or $I_{bad}$ is a level of some $U_i$ and $J_1,J_2$ are intervals. When $I_{bad}$ is not empty, we denote by  $x_{bad}$ its center.
\item[$(B_4)$]  for every interval $I\subset W$ and every $t\in [l_0,l_1]$, we have one of the following 
\begin{itemize} \item[$(B_4.i)$] $r^t_I\geq q_n^{3/2+\eta}$,

\item[$(B_4.ii)$] $r^t_I<q_n^{3/2+\eta}$, $I_{bad}\neq \emptyset$, $u_I\leq q_n^{3-\eta}\log^9q_n$ and 
for every $x \in J_1 \sqcup J_2$ s.t. $T_{\a,\vphi}^t x \in W$  
$$|\vphi '_{N( x,t)}{(\bar x)}|\geq   \frac{q_n^{3-\eta}}{\log ^6 q_n}|\bar x-\bar x_{bad}|$$ 
\end{itemize} 
 \item[$(B_5)$]  For every $t \in [l_0,l_1]$, for every $i \in [1,m]$,
 there exists a complete tower $\cT_{t,i}$ over an interval $B_{t,i}=[\theta_{t,i}-\frac{1}{q_n^{3/2-5\eta}},\theta_{t,i}+\frac{1}{q_n^{3/2-5\eta}}]  \times \{s_{t,i}\} \subset
  M$ of height $h_{t,i}\geq q_n^{3/5-1/50}$ such that 
 $$\mu(\left(T^{t}_{\a,\vphi}(U_i)\triangle \cT_{t,i}\right)\cap M_\zeta)\leq q_n^{-1 +10\eta}.$$
 \end{enumerate}
\end{proposition}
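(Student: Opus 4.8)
\textbf{Plan of proof for Proposition \ref{prop.badset}.}

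The plan is to build $\mathcal B_l$ as an explicit union of complete towers located along the orbit segments that pass closest to the singularity, and then to verify the five properties one at a time, most of them being consequences of the Birkhoff-sum estimates of Section \ref{est.birk}. The starting point is that the bad behaviour (almost-identity of $T^t_{\a,\vphi}$, i.e.\ small derivative of $\vphi_{N(\cdot,t)}$) can only occur, by Lemma \ref{sec.fir} and Corollary \ref{smder}, on points $x$ whose orbit up to time $t$ makes a very close visit to $0$, of the order $x_{min}^{N(x,t)}\lesssim q_n^{-3/2}$ (this is the scale where $|\vphi'_N|\sim q_n^{3/2}$). First I would fix the scale $\delta_n:=q_n^{-(3/2-5\eta)}$ and consider, on the base circle, the $\delta_n$-neighbourhood of $0$; using Lemma \ref{koksma2} (discrepancy estimate) and the Ostrowski expansion, the number of indices $i\le N_t$ with $\{-i\a\}$ inside this neighbourhood is controlled, which bounds the number $m$ of towers. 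For each such close-visit time one gets, by pulling back under the rotation, a horizontal interval $B_i\subset W$ of length $2/q_n^{3/2-5\eta}$ positioned so that a close visit to $0$ occurs within the next $N(z,h)$ iterates, $h=q_n^{3/5}$; the towers $U_i$ are the corresponding complete towers of height $h$. Disjointness of the $U_i$ follows from the partition structure $\cI_k$ (recall $N_t\ll q_k$, so all relevant iterates of the base intervals are disjoint) together with discarding redundant overlaps; this gives $(B_1)$.

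For $(B_2)$ the measure estimate is a counting argument: each $U_i$ has $\mu$-measure at most (height in the fiber $\sim q_n^{3/5}$, coming from Lemma \ref{koksi} which controls $\vphi_N$ near a close visit) times $\lambda(\bar B_i)\sim q_n^{-3/2+5\eta}$, i.e.\ at most $q_n^{-9/10+5\eta}$ per tower, and the number of towers is at most of order $q_n^{3/5}\cdot \delta_n\cdot(\text{number of close returns})$, which by Lemma \ref{koksma2} is at most $q_n^{1/2+O(\eta)}$ up to logarithmic factors absorbed into the $\eta$ budget; multiplying gives $\mu(\mathcal B_l)\le q_n^{-1/2+6\eta}$. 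Property $(B_3)$ is essentially a bookkeeping statement: for a horizontal interval $I\in\cI_k$, since each $U_i$ is a complete tower over an interval of $\cI_k$-comparable size, $I$ meets at most one level of at most one $U_i$, and that intersection is a subinterval $I_{bad}$; removing it splits $I$ into (at most) two intervals $J_1,J_2$. Here I would have to be slightly careful to choose the towers $B_i$ so that no interval $I$ straddles two of them — this is arranged by spacing, using again that the close-return indices are separated (they differ by at least $q_{n}$-scale gaps, since between two returns of the rotation to a $\delta_n$-ball the orbit must make $\gtrsim \delta_n^{-1}$ steps) and the height $h=q_n^{3/5}$ is much smaller.

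Property $(B_4)$ is the analytic heart. Fix $I\subset W$ and $t\in[l_0,l_1]$. If no point of $I_t$ has a close visit, i.e.\ $u_I< q_n\log^9 q_n$ or more precisely $x_{min}^{N(x,t)}\gtrsim q_n^{-3/2}$ on $I_t$, then by \eqref{eq:sec} of Lemma \ref{sec.fir} (contrapositive) one gets $r^t_I\ge q_n^{3/2+\eta}$, which is case $(B_4.i)$. Otherwise some $x_0\in I$ makes a close visit; then $I_{bad}\ne\emptyset$ by construction (the close-visit interval was included in some $B_i$), the bound $u_I\le q_n^{3-\eta}\log^9 q_n$ comes from \eqref{eq:sec} and \eqref{con:ele2} applied with $s\le \vphi$ on $W$, and the linear lower bound on $|\vphi'_{N(x)}(\bar x)|$ away from $x_{bad}$ is exactly Corollary \ref{smder}, provided one checks its hypotheses $|\vphi'_{N(x_0)}(\bar x_0)|<3q_n^{3/2+\eta}$ and $|\vphi''_{N(x_0)}(\bar x_0)|<q_n^{3-\eta}\log^{10}q_n$ at the center $x_{bad}$ — which hold because $x_{bad}$ is the point of $I$ realizing (up to the scale $\delta_n$) the closest visit, so by Lemma \ref{fi} its derivatives are of the minimal possible size on $I$. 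The slightly weaker constant $\log^6 q_n$ in $(B_4.ii)$ versus $\log^5 q_n$ in Corollary \ref{smder} leaves room for the discrepancy between $x_{bad}$ (center of $I_{bad}$) and the true minimizing point $x_0$.

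Finally $(B_5)$ says that the $t$-preimage of a tower $U_i$ is, up to a set of measure $q_n^{-1+10\eta}$, again a complete tower of comparable height over a base interval near $0$. The point is that $T^{-t}_{\a,\vphi}$ sends the complete tower $U_i$ to a ``slanted'' tower, but because on the relevant orbit segment the Birkhoff sum $\vphi_{N(\cdot)}$ has bounded oscillation (we are near a close visit, so Lemma \ref{koksi} controls $\vphi_N$, $\vphi_N'$, $\vphi_N''$ tightly), the slanted tower differs from a genuine complete tower $\cT_{t,i}$ over an interval $B_{t,i}\ni$ a point within $q_n^{-3/2+5\eta}$ of $0$, only on a thin ``corner'' set; quantifying the corner using the second-derivative bound \eqref{koks2} and the height loss $q_n^{3/5}\to q_n^{3/5-1/50}$ gives the measure bound. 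The main obstacle, in my view, is $(B_5)$ together with the precise placement of the $B_i$ needed for $(B_3)$: one must simultaneously (i) capture \emph{all} close-visit orbit segments (so that $\mathcal B_l$ genuinely contains the bad set for Proposition \ref{tdec}), (ii) keep the total measure below $q_n^{-1/2+6\eta}$, and (iii) keep the towers disjoint and non-straddling, and this requires a careful choice of the close-return scale $\delta_n=q_n^{-(3/2-5\eta)}$ balanced against the height $h=q_n^{3/5}$ and the Diophantine/discrepancy bounds — the $\eta$-exponents in $(B_1)$–$(B_5)$ are exactly what makes this balance work, so the bulk of the work is tracking these exponents through Lemmas \ref{koksi}, \ref{fi}, \ref{koksma2}, \ref{sec.fir} and Corollary \ref{smder}.
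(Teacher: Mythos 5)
Your plan rests on a characterization of the bad set that is actually the opposite of what happens, and this is not a repairable detail but the heart of the matter. You propose to place the towers $U_i$ over the orbit segments that make very close visits to the singularity, at scale $q_n^{-3/2}$, on the grounds that small stretching ``can only occur'' on such orbits. But by the Denjoy--Koksma type estimate \eqref{koks1} of Lemma \ref{koksi}, the closest visit dominates the derivative of the Birkhoff sum: if $x_{min}^{N(x,t)}\lesssim q_n^{-3/2}$ then $|\varphi'_{N(x,t)}(\bar x)|\gtrsim q_n^{3(1-\eta)}-O(q_{n+1}^{2-\eta})$, which is enormous compared to the bad threshold $q_n^{3/2+\eta}$; this is exactly the content of Lemma \ref{sec.fir} (large $u_I$, i.e.\ a very close visit, forces \emph{large} stretching), and it is why $(B_4.ii)$ records the bound $u_I\leq q_n^{3-\eta}\log^9 q_n$ on bad intervals. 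Small values of $|\varphi'_{N(x,t)}|$ come instead from cancellations between the nearest visits on the two sides of the \emph{symmetric} singularity (the ``remnant of Denjoy--Koksma'' discussed in the introduction), and such points make no especially close visit and cannot be localized a priori by a return-distance condition. Consequently the set you build (towers over preimages of a $q_n^{-3/2+5\eta}$-neighbourhood of $0$) is essentially contained in the region of very strong stretching and is roughly disjoint from the genuine bad set; with your $\mathcal B_l$, property $(B_4)$ fails (there are intervals $I$ and times $t$ with $r^t_I<q_n^{3/2+\eta}$ and $I\cap\mathcal B_l=\emptyset$), and hence Proposition \ref{tdec} downstream would be false. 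The same inversion appears when you justify the hypotheses of Corollary \ref{smder} at $x_{bad}$ ``because $x_{bad}$ realizes the closest visit, so its derivatives are of minimal possible size'': closest-visit points have the largest, not the smallest, derivative contributions.

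The paper's construction is different precisely because the bad set cannot be described geometrically in advance: one scans the Rokhlin tower $\cD$ of height $q_k-1$ over $D_1$ (and $D_2$) from bottom to top and greedily selects the first level $D(h_i)$ (above the top of the previous tower) on which the stretching condition actually fails, i.e.\ $r_{D(h_i)}\leq 2q_n^{3/2+\eta}$ for some $t_i\in[l_0,l_1]$ and some $x_i$ with $T^{t_i}_{\alpha,\varphi}x_i\in W$; the tower $U_i$ of height $q_n^{3/5}$ is then erected over an interval of width $2q_n^{-3/2+5\eta}$ centred at that witness $x_i$. Disjointness, $(B_1)$, $(B_3)$ and the measure bound $(B_2)$ (total vertical extent $\leq\varphi_{q_k}(\alpha)\lesssim q_{k+1}$ times the common width) are then immediate, while the real work is the proof of $(B_4)$: one must show that if $r^t_I<q_n^{3/2+\eta}$ for \emph{another} time $t\in[l_0,l_1]$, the corresponding near-critical point $x^t_I$ lies within $q_n^{-3/2+4\eta}$ of $\bar x_{bad}$. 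This is done by a rigidity argument (steps a., b., c.\ in the paper) combining the cocycle identity, the bounds $|t-t_i|\leq l_1-l_0<q_n^{1/10}$ and $h<2q_n^{3/5+1/50}$, the estimates \eqref{con:ele}--\eqref{con:ele2}, and Corollary \ref{smder}, which shows two points with small derivative of the Birkhoff sum at comparable times must be close; nothing in your plan plays this role, and without it even a corrected (derivative-based) selection of the $B_i$ would not yield $(B_4.ii)$. Your sketch of $(B_5)$ is closer in spirit to the paper's (bounded oscillation of the Birkhoff sums over the narrow base, giving a small symmetric difference with a genuine complete tower), but as stated it again leans on the close-visit picture rather than on the discrepancy bound away from the singularity that the paper uses.
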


For a horizontal interval $I \subset W$ such that  $T^{t}_{\a,\vphi}I\cap W\neq \emptyset$, the quantity that measures uniform stretching on $I$ is the ratio
\begin{equation}\label{mj}
S_I^t:=\inf_{x \in I_t} \frac{(\vphi '_{N( x,t)}(\bar x))^2}{\vphi ''_{N( x,t)}(\bar x)},
\end{equation}
where $I_t=I\cap T^{-t}_{\a,\vphi}(W)$ (we set $S_I^t=+\infty$ if $I\cap T^{-t}_{\a,\vphi}W= \emptyset$).

\smallskip
We recall that the integer $l$, hence the integers $l_0=l^{21/20}$, $l_1= (l+1)^{21/20}$,  and the integer $n$ such that
$q_n <l_0 < q_{n+1}$, are fixed throughout this section. 

\begin{definition}\label{int.good}   An interval $J =[u,v] \subset I \subset W$ is called {\em good}  if for every $t \in [l_0,l_1]$, at least one of the following holds:

\begin{equation}
\label{verygood} 
S_J^t \geq t^{\frac{1}{2}+ 2 \epsilon}
\end{equation}    
or for some choice of $x^*\in I$  and for every $x \in J$ such that  $T^t_{\a,\vphi}x\in W$, we have 
\begin{equation}
\label{good}
|\vphi ''_{N( x,t)}{(\bar x)}|<q_n^{3-\eta}\log ^9 q_n\text{  and }
|\vphi '_{N( x,t)}{(\bar x)}|\geq   \frac{1}{2}q_n^{3/2+\eta} +  \frac{1}{2}\frac{q_n^{3-\eta}}{\log ^6 q_n}|\bar x-\bar x^*|.
\end{equation}
When we check~\eqref{verygood} or~\eqref{good} for a given $t$, we say that $J$ is $t$-good. 

\end{definition}

\begin{proposition}\label{good.dec} In the decomposition $I=J_1 \sqcup J_2 \sqcup I_{bad}$ of $(B_3)$, we have that $J_1$ and $J_2$ are good. 
\end{proposition}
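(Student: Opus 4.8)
The plan is to show that each of the two intervals $J_1, J_2$ produced by the decomposition in $(B_3)$ satisfies the good-interval condition of Definition \ref{int.good}, by a case analysis driven by the dichotomy $(B_4)$ and the stretching estimates of Section \ref{est.birk}. Fix $I \subset W$ and write $I = J_1 \sqcup J_2 \sqcup I_{bad}$; we treat $J := J_1$ (the argument for $J_2$ is identical). Fix $t \in [l_0,l_1]$; we must produce for this $t$ either \eqref{verygood} or \eqref{good}. We invoke $(B_4)$: either $(B_4.i)$ holds, i.e.\ $r^t_I \geq q_n^{3/2+\eta}$, or $(B_4.ii)$ holds.

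\emph{Case 1: $(B_4.i)$.} Here $|\vphi'_{N(x,t)}(\bar x)| \geq q_n^{3/2+\eta}$ for all $x \in I_t$. I would split into two sub-cases according to the size of $u_I$. If $u_I \leq q_n^{3-\eta}\log^9 q_n$, then combining the lower bound $r^t_I \geq q_n^{3/2+\eta}$ on the first derivative with the upper bound $u_I$ on the second derivative gives
$$
S_J^t \geq \frac{(r^t_I)^2}{u_I} \geq \frac{q_n^{3+2\eta}}{q_n^{3-\eta}\log^9 q_n} = \frac{q_n^{3\eta}}{\log^9 q_n} \gg q_n^{\eta} \geq t^{1/2+2\epsilon},
$$
using $t \leq l_1 \leq q_{n+2}$ and the Diophantine comparison of $q_{n+2}$ with $q_n$ (with $\epsilon$ chosen small relative to $\eta$), so \eqref{verygood} holds. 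If instead $u_I > q_n^{3-\eta}\log^9 q_n$, then in particular $u_I \geq q_n\log^9 q_n$, so Lemma \ref{sec.fir} applies: for every $x \in I \cap T^{-t}_{\a,\vphi}(W)$ we get $x^{N(x,t)}_{min} \leq (q_n\log^2 q_n)^{-1}$ together with $|\vphi'_{N(x)}(\bar x)| \geq (2x^{N(x)}_{min})^{-(2-\eta)}$ and $|\vphi''_{N(x)}(\bar x)| \leq (2/x^{N(x)}_{min})^{3-\eta}$; hence
$$
S_J^t \geq \inf_x \frac{(x^{N(x)}_{min})^{-(4-2\eta)} 2^{-(4-2\eta)}}{(x^{N(x)}_{min})^{-(3-\eta)}2^{3-\eta}} = \inf_x c\,(x^{N(x)}_{min})^{-(1-\eta)} \geq c\,(q_n\log^2 q_n)^{1-\eta} \geq t^{1/2+2\epsilon}
$$
for a numerical constant $c>0$, again using $t \leq q_{n+2}$ and $\eta$ close to $1$ (the hypotheses $\eta \in (0,\tfrac1{1000})$ here play against us, so the relevant comparison is $q_n^{1-\eta} \gg q_{n+2}^{1/2}$, which holds by the Diophantine bound $q_{n+2} \leq q_n \log^{O(1)} q_n$ raised to a power and the fact that $1-\eta > 1/2$; in fact one checks $q_n^{1-\eta}$ beats $q_{n+2}^{1/2+2\epsilon} \approx q_n^{1/2+2\epsilon}$ comfortably). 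So \eqref{verygood} holds in this sub-case too.

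\emph{Case 2: $(B_4.ii)$.} Now $r^t_I < q_n^{3/2+\eta}$, $I_{bad} \neq \emptyset$, $u_I \leq q_n^{3-\eta}\log^9 q_n$, and $(B_4.ii)$ directly furnishes, for every $x \in J_1 \sqcup J_2$ with $T^t_{\a,\vphi}x \in W$, the bound $|\vphi'_{N(x,t)}(\bar x)| \geq \frac{q_n^{3-\eta}}{\log^6 q_n}|\bar x - \bar x_{bad}|$. I then verify \eqref{good} with the choice $x^* := x_{bad}$: the second-derivative bound $|\vphi''_{N(x,t)}(\bar x)| < q_n^{3-\eta}\log^9 q_n$ is exactly $u_I \leq q_n^{3-\eta}\log^9 q_n$, and for the first-derivative bound I need
$$
\frac{q_n^{3-\eta}}{\log^6 q_n}|\bar x - \bar x_{bad}| \geq \tfrac12 q_n^{3/2+\eta} + \tfrac12 \frac{q_n^{3-\eta}}{\log^6 q_n}|\bar x - \bar x_{bad}|,
$$
which is equivalent to $\frac{q_n^{3-\eta}}{\log^6 q_n}|\bar x - \bar x_{bad}| \geq q_n^{3/2+\eta}$, i.e.\ $|\bar x - \bar x_{bad}| \geq q_n^{-3/2+2\eta}\log^6 q_n$. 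This last inequality is a geometric fact about the decomposition: since $I_{bad}$ is a level of some $U_i$ over an interval $B_i$ with $\lambda(\bar B_i) = 2 q_n^{-3/2+5\eta}$ by $(B_1)$, and $J_1$ lies on one side of $I_{bad}$, every point $\bar x \in \bar J_1$ is at distance at least $q_n^{-3/2+5\eta}$ (the half-length of $\bar B_i$) from the center $\bar x_{bad}$ — wait, that is the distance from the \emph{edge}, not necessarily improving the inequality needed; I must instead argue that $(B_3)$'s construction is arranged so that $J_1$ starts at the boundary of $I_{bad}$, hence $|\bar x - \bar x_{bad}| \geq \lambda(\bar B_i)/2 = q_n^{-3/2+5\eta} \geq q_n^{-3/2+2\eta}\log^6 q_n$ for $n$ large, since $3\eta$ is a fixed positive exponent beating any power of $\log q_n$. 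This closes Case 2, and hence the proposition.

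\emph{Main obstacle.} The crux is the bookkeeping in Case 2 — matching the exponents in the radius bound from $(B_4.ii)$ against the explicit threshold in \eqref{good}, and confirming that the geometric separation $|\bar x - \bar x_{bad}| \gtrsim q_n^{-3/2+5\eta}$ forced by the tower width in $(B_1)$ is exactly what is needed, with the $q_n^{3\eta}/\log^6 q_n$ slack absorbing the logarithmic losses. In Case 1 the only delicate point is checking that $q_n^{1-\eta}$ (from Lemma \ref{sec.fir}) dominates $t^{1/2+2\epsilon} \leq q_{n+2}^{1/2+2\epsilon}$; this uses $\eta < 1/2$ together with the Diophantine control $q_{n+2} \leq C^{-1} q_n \log^{O(1)} q_n$ coming from $\a \in D_{\log,\xi}$ and the choice of $q_k$, and the smallness of $\epsilon$ relative to the gap $1/2 - \eta$. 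Everything else is a direct substitution into the inequalities already established in Section \ref{est.birk}.
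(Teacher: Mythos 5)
Your Case 2 (the $(B_4.ii)$ alternative, with $x^*=x_{bad}$, the separation $|\bar x-\bar x_{bad}|\geq q_n^{-3/2+5\eta}$ coming from the tower width in $(B_1)$, and $q_n^{3\eta}\geq \log^6 q_n$ absorbing the logarithms) and your second sub-case of Case 1 (where $u_I\geq q_n^{3-\eta}\log^9 q_n$, handled via Lemma \ref{sec.fir}) both match the paper's argument. The genuine gap is in your first sub-case of Case 1, i.e.\ $r^t_I\geq q_n^{3/2+\eta}$ and $u_I\leq q_n^{3-\eta}\log^9 q_n$: the chain
$$
S^t_J\;\geq\;\frac{(r^t_I)^2}{u_I}\;\geq\;\frac{q_n^{3+2\eta}}{q_n^{3-\eta}\log^9 q_n}\;=\;\frac{q_n^{3\eta}}{\log^9 q_n}\;\geq\;t^{1/2+2\epsilon}
$$
fails at the last step. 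Since $t\geq l_0>q_n$ and $\eta<\tfrac{1}{1000}$, we have $t^{1/2+2\epsilon}>q_n^{1/2}$ while $q_n^{3\eta}\leq q_n^{3/1000}$; your claim ``$q_n^{3\eta}/\log^9 q_n\gg q_n^{\eta}\geq t^{1/2+2\epsilon}$'' would require $3\eta>1/2$. In the regime where $r^t_I$ is only of order $q_n^{3/2+\eta}$ and the second derivative is of order $q_n^{3-\eta}$, the uniform-stretch ratio really can be as small as roughly $q_n^{3\eta}$, so \eqref{verygood} cannot be rescued there; this regime is exactly where the substance of the proposition lies.

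The paper (Lemma \ref{pr1}, Case 2) resolves it by a further split on $r^t_I$: if $r^t_I\geq q_n^{7/4+\eta/2}$, then $S^t_I\geq q_n^{7/2+\eta}/(q_n^{3-\eta}\log^9q_n)\geq q_n^{1/2+\eta}\geq t^{1/2+\eps}$ and \eqref{verygood} does hold; if instead $q_n^{3/2+\eta}\leq r^t_I< q_n^{7/4+\eta/2}$, one picks $x_0\in I$ realizing the infimum $r^t_I$ of $|\vphi'_{N(\cdot,t)}|$ and applies Lemma \ref{xze} (its hypotheses $|\vphi'_{N(x_0)}(\bar x_0)|\leq q_n^{7/4+\eta}$ and $|\vphi''_{N(x_0)}(\bar x_0)|\leq q_n^{3-\eta}\log^{10}q_n$ are precisely what is available here) to obtain linear growth $|\vphi'_{N(x,t)}(\bar x)|\geq \frac{2q_n^{3-\eta}}{\log^6 q_n}|\bar x-\bar x_0|$ away from $x_0$, with a separate sign argument on the side $\bar x<\bar x_0$; together with $r^t_I\geq q_n^{3/2+\eta}$ this gives \eqref{good} with $x^*=x_0$ (not $x_{bad}$, which need not even exist in this case). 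Your plan never invokes Lemma \ref{xze} or Corollary \ref{smder} in Case 1, so this is a missing idea rather than bookkeeping. (A minor point: in your other sub-case the parenthetical about ``$\eta$ close to $1$'' is confused — the smallness of $\eta$ helps there, since $(q_n\log^2q_n)^{1-\eta}\geq q_n^{0.99}\gg q_{n+2}^{1/2+2\epsilon}$ — but your conclusion is correct.)
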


\begin{proof}[Proof of Proposition \ref{good.dec}] Let $t \in [l_0,l_1]$.  If $r^t_I<q_n^{3/2+\eta}$ then  \eqref{good} holds on $J_1$ and $J_2$ (with $x^*=x_{bad}$) due to Lemma~\ref{sec.fir},  Proposition~\ref{prop.badset}, part $(B_4.ii)$, and the fact that for $x \in J_1 \cup J_2$ we have that 
$|\bar x-\bar x_{bad}|\geq q_n^{-3/2+5\eta}$. 

Now, if $r^t_I\geq q_n^{3/2+\eta}$, then we will actually establish that all of $I$ is $t$-good (which in particular implies the conclusion of Proposition \ref{good.dec} in this case):
\begin{lemma}\label{pr1} For any $t \in [l_0,l_1]$, 
if $r^t_I\geq q_n^{3/2+\eta}$, then $I$ is $t$-good. 
\end{lemma}

\begin{proof}[Proof of Lemma \ref{pr1}]

\subsubsection*{\bf Case 1: $u_I\geq q_n^{3-\eta}\log ^9 q_n$.}  In this case we do not use the assumption $r^t_I\geq q_n^{3/2+\eta}$. 
We use Lemma \ref{sec.fir} and get for every  $t\in [l_0,l_1]$ and every $x\in I\cap T^{-t}_{\a,\vphi}(W)$
$$ S ^t_I = \inf_{x\in I_t} \frac{\left(\vphi '_{N( x,t)}(\bar x)\right)^2}{|\vphi ''_{N( x,t)}(\bar x)|}\geq \frac{2^{-7}}{(x^{N( x,t)}_{min})^{1-\eta}}\geq q_n^{2/3} 
\geq t^{1/2+\epsilon}.$$
The last inequality holds because of $t<q_{n+2}$ and the Diophantine assumptions on $\a$.
This shows that $I$ satisfies \eqref{verygood} and hence finishes the proof of Lemma \ref{pr1} in this case.

\subsubsection*{\bf Case 2:  $u_I< q_n^{3-\eta}\log^9q_n$.}
 Notice first that if $r^t_I\geq q_n^{7/4+\frac{\eta}{2}}$ (see \eqref{rti} for the definition of $r^t_I$), then either $x\in T^{-t}_{\a,\vphi}(W^c)$ or
$$
S ^t_I = \inf_{x\in I_t} \frac{(\vphi '_{N( x,t)}(\bar x))^2}{\vphi ''_{N( x,t)}(\bar x)}\geq
\frac{q_n^{7/2+\eta}}{q_n^{3-\eta}\log^9 q_n}\geq q_n^{1/2+\eta}\geq
 t^{1/2+\eps},
$$
where the last inequality again holds because of $t<q_{n+2}$ and assumptions on $\a$. Therefore \eqref{verygood} holds for $I$ and the proof is 
finished in this case .\\

Let us consider only $x\in I$ such that $T^t_{\a,\vphi}(x)\in W$.
 If $r^t_I<q_n^{7/4+1/2\eta}$, let $x_0\in I$ be such that $|\vphi'_{N( x_0,t)}(\bar x_0)|=r^t_I$. Let us assume WLOG that $\vphi'_{N( x_0,t)}(\bar x_0)>0$.  Then by Lemma \ref{xze},  whenever $\bar x\geq \bar x_0+\frac{1}{q_n^{3/2-2\eta}}$, we have
\begin{equation}\label{exs}
|\vphi'_{N( x,t)}(\bar x)|\geq \frac{q_n^{3-\eta}}{2\log^5 q_n}|\bar x-\bar x_0|\geq  \frac{2q_n^{3-\eta}}{\log^6 q_n}|\bar x-\bar x_0|.
\end{equation}

If $\bar x<\bar x_0-\frac{1}{q_n^{3/2-2\eta}}$,
then $\vphi'_{N( x,t)}(\bar x)<0$. Indeed, otherwise by Lemma \ref{xze} we have
$$
0\leq\vphi'_{N( x,t)}(\bar x)<\vphi'_{N( x_0,t)}(\bar x_0)+\frac{q_n^{3-\eta}}{2\log^5 q_n}(\bar x-\bar x_0)\leq \vphi'_{N( x_0,t)}(\bar x_0)- q_n, 
$$
which is a contradiction with the choice of $x_0$. Therefore we have $\vphi'_{N( x,t)}(\bar x)<0$ and, by Lemma~\ref{xze} and by the definition of $x_0$, we derive
\begin{equation}\label{exs2}|\vphi'_{N( x,t)}(\bar x)|\geq \frac{q_n^{3-\eta}}{4\log^5 q_n}|\bar x-\bar x_0|\geq \frac{2q_n^{3-\eta}}{\log^6 q_n}
 |\bar x-\bar x_0|.
\end{equation}
Then by \eqref{exs} and \eqref{exs2} and since $r_I\geq q_n^{3/2+\eta}$,  we get that \eqref{good} is satisfied with $x^*:=x_0$. This finishes the proof in Case 2.  and Lemma \ref{pr1} is established. \end{proof} 

The proof of Proposition~\ref{good.dec} is hence finished. \end{proof}

\subsection{Construction of the bad set $\mathcal{B}_l$}\label{been}

Recall that the partition $\cI_k$ is given by two towers i.e. disjoint sets of the form $\{B+i\a\}_{i=0}^{q_k}$ and $\{C+i\a\}_{i=0}^{q_{k-1}}$ where $B,C$ are intervals around $0$ of length $\|q_{k-1}\a\|, \|q_k\a\|$ respectively. Denote $D_1=B+\a, D_2=C+\a$ (the shift comes from the fact that we want to stay away from the singularity). The following construction works for $D=D_1,D_2$. We will present it for the tower above $D=D_1$, the other case being analogous. Consider a complete tower $\cD$ of height $H_k=q_k-1$ over $D$. Notice that $\cD\cap W$ is a union of horizontal intervals of length $\lambda (D)$. Moreover there is a natural order on horizontal intervals in $\cD\cap W$ (coming from the order on $\cD$): each interval in $\cD\cap W$ is of the form $D(h)$ for some $0\leq h\leq H_k$ (with $D(0)=D$). 

 Let $0\leq h_1\leq H_k$ be the smallest real number such that 
$D(h_{1})\subset \cD\cap W$ and $r_{D(h_{1})}\leq 2q_n^{3/2+\eta}$. Let $t_1 \in [l_0,l_1]$ and $x_1:= (\theta_1, s_1)\in D(h_1)$  be such that 
$$ T^{t_1}_{\a,\vphi}x_1\in W \text{  and  } \vphi'_{N(\theta_1,t_1)}(\theta_1)\leq 2q_n^{3/2+\eta}.
$$
Let $U_1$ be the complete tower of height $q_n^{3/5}$ over 
$B_1:=\left([-\frac{1}{q_n^{3/2-5\eta}}+\theta_1,\theta_1+\frac{1}{q_n^{3/2-5\eta}}]\times\{s_1\}\right)\cap \cD.$ 
Let $k_2$ be the largest number such that $D(k_2)\subset \cD \cap W$.

Now inductively  let $H_k\geq h_{i}\geq k_i$ be the smallest real number such that 
$D(h_{i})\subset \cD \cap W$ and $r_{D(h_{i})} \leq 2q_n^{3/2+\eta}$. Let $t_i \in [l_0,l_1]$ and $x_i:=(\theta_i, s_i)
 \in D(h_i)$   be such that 
\begin{equation}\label{txi} T^{t_i}_{\a,\vphi} x_i\in W \text{  and  } \vphi'_{N(\theta_i,t_i)}(\theta_i)\leq 2q_n^{3/2+\eta}.
\end{equation}
We define $U_i$ to be the complete tower of height $q_n^{3/5}$ over
$B_i:=\left([-\frac{1}{q_n^{3/2-5\eta}}+\theta_i, \theta_i+\frac{1}{q_n^{3/2-5\eta}}]\times \{s_i\} \right) \cap \cD.$

We continue this procedure until the last possible $h_m\leq H_k$ is defined.

Let us define
 \begin{equation}\label{def:b}
\mathcal B_l:=\bigcup_{1\leq i\leq m} U_i.
\end{equation}
Now, $(B_1)$ and $(B_3)$ follow by construction (notice that the top of $U_i$ is below the base of $U_{i+1}$).  Moreover, by Lemma \ref{koksi} we get that $\vphi_{q_k-1}(\a)\leq cq_{k+1}$, hence $(B_2)$ follows from 
 \begin{equation*}\mu(\mathcal B_l)\leq \vphi_{q_k}(\a) \lambda ( B_i)\leq \frac{1}{q_n^{1/2-6\eta}}.
\end{equation*} 

It remains to prove $(B_4)$ and $(B_5)$, which will be the subject of the next subsection.

\begin{figure}[htb] 
 \centering
  \resizebox{!}{6cm}{\includegraphics[angle=0]{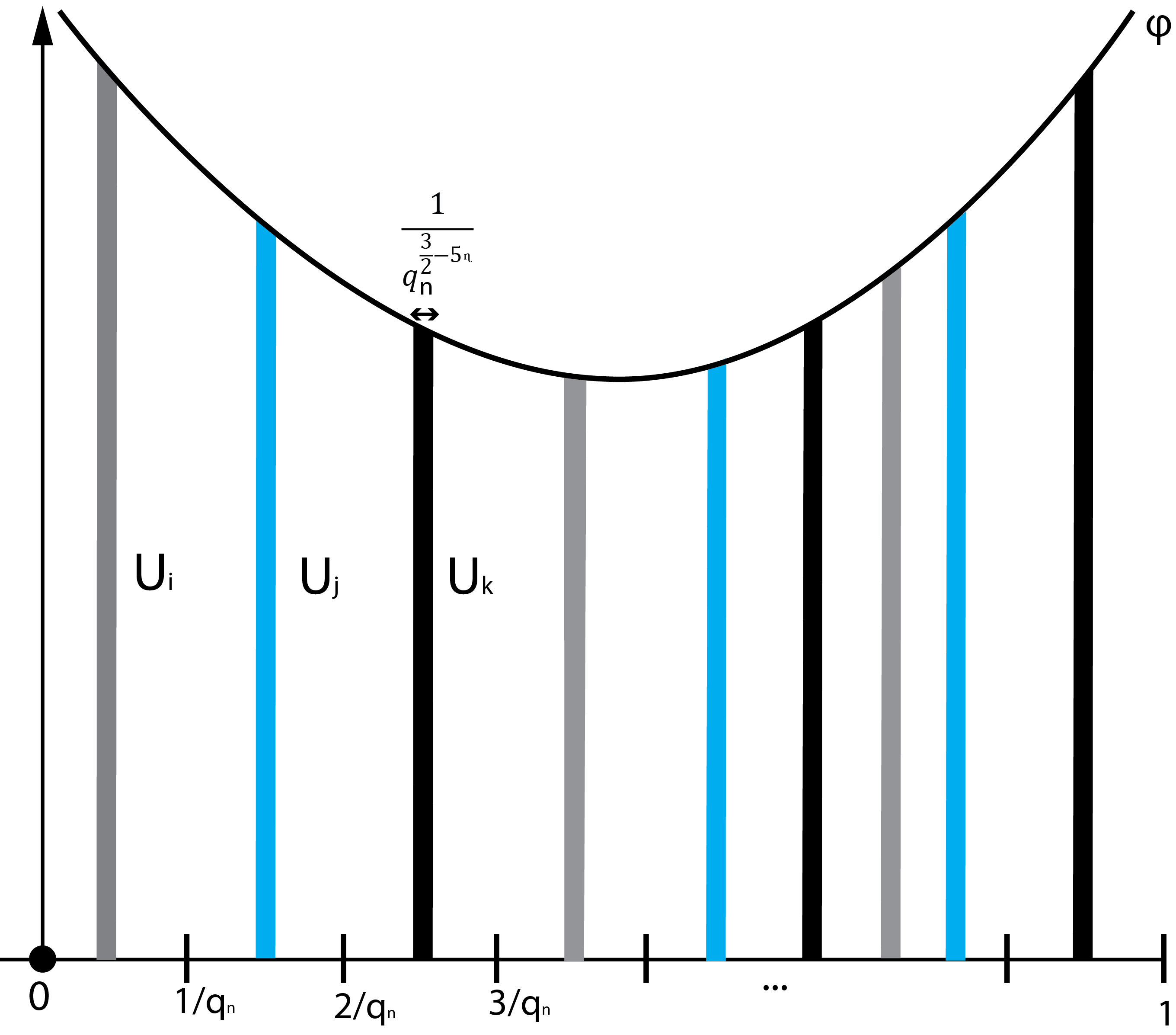}}
\caption{\small The set $\mathcal{B}_l$ is a union of complete towers $U_i$.} 
\label{wykres2}
\end{figure}

\subsection{Proving the properties of the bad set}\label{pr456}

In this section we give the proofs of $(B_4)$ and $(B_5)$ in Proposition \ref{prop.badset}

\medskip
\textbf{Proof of $(B_4)$.}
Fix $t\in [l_0,l_1]$. By the construction of $\mathcal B_l$, whenever for a partition interval $I\subset W$  we have $r^t_I\leq q_n^{3/2+\eta}$, 
then $$
I\cap \mathcal{B}_l=I_{bad},
$$
where $I_{bad}$ is a level of some $U_i$. In fact, otherwise $I\cap \mathcal{B}_l=\emptyset$ and by construction $r_I > 2 q_n^{3/2+\eta}$.
Therefore we need to show $(B_4.ii)$ for  $I\subset W$ such that  $I_{bad}\neq\emptyset$ and $r^t_I<q_n^{3/2+\eta}$.  Then, by definition,  there exists $x^t_{I}\in I$ such that  
\begin{equation}\label{smat}
T^t_{\a,\vphi}(x^t_{I})\in W\subset V
\text{  and  }
\vphi'_{N( x^t_I,t)}(\bar x^t_{I})\leq q_n^{3/2+\eta}.
\end{equation}
Notice that we have
\begin{equation}\label{sec.small}
u_I<q_n^{3-\eta}\log^9q_n.
\end{equation}
Indeed, if not, then by Lemma \ref{sec.fir} we would get by \eqref{xmin} and \eqref{eq:sec} that $\vphi'_{N( x^t_I,t)}(\bar x^t_I)\geq q_n^{2-\eta}$, which is a contradiction with \eqref{smat}.

Notice that by \eqref{smat} and \eqref{sec.small}, the assumptions of Corollary \ref{smder} are satisfied with $x_0=x^t_I$. Therefore, for every $x\in I$ such that $T^t_{\a,\vphi}(x)\in V$  and $|x-x^t_I|\geq \frac{1}{q_n^{3/2-3\eta}}$, we have 
\begin{equation}\label{fans}
|\vphi'_{N( x,t)}(\bar x)|\geq \frac{q_n^{3-\eta}}{2\log^5q_n}|\bar x-\bar x^t_I|.
\end{equation}
We claim that
\begin{equation}\label{dists}
|\bar x^t_I-\bar x_{bad}|\leq q_n^{-3/2+4\eta}.
\end{equation}

Now, \eqref{fans}, \eqref{sec.small} and \eqref{dists}  will finish the proof of  $(B_4.ii)$ since for $x\in J_1\sqcup J_2=I\setminus I_{bad}$, we have that  $|\bar x-\bar x_I^t| \geq |\bar x-\bar x_{bad}|-|\bar x^t_I-\bar x_{bad}| \geq q_n^{-3/2+3\eta}$.

Thus it only remains to show our claim~\eqref{dists}. By construction of  the $U_i$'s, for some $h>0$, we can write
$$x_{bad}=T^h_{\a,\vphi} x_i\,.$$
Moreover, since $U_i$ is a complete tower of height $q_n^{3/5}$ and $T^h_{\a,\vphi} x_i\in U_i$, we have that 
$$
h\leq \vphi_{N( x_i,q_n^{3/5})}(\bar x_i) + \vphi(\bar x_i+N( x_i,h)\a).
$$
Since $x_i\in W$, we get by the definition of special flow
$$\vphi_{N( x_i,q_n^{3/5})}(\bar x_i)\leq 2q_n^{3/5}.$$
Moreover, since $T^h_{\a,\vphi}x_i\in W$, we have 
$$\vphi(\bar x_i+N( x_i,h)\a)\leq 2q_n^{3/5}.
$$
By putting together the above bounds, we get
\begin{equation}\label{sm:h}
h<2q_n^{3/5+1/50}.
\end{equation}

Let $m_i:=\max(t_i,t)$. We will show that 
\begin{enumerate}
\item[a.] $T^{m_i}_{\a,\vphi}(T^h _{\a,\vphi}x_i), T^{m_i}_{\a,\vphi}(x_I^t)\in V$;
\item[b.] $|\vphi'_{N( x_I^t,m_i)}(\bar x^t_I)|\leq 2q_n^{3/2+\eta};$
\item[c.] $|\vphi'_{N(T^h_{\a,\vphi}x_i,m_i)}(\bar T^h_{\a,\vphi}x_i)|\leq 5q_n^{3/2+\eta}$.
\end{enumerate}
The above properties will give \eqref{dists} (and hence $(B4.ii)$), since if $|\bar T^h_{\a,\vphi}x_i-\bar x^t_I|\geq q_n^{-3/2+4\eta}$ then by \eqref{sec.small} and a., b., the assumptions of Corollary \ref{smder} are satisfied with $x_0=x^t_I$, $x=T^h_{\a,\vphi}x_i$ but then c. is in contradiction with estimate \eqref{large.first} stated there.  It remains then to show a., b.,c.\\

\smallskip
 For a. we notice that by \eqref{txi} and \eqref{smat} we have $T^{t_i}_{\a,\vphi}x_i,T^t _{\a,\vphi}x^t_I\in W$. Moreover, by the immediate bound
 $|m_i-t|\leq l_1-l_0<q_n^{1/10}$ and by \eqref{sm:h}, we have the estimate
\begin{equation}
\label{mt}
0\leq m_i-t,m_i-t_i+h\leq 2q_n^{3/5+1/50}+q_n^{1/10} \leq  3 q_n^{3/5+1/50}\,,
\end{equation}
from which we derive that 
$$
\{T^{m_i}_{\a,\vphi}(T^h _{\a,\vphi}x_i), T^{m_i}_{\a,\vphi}(x_I^t)\}=
\{T^{m_i-t_i+h}_{\a,\vphi}(T^{t_i}_{\a,\vphi}x_i),T^{m_i-t}_{\a,\vphi}(T^t_{\a,\vphi}x_I^t)\} \subset V.
$$
This gives a.  

\smallskip
For b. we first notice that since $T^t_{\a,\vphi}(x^t_I)\in W$ and $|m_i-t|\leq l_1-l_0<q_n^{1/10}$, by \eqref{con:ele},
we have 
$$
\vphi'_{N( x^t_I,m_i-t)}(\bar T^t_{\a,\vphi}(x^t_I))|\leq q_n^{3/2+\eta}
$$
and by \eqref{smat}, $|\vphi'_{N( x^t_I,t)}(x^t_I)|\leq q_n^{3/2+\eta}$. By the cocycle identity, we then have 
$$
|\vphi'_{N( x^t_I,m_i)}(x^t_I)|\leq |\vphi'_{N(x^t_I,t)}(\bar x^t_I)|+|\vphi'_{N(x^t_I,m_i-t)}(\bar T_{\a,\vphi}^t(x^t_I))|\leq 2q_n^{3/2+\eta}.
$$
This gives b. 

\smallskip
For c., by cocycle identity, \eqref{txi}, \eqref{mt}  and \eqref{con:ele} (for $T^{t_i}_{\a,\vphi}(x_i)\in W$), we get 
\begin{equation}\label{ew1}
|\vphi'_{N( x_i,m_i+h)}(\bar x_i)|\leq |\vphi'_{N( x_i,t_i)}(\bar x_i)|+
|\vphi'_{N(T_{\a,\vphi}^{t_i}(x_i),m_i+h-t_i)}(\bar T^{t_i}_{\a,\vphi}(x_i))|\leq 2q_n^{3/2+\eta}.
\end{equation}
Since $x_i\in W$, by \eqref{sm:h} and \eqref{con:ele}, we have 
\begin{equation}\label{ew2}
|\vphi'_{N( x_i,h)}(\bar x_i)|\leq 2q_n^{3/2+\eta}.
\end{equation}
Finally from the  cocycle identity, \eqref{ew1} and \eqref{ew2} we conclude that 
$$
|\vphi'_{N(T^h_{\a,\vphi}x_i,m_i)}(\bar T^h_{\a,\vphi}x_i)|\leq |\vphi'_{N( x_i,m_i+h)}(\bar x_i)| +
| \vphi'_{N( x_i,h)}(\bar x_i)|\leq 5q_n^{3/2+\eta}.
$$
This finishes the proof of c. and hence also $(B4.ii)$.

\medskip
\textbf{Proof of $(B_5)$.} 

Let $s_i$ be such that $x_i  \in D(h_i) \subset  \T \times \{s_i\}$  ($D(h_i)$ is the base of $U_i$).
Let $\bt \in[t,t-1]$ be such that for  $z_{t,i} =(\theta_{t,i}, s_{t,i}):=T_{\a,\varphi}^{\bt}x_i$ we have
$$B_{t,i}:=[\theta_{t,i}-\frac{1}{q_n^{3/2-5\eta} },\theta_{t,i}+\frac{1}{q_n^{3/2-5\eta}}] \times \{s_{t,i}\}\subset M,$$

Let $h_{t,i}:=\vphi_{N( x_i,q_n^{3/5})}(x_i)-s_i- (\bt-t)$ and let $\cT_{t,i}$ be the complete tower of height $h_{t,i}$ over $B_{t,i}$. Notice that $s_i\leq q_n^{3/5(1-\eta)}$ and $\vphi_{N( x_i,q_n^{3/5})}(x_i)\geq q_n^{3/5}\log^{-10}q_n$ (by \eqref{koks0}), hence~$h_{t,i}\geq q_n^{3/5-1/50}$.

The difference between $U_i\cap M_\zeta$ and $T^{t}_{\a,\vphi}(\cT_{t,i})\cap M_\zeta$ will come from the stretching of Birkhoff sums of the top and at the base of $\cT_{t,i}$ and from the difference $|\bt-t|\leq 1$. The measure of the symmetric difference between the two sets is twice the maximal stretching times the measure of the base of $\cT_{t,i}$. First let us estimate the maximal stretch. 

For any $z\in B_{t,i}$ there exists 
$\xi_i\in [\bar z, \theta_{t,i}]$ such that 
\begin{equation}\label{le}
|\vphi_{N(\theta_{t,i},t)}(\bar z)-\vphi_{N(\theta_{t,i},t)}(\theta_{i,t})|\leq 
|\vphi'_{N(\theta_{t,i},t)}(\xi_i)||\bar z-\theta_{t,i}|
\end{equation}
Since $t<q_{n+1}$, it follows that for $j=0,\dots,N(\theta_{t,i},t)-1$, we have $\theta_{t,i}+j\a \notin [-\frac{1}{q_n\log^{100}q_n},\frac{1}{q_n\log^{100}q_n}]$ and since  $|\xi_i-\theta_{t,i}|<\frac{1}{q_n^{3/2-5\eta}}$, it follows that for $j=0,\dots,N(\theta_{t,i},t)-1$, we have
$$
\xi_i+j\a\notin \left[-\frac{1}{2q_n\log^{100}q_n},\frac{1}{2q_n\log^{100}q_n}\right].
$$
By the above condition and by \eqref{koks0}, we derive from \eqref{le} the bound
$$|\vphi_{N(\theta_{t,i},t)}(\bar z)-\vphi_{N(\theta_{t,i},t)}(\theta_{t,i})|\leq q_n^{1/2+3\eta}.
$$
Therefore, 
$$\mu((T^{t}_{\a,\vphi}(U_i)\triangle \cT_{t,i})\cap M_\zeta)\leq  \lambda ( B_i) (4q_n^{1/2+3\eta}+|t-\bt|) \leq q_n^{-1+10\eta}.$$

This finishes the proof of $(B5)$ and hence also of Proposition \ref{prop.badset}.

\bg 
\section{From uniform stretching of Birkhoff sums to decay of correlations} \label{sec.correlations}
\blk

\subsection{Uniform stretching of Birkhoff sums and correlations} \label{sec.uniformstretchandcorrelations}

We will adopt below the following notation. 

For all $f\in \cF$ with transfer function $\phi$ and $g \in C^1(M)$, let
$$
\Nzero(f,g) := \Vert \phi \Vert_0 \Vert  g \Vert_0  \quad \text{and} \quad \None(f,g):= 
(\Vert f \Vert_0 +\Vert \phi \Vert_0) \Vert g \Vert_1 +  (\Vert f \Vert_1 +\Vert \phi \Vert_1) \Vert g \Vert_0\,,
$$
where $\Vert \cdot \Vert_0$ and $\Vert \cdot \Vert_1$ respectively denote the $C^0$ and the $C^1$ norm. Moreover, we will denote by the letter
$C$ a generic constant which depends only on on the rotation number $\a$ and on the
ceiling function $\vphi$.

In all what follows $I$ denotes any interval of the partition of $W$ defined in Section \ref{sec.partition}. 

Our main result in this section is the following relation between uniform stretching of the Birkhoff sums and decay of correlations. 

Let us recall the following notation (see \eqref{rti}). For any interval $J \subset I$ denote 
\begin{equation}\label{aj}
r^t_J:=\inf_{x\in J_t}|\vphi'_{N( x,t)}(\bar x)|,
\end{equation}
where $J_t:=J\cap T^{-t}_{\a,\vphi}W$ ($r^t_J=+\infty$ if $J_t=\emptyset$).

\begin{proposition}\label{rem.cse2} For any interval $J=[z,w]\times\{s\} \subset I$, we have 
the following estimate:  
$$
\left|\int_{\bar J}f(T^t_{\a,\vphi}(\theta,s))g(\theta,s)d \theta-p(z,w)\right|\leq  C\left\{ \Nzero(f,g) \frac{\lambda ( J)}{S^t_J}  + 
\None(f,g) \frac{\lambda ( J)}{r^t_J} \right\},$$
where $p(z,w)=\frac{g(z,s)\phi(T_{\a,\vphi}^t(z,s))}{\vphi'_{N(z,t)}(z)}
-\frac{g(w,s)\phi(T_{\a, \vphi}^t(w,s)}{\vphi'_{N(w,t)}(w)}$.
\end{proposition}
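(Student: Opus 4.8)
The plan is to use the fact that $f$ is a coboundary with transfer function $\phi$ to integrate by parts, trading the oscillation of $f\circ T^t$ against the stretching of the Birkhoff sums. First I would change variables along the orbit: on the horizontal interval $\bar J = [z,w]$, the map $\theta \mapsto T^t_{\a,\vphi}(\theta,s)$ moves in the flow direction by an amount governed by $-\vphi_{N(\theta,t)}(\theta)$ (cf. the formula $T^t(\theta,s)=(\theta+N\a, t+s-\vphi_N(\theta))$ in Section \ref{sec.decay}), so that $\frac{d}{d\theta}$ of the flow-time coordinate equals $-\vphi'_{N(\theta,t)}(\theta)$ on each sub-interval where $N(\theta,t)$ is constant (recall by \eqref{dif} that $\vphi_{N(\cdot,t)}$ is genuinely $C^2$ on $I$, away from the singularity). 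Since $f = \frac{d}{du}\phi$ along the flow, we have $f(T^t_{\a,\vphi}(\theta,s)) = \frac{1}{-\vphi'_{N(\theta,t)}(\theta)}\cdot\frac{d}{d\theta}\big(\phi(T^t_{\a,\vphi}(\theta,s))\big)$ wherever $\vphi'_{N(\theta,t)}(\theta)\ne 0$; this is exactly where the lower bound $r^t_J$ on $|\vphi'_{N(\cdot,t)}|$ enters, ensuring the denominator does not vanish on $J_t$. (On the part of $J$ where $T^t_{\a,\vphi}(\theta,s)\notin W$ one should note $g$ or the relevant cutoff makes the contribution controlled, or handle it via the definition $r^t_J=+\infty$; this needs a remark.)

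Next I would write $\int_{\bar J} f(T^t_{\a,\vphi}(\theta,s)) g(\theta,s)\, d\theta = \int_{\bar J} \frac{g(\theta,s)}{-\vphi'_{N(\theta,t)}(\theta)}\, \frac{d}{d\theta}\big(\phi(T^t_{\a,\vphi}(\theta,s))\big)\, d\theta$ and integrate by parts. The boundary term produces exactly $p(z,w) = \frac{g(z,s)\phi(T^t_{\a,\vphi}(z,s))}{\vphi'_{N(z,t)}(z)} - \frac{g(w,s)\phi(T^t_{\a,\vphi}(w,s))}{\vphi'_{N(w,t)}(w)}$ (the sign working out from the $-\vphi'$ in the denominator). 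The remaining integral is $-\int_{\bar J} \phi(T^t_{\a,\vphi}(\theta,s))\, \frac{d}{d\theta}\!\left(\frac{g(\theta,s)}{-\vphi'_{N(\theta,t)}(\theta)}\right) d\theta$, and the derivative splits into two pieces: one where $\frac{d}{d\theta}$ hits $g$, giving a term bounded by $\Vert\phi\Vert_0 \Vert g\Vert_1 \lambda(\bar J)/r^t_J$; and one where it hits $1/\vphi'_{N(\theta,t)}$, giving $\phi(T^t_{\a,\vphi})\cdot g \cdot \frac{\vphi''_{N(\theta,t)}(\theta)}{(\vphi'_{N(\theta,t)}(\theta))^2}$, bounded by $\Vert\phi\Vert_0\Vert g\Vert_0 \lambda(\bar J) \sup_{x\in J_t}\frac{|\vphi''_{N(x,t)}|}{(\vphi'_{N(x,t)})^2} = \Vert\phi\Vert_0\Vert g\Vert_0\lambda(\bar J)/S^t_J$ by the definition \eqref{mj} of $S^t_J$. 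Collecting, the first piece feeds the $\Nzero(f,g)/S^t_J$ term and the second the $\None(f,g)/r^t_J$ term (together with boundary-adjacent estimates); after absorbing universal factors into $C$ one obtains the stated bound.

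The main obstacle I anticipate is the bookkeeping across the discontinuities of $N(\theta,t)$: as $\theta$ ranges over $\bar J$, the integer $N(\theta,t)$ jumps at finitely many points, so one must either argue that $\phi(T^t_{\a,\vphi}(\theta,s))$ is continuous across these jumps (it is, since it is a genuine function on the flow space, and at a jump the two one-sided limits correspond to the same point of $M$ via the identification \eqref{FlowSpace}, so the telescoping boundary terms from adjacent sub-intervals cancel) and reduce to the two genuine endpoints $z,w$; or, more robustly, work directly on the flow space. Care is also needed to confirm that $\vphi_{N(\cdot,t)}$ is $C^2$ on all of $I$ — which is precisely \eqref{dif} and the remark following it — so that the integration by parts is legitimate and $\vphi''_{N(\cdot,t)}$ makes sense pointwise. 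Finally one should check that the portions of $\bar J$ mapping outside $W$ (where $g$ need not vanish but where $r^t_J$, $S^t_J$ are defined via the restriction $J_t = J\cap T^{-t}_{\a,\vphi}W$) do not contribute, which follows since $g\in C^1_0(M_\zeta)$ and $M_\zeta\subset W$, so $f(T^t_{\a,\vphi}(x))g(x)$ is supported where it should be; this dovetails with the conventions already fixed in Section \ref{sec.decay}.
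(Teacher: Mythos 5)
Your overall strategy---trading $f\circ T^t_{\a,\vphi}$ for the transfer function $\phi$ by an integration by parts against the shear $-\vphi'_{N(\cdot,t)}$---is the same as the paper's, which simply discretizes first: it cuts $J$ into subintervals $\Ji$ on which $N(\cdot,t)$ is constant, proves the local estimate of Lemma \ref{prop.bI} by linearizing $\vphi_N$ and changing variables, and then performs a discrete Abel summation to produce $p(z,w)$ and the $\lambda(\bar J)/S^t_J$ term. However, two steps of your continuous version are wrong as written. First, the chain rule gives
$\frac{d}{d\theta}\,\phi\bigl(T^t_{\a,\vphi}(\theta,s)\bigr)=\partial_\theta\phi\bigl(T^t_{\a,\vphi}(\theta,s)\bigr)-\vphi'_{N(\theta,t)}(\theta)\,f\bigl(T^t_{\a,\vphi}(\theta,s)\bigr)$,
because moving $\theta$ also translates the horizontal coordinate of the image point; your identity drops $\partial_\theta\phi(T^t_{\a,\vphi})$. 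This is repairable: after dividing by $-\vphi'_{N}$ the extra term contributes at most $C\Vert\phi\Vert_1\Vert g\Vert_0\,\lambda(\bar J)/r^t_J$, which fits inside $\None(f,g)\lambda(\bar J)/r^t_J$ (indeed this is one reason $\None$ contains $\Vert\phi\Vert_1\Vert g\Vert_0$), but it must be carried along.

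The genuine gap is your claim that the internal boundary terms at the jump points of $N(\theta,t)$ cancel by continuity of $\phi\circ T^t_{\a,\vphi}$. The product $g\cdot\phi(T^t_{\a,\vphi})$ is indeed continuous across such a point $\theta^*$, but the weight $1/\vphi'_{N(\theta,t)}(\theta)$ is not: when $N$ increments, $\vphi'_{N+1}(\theta^*)=\vphi'_{N}(\theta^*)+\vphi'(\theta^*+N\a)$, and $\vphi'(\theta^*+N\a)$, the derivative of the roof over the fiber being crossed, is not small in general. Each junction therefore leaves a contribution $g(\theta^*,s)\,\phi\bigl(T^t_{\a,\vphi}(\theta^*,s)\bigr)\bigl(\tfrac{1}{\vphi'_{N+1}(\theta^*)}-\tfrac{1}{\vphi'_{N}(\theta^*)}\bigr)$, and there is roughly one junction for every fiber crossed by the image of $J$, so these terms can be numerous. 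They are exactly what the paper's discrete summation by parts isolates, namely the sum $\sum_i\bigl(\tfrac{1}{r^t_i}-\tfrac{1}{r^t_{i+1}}\bigr)\Theta_{i+1}$, which it bounds by $\Nzero(f,g)\lambda(\bar J)/S^t_J$ via $|r^t_{i+1}-r^t_i|\leq 2B^t_i\lambda(\bar\Ji)$; in your scheme the corresponding estimate is simply absent, because you have declared these contributions to be zero. Without it (in addition to the in-interval $\vphi''_N/(\vphi'_N)^2$ term, which you do treat and which gives the other part of the $\lambda(\bar J)/S^t_J$ bound), the argument does not reach the stated inequality. A smaller point to make explicit: after the integration by parts your error terms involve suprema over all of $\bar J$, whereas $r^t_J$ and $S^t_J$ in \eqref{rti} and \eqref{mj} are infima over $J_t=J\cap T^{-t}_{\a,\vphi}W$ only, and $\phi(T^t_{\a,\vphi})$, unlike $f(T^t_{\a,\vphi})$, is not supported there; the paper handles this by assuming the derivative lower bound on all of $J$, and you should state the analogous hypothesis rather than appeal to the support of $g$.
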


To prove Proposition \ref{rem.cse2}, we will need the following lemma that encloses the main estimate  
on the correlation of coboundaries based on the stretching of the Birkhoff sums of the roof function.

 Let $\J:=[u,v] \times \{s\} \subset J$ be such that $v-u\leq t^{-10}$.

\begin{lemma} \label{prop.bI}  Let $r^t_u:=-\vphi_{N(u,t)}'(u)$. For all $f\in \cF$ and for 
all $g\in C^1_0(M)$ and  for all $t>0$ we have
\begin{equation}  \label{eq.bI} \left| \int_{\bar \J} f(T_{\a,\vphi}^t(\theta,s)) g(\theta,s) d \theta  - \Delta(\J,t)  \right| \leq C \None(f,g)
 \frac{\lambda ( \J)}{r^t_I  },  \end{equation}
where $\None(f,g)= (\Vert f \Vert_0 +\Vert \phi \Vert_0) \Vert g \Vert_1 +  (\Vert f \Vert_1 +\Vert \phi \Vert_1) \Vert g \Vert_0$ and 
 
$$\Delta(\J,t) := \frac{1}{r^t_u} \left[ g(v,s) \phi(T_{\a,\vphi}^t(v,s)) - g(u,s) \phi(T_{\a,\vphi}^t(u,s)) \right].$$
\end{lemma}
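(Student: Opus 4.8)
The plan is to integrate by parts using the fact that $f$ is a coboundary with transfer function $\phi$, and to exploit the change of variables $\theta \mapsto \vphi_{N(\theta,t)}(\theta)$ along the base interval $\bar\J$. Since $f$ is a smooth coboundary, for each fixed $\theta$ the map $u \mapsto \phi(T^u_{\a,\vphi}(\theta,s))$ has derivative $f(T^u_{\a,\vphi}(\theta,s))$ in $u$; the key observation is that moving $\theta$ horizontally changes the time at which the orbit reaches a given fiber position, and the rate of that change is governed by the derivative of the Birkhoff sum $\vphi_{N(\theta,t)}$. Concretely, I would write $f(T^t_{\a,\vphi}(\theta,s)) = \partial_t \phi(T^t_{\a,\vphi}(\theta,s))$ and then re-express the $t$-derivative in terms of a $\theta$-derivative: on $\bar\J$ the integer $N(\theta,t)$ is locally constant off a finite set (since $v-u \le t^{-10}$ is tiny, $N$ takes at most a bounded number of values on $\bar\J$, and the correction from the jumps is absorbed into the error term), so on each piece where $N$ is constant, $\frac{d}{d\theta}\phi(T^t_{\a,\vphi}(\theta,s)) = (\partial_\theta \phi)(T^t_{\a,\vphi}(\theta,s)) - \vphi'_{N(\theta,t)}(\theta)\,(\partial_s\phi)(T^t_{\a,\vphi}(\theta,s))$.

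Next I would rearrange this identity to solve for $(\partial_s\phi)(T^t_{\a,\vphi}(\theta,s))$, which up to the factor $-1/\vphi'_{N(\theta,t)}(\theta) = 1/r^t_u$ (approximately, since $v-u$ is so small that $\vphi'_{N(\cdot,t)}$ is essentially constant $= -r^t_u$ on $\bar\J$, by the second-derivative control implicit in working on $I \subset W$) equals $\frac{1}{\vphi'_{N(\theta,t)}(\theta)}\big[(\partial_\theta\phi)(T^t_{\a,\vphi}(\theta,s)) - \frac{d}{d\theta}\phi(T^t_{\a,\vphi}(\theta,s))\big]$. But $f(T^t_{\a,\vphi}(\theta,s))$ itself is $\partial_t$ of $\phi \circ T^t$, and one checks that along the flow the relevant combination produces exactly $f(T^t)\,g$ after multiplying by $g(\theta,s)$; then integrating the term $\frac{d}{d\theta}\phi(T^t_{\a,\vphi}(\theta,s))$ against $g(\theta,s)/\vphi'_{N(\theta,t)}(\theta)$ over $\bar\J$ and integrating by parts yields the boundary term $\Delta(\J,t) = \frac{1}{r^t_u}[g(v,s)\phi(T^t_{\a,\vphi}(v,s)) - g(u,s)\phi(T^t_{\a,\vphi}(u,s))]$, while the remaining terms are: (i) the integral of $\phi(T^t)$ against $\frac{d}{d\theta}\big(g/\vphi'_{N}\big)$, controlled by $\Vert\phi\Vert_0(\Vert g\Vert_1/r^t_I + \Vert g\Vert_0 \cdot u_I/(r^t_I)^2)\lambda(\bar\J)$; and (ii) the integral of $(\partial_\theta\phi)(T^t)\,g/\vphi'_N$, controlled by $\Vert\phi\Vert_0\Vert g\Vert_0 \lambda(\bar\J)/r^t_I$. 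The appearance of $u_I/(r^t_I)^2$ in (i) is harmless because on $I \subset W$ we have $u_I \lesssim (r^t_I)^2$ up to the $S^t_I$-type control, so each such term is $\le C\None(f,g)\lambda(\bar\J)/r^t_I$; collecting everything gives \eqref{eq.bI}.

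The main obstacle I expect is bookkeeping the places where $N(\theta,t)$ jumps as $\theta$ ranges over $\bar\J$ and making sure these contribute only to the error term of the stated order: at such a jump the point $T^t_{\a,\vphi}(\theta,s)$ crosses the identification $(\theta',s'+\vphi(\theta')) \sim (R_\a\theta', s')$, and one must verify that $\phi$ (being globally defined and continuous on $M$, vanishing near $\partial M$) is consistent across this gluing, so that no genuine boundary contribution is created — only the two genuine endpoints $u,v$ survive. A secondary point is justifying that $v-u \le t^{-10}$ makes $\vphi'_{N(\cdot,t)}$ close enough to the constant $-r^t_u$ that replacing $1/\vphi'_{N(\theta,t)}(\theta)$ by $1/r^t_u$ in the boundary term costs only $O(\None(f,g)\lambda(\bar\J)/r^t_I)$; this uses the bound $|\vphi''_{N(\cdot,t)}| \le u_I$ together with $\lambda(\bar\J) \le t^{-10}$ and the crude lower bound $r^t_I \ge c$, which together make the variation of $1/\vphi'_N$ across $\bar\J$ negligible. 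Both points are routine given the Birkhoff-sum estimates of Section \ref{est.birk}, so the heart of the lemma is the integration-by-parts identity itself.
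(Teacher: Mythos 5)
Your overall strategy (exploit $f(T^t_{\a,\vphi}(\theta,s))=(\partial_s\phi)(T^t_{\a,\vphi}(\theta,s))$ and trade the $t$-derivative for a $\theta$-derivative through $\vphi'_{N}$) is in the right spirit, but the specific execution -- integration by parts against the \emph{variable} weight $g(\theta,s)/\vphi'_{N}(\theta)$ -- does not yield the bound \eqref{eq.bI}. Differentiating that weight, and likewise replacing $1/\vphi'_N(v)$ by $1/r^t_u$ in the boundary term, produces an error of size
$\Vert\phi\Vert_0\Vert g\Vert_0\,\sup_{\J}|\vphi''_N|\,\lambda(\bar\J)/(r^t_\J)^2$,
i.e.\ an error of order $\Nzero(f,g)\,\lambda(\bar\J)/S^t_{\J}$. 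Your claim that this is harmless because $u_I\lesssim (r^t_I)^2$ is where the argument breaks: that inequality only gives a bound of order $\Nzero(f,g)\lambda(\bar\J)$, which is missing the crucial factor $1/r^t_I$. To absorb this term into $C\,\None(f,g)\lambda(\bar\J)/r^t_I$ you would need $\sup|\vphi''_N|\leq C\, r^t_I$, which fails badly in the regime where the lemma is actually used: e.g.\ $r^t_I\sim q_n^{3/2+\eta}$ while $u_I$ can be of order $q_n^{3-\eta}\log^9 q_n$, so your error exceeds the claimed one by a factor $\sim q_n^{3/2}$. (A secondary issue: if $N(\cdot,t)$ really jumped inside $\bar\J$, the weight $1/\vphi'_N$ would be discontinuous there even though $\phi\circ T^t_{\a,\vphi}$ is continuous, so the integration by parts would create additional interior boundary terms that your continuity argument does not dispose of; in the paper's setup this is moot because the lemma is applied only to pieces on which $N(\cdot,t)$ is constant.)

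The paper's proof avoids the $\vphi''$-problem entirely by never differentiating the normalizing factor: it freezes $g$ at $u$ and the first coordinate at $\tilde v$, linearizes the Birkhoff sum via $|\vphi_N(u)-\vphi_N(u+\theta)-r^t_u\theta|\leq B^t_I\theta^2$, changes variables, and applies the fundamental theorem of calculus for $\phi$ along the fiber. The second derivative then only ever enters multiplied by $\lambda(\bar\J)^2\leq t^{-20}$ (and by $\Vert f\Vert_1$ or $\Vert\phi\Vert_1$), which is negligible precisely because of the hypothesis $v-u\leq t^{-10}$; the $\Nzero\,\lambda/S^t$ contribution is deliberately deferred to Proposition \ref{rem.cse2}, where it arises from telescoping the boundary terms $\Delta(\Ji,t)$ across consecutive subintervals via $\sum_i|r^t_{i+1}-r^t_i|/(r^t_i r^t_{i+1})$. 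It is worth noting that the weaker estimate your method actually proves, namely an error of $C(\None\lambda(\bar\J)/r^t_\J+\Nzero\lambda(\bar\J)/S^t_\J)$, is exactly of the form of Proposition \ref{rem.cse2} and would still suffice for everything downstream; but as a proof of Lemma \ref{prop.bI} as stated it has a genuine gap, and the fix is essentially to freeze the weight at a single point \emph{before} integrating by parts -- at which point you recover the paper's argument.
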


\begin{proof}
Let $I \subset W \cap (\T \times \{s\})$ be a horizontal  interval  as in Section~\ref{sec.partition}.
 Let $\J=[u,v]\subset I$ such that $v_0-u_0\leq t^{-10}$. If $T_{\a,\vphi}^{-t}\J\subset W^c$ then Lemma \ref{prop.bI} holds trivially. We use the notation 
 $$T_{\a,\vphi}^t(u,s)=(\tu,\ts)=(u+N(u,t)\a,t+s-\vphi_{N(u,t)}(u))\,,$$ 
 where $0\leq \ts \leq \vphi(u+N(u,t) \a)$. We also denote $\tv = v+N(u,t)\a$. 

In the remainder of this proof we will denote for simplicity the integer $N(u,t)$ by $N$. 
We will suppose that $r^t_u=-\vphi_{N}'(u) \geq r^t_I \geq 0$, the case where $r^t_I<0$ being similar. Let us also denote 
$$B^t_I:=\sup_{\theta\in I}\vphi ''_N(\theta).$$

We will use the notation $X = O(Y)$ if there exists a constant $C>0$ such that $X \leq  C Y$.

We have for $\theta \in [0,\lambda (\J)]$ that $T_{\a,\vphi}^t(u+\theta,s) = (\tu+\theta, \ts+\vphi_N(u)-\vphi_N(u+\theta))$. By the intermediate value theorem, since $r^t_I \ll \lambda ( \J)^{-1}$, we have 
\begin{align*}
 \int_{\bar \J} f(T_{\a,\vphi}^t(\theta,s)) &g(\theta,s) d \theta = \int_0^{\lambda ( \J)}  f (\tu+\theta, \ts+\vphi_N(u)-\vphi_N(u+\theta)) g(u+\theta,s) d \theta \\
= &g(u,s)  \int_0^{\bar \lambda (\J)}  f (\tu+\theta, \ts+\vphi_N(u)-\vphi_N(u+\theta))  d \theta +O(\Vert f\Vert_0  \Vert g \Vert_1  \frac{\lambda (\J)}{r^t_I} ). \end{align*}
Now,  since  $ \vphi_N(u)-\vphi_N(u+\theta)\ll1 $ we also have  
$$
\begin{aligned}
   \int_0^{\lambda (\J)}  &f (\tu+\theta, \ts+\vphi_N(u)-\vphi_N(u+\theta))  d \theta \\ =  &\int_0^{\lambda (\J)}  f (\tv, \ts+\vphi_N(u)-\vphi_N(u+\theta)) d \theta+O( \Vert f\Vert_1  \frac{ \lambda ( \J)}{r^t_I}),
 \end{aligned}
 $$
and by the definition of $B^t_I$, we have $|\vphi_N(u)-\vphi_N(u+\theta) - r^t_u \theta | \leq B^t_I \theta^2$. Therefore,
$$
\begin{aligned}
 \int_{\bar \J} f(T_{\a,\vphi}^t(\theta,s)) g(\theta,s) d \theta &= g(u,s)  \int_0^{\lambda (\J)}  f (\tv, \ts+r^t_u\theta)  d \theta  \\
 & +O(\|f\|_0\|g\|_1\frac{\lambda (\J)}{r^t_I})+
O(\|f\|_1\|g\|_0\frac{\lambda ( \J)}{r^t_I}).
\end{aligned}
$$
For simplicity let us denote $w(f,g):= \|f\|_0\|g\|_1 +\|f\|_1\|g\|_0$. A change of variable  then gives
\begin{align*}
 \int_{\bar \J} f(T_{\a,\vphi}^t(\theta,s)) g(\theta,s) d \theta&= \frac{1}{r^t_u}  g(u,s)  \int_0^{r^t_u \lambda ( \J)}  f (\tv, \ts+\theta)  d \theta + 
 O(w(f,g)\frac{\lambda ( \J)}{ r^t_I}) \\
 &= \frac{1}{r^t_u}  g(u,s)  \left[\phi( \tv,\ts+r^t_u\lambda (\J))-\phi(\tv,\ts)\right] + O(w(f,g)\frac{\lambda (\J)}{r^t_I}) \end{align*}
 but $T_{\a,\vphi}^t (v,s)=(\tv,\ts+\vphi_N(u)-\vphi_N(v))=(\tv,\ts+r^t_u \lambda ( \J ) +\cE)$ with $\cE \leq B^t_I \lambda ( \J)^2$, hence 
\begin{align*}
 \int_{\bar \J} f(T_{\a,\vphi}^t(\theta,s)) g(\theta,s) d \theta &= \frac{1}{r^t_u}  g(u,s)  \left[\phi(T_{\a,\vphi}^t (v,s))-\phi(\tv,\ts)\right] \\
 &\qquad\qquad+O(w(f,g)\frac{\lambda ( \J)}{r^t_I} + \|g\|_0\|\phi\|_1\frac{\lambda ( \J)}{r^t_I})  \\
&= \frac{1}{r^t_u}   \left[g(v,s) \phi(T_{\a,\vphi}^t (v,s))-g(u,s) \phi(T_{\a,\vphi}^t(u,s))\right] \\ 
&\qquad\qquad+O(\None(f,g)\frac{\lambda ( \J)}{r^t_I})\,,
\end{align*}
which is precisely formula~\eqref{eq.bI}.  \end{proof}

\begin{proof}[Proof of Proposition \ref{rem.cse2}]
Since the proof is symmetric for $t>0$ and $t<0$, from now  on we will assume that $t>0$.
If $T_{\a,\vphi}^{t}(J)\subset W^c$, then Proposition \ref{rem.cse2} holds trivially. We assume for definiteness that $-\vphi_{N(u,t)}'(u) \geq r^t_J$ on $J$.
Let us decompose $J$ into finitely many subintervals $J=\bigcup_{i=1}^m \Ji$ such that $\Ji=[u_i,u_{i+1})\times \{s\}$ with $|u_{i+1}-u_i|\leq t^{-10}$, and so that $N(\cdot,t)$ is constant on each $\Ji$. 

Then 
\begin{equation}\label{fgh}
\int_{\bar J} f(T_{\a,\vphi}^t(\theta,s)) g(\theta,s) d \theta = \sum_{i=1}^m  \int_{\Ji} f(T_{\a,\vphi}^t(\theta,s)) g(\theta,s) d \theta 
= \sum_{i=1}^m  \Delta(\Ji,t) + \cE\,,
\end{equation}
where, by \eqref{eq.bI}
$$
\cE \leq \None(f,g) \frac{\lambda ( J)}{r^t_J}\,.
$$
Notice that if  $T_{\a, \vphi}^{t}(\Ji)\subset W^c$ then the corresponding integral in \eqref{fgh} is $0$. Therefore we only have to consider those $\Ji$ for which $T_{\a,\vphi}^{-t}(\Ji)\nsubseteq W^c$. By enumeration let us assume that this is the case for all $\Ji$.

Let us  denote $r^t_i:= -\vphi_{N(u_i,t)}'(u_i)$ and $\Theta_i:=  g(u_i,s) \phi(T_{\a,\vphi}^t(u_i,s))$. We then have
\begin{align*} \vert \sum_{i=1}^m  \Delta(\Ji,t)- p(z,w)\vert &= \vert \sum_{i=1}^m  \frac{1}{r^t_i}(\Theta_{i+1}-\Theta_i) - p(z,w)\vert \\ &= \vert \frac{1}{r^t_m} \Theta_{m+1}- \frac{1}{r^t_1} \Theta_{1} + \sum_{i=1}^{m-1} \left(  \frac{1}{r^t_i} -\frac{1}{r^t_{i+1}} \right) \Theta_{i+1} -p(z,w)\vert \\ 
&= \vert\sum_{i=1}^{m-1} \left(  \frac{1}{r^t_i} -\frac{1}{r^t_{i+1}} \right) \Theta_{i+1}\vert\leq \Vert \phi \Vert_0  \Vert g \Vert_0 \left(\frac{1}{r^t_J}+ \sum_{i=1}^{m-1}  \frac{\left| r^t_{i+1} -r^t_i \right|}{r^t_{i+1}r^t_{i}}  \right)\,.
 \end{align*} 

To estimate the quantity $ \sum_{i=1}^{m-1}  \frac{\left| r^t_{i+1} -r^t_i \right|}{r^t_{i+1}r^t_{i}}$,
by the choice of $(u_i)_{i=1}^m$ (since $N(\cdot,t)$ is constant on $\Ji$) and $u_{i+1}-u_i\leq t^{-10}$, we get 
$$|r^t_{i+1}-r^t_i|\leq 2B^t_i \lambda (\Ji)$$
where $B^t_i:=\vphi''_{N(u_i,t)}(u_i)$.
To conclude the argument, we notice that (since $u_{i+1}\sim u_i$) 
\begin{equation}\label{nts}
 \sum_{i=1}^{m-1}  \frac{B^t_i\lambda (\Ji)}{r^t_{i+1}r^t_{i}}\leq  \frac{\lambda ( J)}{ S^t_J}.
\end{equation}
This, by~\eqref{fgh},  finishes the proof of Proposition \ref{rem.cse2}

\end{proof}

Proposition \ref{rem.cse2} has the following corollaries that allow us to deal with the decay of correlations  on good intervals.
In the corollaries below $C$ again denotes a global positive constant which depends only on the rotation number $\a$ and on the
ceiling function $\vphi$. It may be different in each corollary. 

\begin{corollary}\label{correlation.good} For every good interval $J$, we have
\begin{equation}\label{cor:j}
\left|\int_{\bar J}f(T^t_{\a,\vphi}(\theta,s))g(\theta,s)d \theta\right|\leq  C ( \Nzero(f,g) q_n^{-1} + \None(f,g) q_n^{-2}) \, t^{-1/2-\frac{\eta}{4}}.
\end{equation}
\end{corollary}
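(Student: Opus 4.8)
The plan is to derive Corollary~\ref{correlation.good} directly from Proposition~\ref{rem.cse2} by splitting a good interval $J$ into a bounded number of pieces of length at most $t^{-10}$ (so that the hypotheses of the proposition apply on each piece), applying the estimate on each piece, and summing the boundary terms telescopically. The key point is that the boundary term $p(z,w)$ in Proposition~\ref{rem.cse2}, although it is $O(\Nzero(f,g)/r^t_J)$ in general, is in fact much smaller for a good interval: the quantities $g(z,s)\phi(T^t_{\a,\vphi}(z,s))$ and $g(w,s)\phi(T^t_{\a,\vphi}(w,s))$ are controlled by $\Nzero(f,g)$, and $1/\vphi'_{N(z,t)}(z)$ together with $1/\vphi'_{N(w,t)}(w)$ are controlled by $1/r^t_J$, but the difference between these two reciprocals is of the same order as the main term $\lambda(\bar J)/S^t_J$ — this is essentially the same telescoping phenomenon exploited in the proof of Proposition~\ref{rem.cse2} itself. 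So the strategy is really to iterate the proof of the proposition once more at the level of the good interval, keeping track of the boundary contributions.

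Concretely, fix $t \in [l_0,l_1]$. Since $J$ is good, it is $t$-good, so either \eqref{verygood} holds, i.e. $S^t_J \geq t^{1/2 + 2\epsilon}$, or \eqref{good} holds for some $x^* \in I$. In the first case I would feed $S^t_J \geq t^{1/2+2\epsilon}$ into Proposition~\ref{rem.cse2}. To control the boundary term $p(z,w)$, I would additionally use the lower bound on $r^t_J$ that comes with \eqref{verygood}: combining $S^t_J \geq t^{1/2+2\epsilon}$ with the upper bound $u_I \leq q_n^{3-\eta}\log^9 q_n$ on the second derivative (Lemma~\ref{sec.fir}, Case~1 of the proof of Lemma~\ref{pr1}) — or more simply by observing that whenever the very-good alternative is reached via large $u_I$ we have $r^t_J$ bounded below by a large power of $q_n$ — gives $r^t_J$ large enough that $\Nzero(f,g)/r^t_J$ is negligible compared to $\Nzero(f,g) q_n^{-1} t^{-1/2-\eta/4}$. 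Since $t \leq q_{n+2}$ and $\a$ is Diophantine, $t^{-1/2} \geq C q_n^{-1-\text{(something small)}}$, and one checks that the factor $q_n^{-1}$ absorbs the logarithmic losses; the same bookkeeping was already carried out in the proof of Lemma~\ref{pr1}. In the second case, where \eqref{good} holds, one has a pointwise lower bound $|\vphi'_{N(x,t)}(\bar x)| \geq \tfrac12 q_n^{3/2+\eta} + \tfrac12 \tfrac{q_n^{3-\eta}}{\log^6 q_n}|\bar x - \bar x^*|$ and an upper bound $|\vphi''_{N(x,t)}(\bar x)| < q_n^{3-\eta}\log^9 q_n$ on all of $J$. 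From these, $r^t_J \geq \tfrac12 q_n^{3/2+\eta}$ and $S^t_J = \inf (\vphi')^2/\vphi'' \geq \tfrac{q_n^{3+2\eta}}{4 q_n^{3-\eta}\log^9 q_n} = \tfrac14 q_n^{3\eta}\log^{-9}q_n$, which is not by itself $\geq t^{1/2+2\epsilon}$; the gain must come from the linear growth of $\vphi'$ away from $x^*$, so I would split $J$ according to distance from $x^*$ into dyadic annuli $|\bar x - \bar x^*| \in [2^j \delta, 2^{j+1}\delta]$ with $\delta = q_n^{-3/2+5\eta}$, apply Proposition~\ref{rem.cse2} on each, where on the $j$-th annulus $r^t$ and $S^t$ improve by a factor $\sim 2^j$ and $\sim 2^{2j}$ respectively, and sum the geometric series. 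The innermost piece, of length $\lesssim \delta = q_n^{-3/2+5\eta}$, contributes $\lesssim \Nzero(f,g)\cdot \delta/S^t_{\min} + \None(f,g)\cdot \delta/q_n^{3/2+\eta}$, which is where the factors $q_n^{-1}$ and $q_n^{-2}$ ultimately come from.

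The main obstacle I expect is the careful verification that in the second alternative the contributions of the dyadic pieces sum to something of order $q_n^{-1} t^{-1/2-\eta/4}$ rather than merely $t^{-1/2-\eta/4}$, i.e. squeezing out the extra power $q_n^{-1}$ (and $q_n^{-2}$ for the $\None$ term). The linear-in-distance lower bound on $\vphi'$ gives decay $\sum_j 2^{-j}$ for the $r^t$-terms and $\sum_j 2^{-2j}$ for the $S^t$-terms, both convergent, but the prefactor is governed by the smallest scale $\delta = q_n^{-3/2+5\eta}$ and by $S^t_J \gtrsim q_n^{3\eta}$; converting $t \in [l_0, l_1] \subset [q_n, q_{n+2}]$ together with $\a \in D_{\log,\xi}$ into the clean bound $t^{-1/2-\eta/4}$ with the stated powers of $q_n$ requires using $q_{n+2} \leq q_{n+1}\log^{1+\xi}q_{n+1} \cdot(\text{const})$ type inequalities and being slightly generous with the exponent $\eta/4 < \eta$. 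Everything else — the telescoping of boundary terms, the fact that $p(z,w)$ is $O(\Nzero(f,g)/r^t_J + \None(f,g)\lambda(\bar J)/S^t_J)$ after one more telescoping step, and the reduction to finitely many length-$\leq t^{-10}$ pieces — is routine and parallels the proof of Proposition~\ref{rem.cse2}.
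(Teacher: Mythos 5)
Your strategy is the same as the paper's: treat the two alternatives of Definition~\ref{int.good} separately, apply Proposition~\ref{rem.cse2} directly in the case \eqref{verygood}, and in the case \eqref{good} cut out a small central piece around $x^*$ and decompose the rest into dyadic annuli on which $r^t$ and $S^t$ improve by factors $2^j$ and $2^{2j}$, telescoping the boundary terms $p$. Two of your steps, however, do not work as written. In the case \eqref{verygood}, the lower bound on $r^t_J$ that you need to kill the boundary term $p(z,w)$ cannot come from the upper bound $u_I\leq q_n^{3-\eta}\log^9 q_n$: since $S^t_J=\inf (\vphi')^2/\vphi''$, an upper bound on $\vphi''$ gives no lower bound on $\vphi'$. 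What is needed is the universal lower bound of Lemma~\ref{fi} (estimate \eqref{con:ele2}), namely $|\vphi''_{N(x,t)}(\bar x)|\gtrsim t^{3-\eta}\log^{-3}N(x,t)\geq q_n^{3-10\eta}$ for $x$ with $T^t_{\a,\vphi}x\in W$, which combined with $S^t_J\geq t^{1/2+2\eps}$ gives $r^t_J\geq q_n^{7/4-5\eta}$, more than enough; your fallback of tracking \emph{how} the very-good alternative was obtained in Lemma~\ref{pr1} is not available if one only assumes $J$ good, as the corollary is stated.

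More seriously, in the case \eqref{good} your central scale $\delta=q_n^{-3/2+5\eta}$ is too large. A good interval satisfying \eqref{good} may well contain $x^*$ (this happens when goodness comes from Case~2 of Lemma~\ref{pr1} with $x^*=x_0\in I$), so the central piece is genuinely present, and on it the only available bounds are $r\geq\tfrac12 q_n^{3/2+\eta}$ and $S\gtrsim q_n^{3\eta}\log^{-9}q_n$, exactly as you computed. Its contribution to the $\Nzero$-term is then of order $q_n^{-3/2+2\eta}$ up to logarithms, whereas the target $\Nzero\, q_n^{-1}t^{-1/2-\eta/4}$ is at most of order $q_n^{-3/2-\eta/4}$ since $t\geq l_0>q_n$: you are short by a factor $q_n^{9\eta/4}$. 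The paper takes the central interval $J_{weak}$ of radius $q_n^{-3/2+2\eta}$, for which $\lambda(\bar J_{weak})/S^t_{J_{weak}}\lesssim q_n^{-3/2-\eta/2}\leq q_n^{-1}t^{-1/2-\eta/4}$; any radius $\leq q_n^{-3/2+2\eta}$ closes the estimate, but $q_n^{-3/2+5\eta}$ (the half-width of $I_{bad}$, a different scale) does not. Relatedly, the factors $q_n^{-1}$ and $q_n^{-2}$ are not produced by the innermost piece: they come essentially for free, either from $\lambda(\bar J)\leq\lambda(\bar I)\leq (q_n\log^{15}q_n)^{-1}$ in the terms of the form $\{\Nzero\lambda(\bar J)+\None\lambda(\bar J)^2\}\,t^{-1/2-\eta/4}$, or from the direct comparison $q_n^{-3/2-\eta/2}\leq q_n^{-1}t^{-1/2-\eta/4}$ using $t\leq q_{n+2}\lesssim q_n\log^{2+3\xi}q_n$; once the central scale is corrected there is no further power of $q_n$ to squeeze out.
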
 

\begin{proof} Assume $J\cap T_{\a,\vphi}^{-t}W\neq\emptyset$ (otherwise the LHS is $0$) and let first \eqref{verygood} hold in the definition 
\ref{int.good} of a good interval. Notice that for $x\in T^{-t}_{\a,\vphi}(W)$,  $\vphi''_{N( x,t)}(\bar x)\geq q_n^{3-10\eta}$ (see \eqref{con:ele2})
and hence by \eqref{verygood}, $1/ r^t_J\leq q_n^{-3/2-4\eps}\leq t^{-1/2-2\eps}\lambda ( J)$. Moreover,  $p(z,w)\leq C \Nzero(f,g)  /r^t_J \leq 
C \Nzero(f,g) t^{-1/2-\eps}\lambda ( J)$. An application of Proposition \ref{rem.cse2} for $J$ finishes the proof in this case. If \eqref{good} holds, 
define $J_{weak}:=[x^*-\frac{1}{q_n^{3/2-2\eta}}, x^*+\frac{1}{q_n^{3/2-2\eta}}]\cap J$. Notice that by \eqref{good}, 
$$
 r^t_{J_{weak}}\geq q_n^{3/2+\eta} \quad \text{ and }  \quad S^t_{J_{weak}}\geq q_n^{\frac{5\eta}{2}}.
$$
So by Proposition \ref{rem.cse2} for $J_{weak}$, we have

\begin{equation}\label{eq:cs0}
\left|\int_{\bar J_{weak}}f(T^t_{\a,\vphi}(\theta,s))g(\theta,s)d \theta \right|\leq C ( \Nzero(f,g) q_n^{-1} + \None(f,g) q_n^{-2})    \, t^{-1/2-\frac{\eta}{4}}.
\end{equation}
Therefore it remains to show \eqref{cor:j} with $J\setminus J_{weak}$. Let $J=[z,w] \times\{s\}$  and let $J \setminus J_{weak}= J' \cup J''$, so that
$z\in J'$ (unless $J'=\emptyset$) and $w \in J''$ (unless $J''=\emptyset$). We will show \eqref{cor:j} for $J'$ and $J''$.  We will apply the same procedure to both $J'$ and $J''$, therefore we will explain the argument only in the case of $J''$. Let $m\in \N$ be the unique positive integer s.t. 
$2^m\leq q_n^{3/2-2\eta}(w-x^*)\leq 2^{m+1}$. Let us consider the intervals $J''_i:= [w_i, w_{i+1}] \times \{s\}=[x^*+\frac{w-x^*}{2^{i+1}},
x^*+\frac{w-x^*}{2^i}] \times \{s\}\cap J''$, where $i=0,\dots,m$. Then $J''=\bigcup_{i=0}^m J''_i$ (notice that $J_m$ may be degenerated). Consider only those 
$J''_i$ for which  $T^{-t}_{\a,\vphi}(J''_i)\nsubseteq W^c$. By enumeration assume this is the case for all $i=0,\dots,m$. By \eqref{good} we have 
\begin{equation}\label{taj} 
r^t_{J''}\geq q_n^{3/2+\frac{\eta}{2}}.
\end{equation}

Moreover by \eqref{good},  for every $J''_i$, we have 
$$\sup_{x\in J''_i}\vert \vphi''_{N( x)}(\bar x) \vert \leq q_n^{3-\eta}\log^9 q_n \quad \text{ and } \quad \inf_{x\in J''_i} \vphi'_{N( x)}(\bar x) \geq \frac{q_n^{3-\eta}(w-x^*)}{2^{i+2}\log^5 q_n} \,.$$
 
Therefore, we have the following estimate:
\begin{equation}\label{trj}\sum_{i=0}^{m}  \frac{\lambda ( J''_i)}{S^t_{J''_i}}\leq \frac{\log^{20} q_n}{q_n^{3-\eta}}\sum_{i=0}^m
\frac{2^{2i+4} }{(w-x^*)^2} \lambda ( J''_i)\leq
\frac{8\log^{20} q_n}{(w-x^*)q_n^{3-\eta}}2^{m+1}\leq\frac{1}{q_n^{3/2+\frac{\eta}{2}}}\leq t^{-1/2-\frac{\eta}{3}}\lambda ( J). 
\end{equation}
Notice that by the definition of the function $p(z,w)$ (see Proposition \ref{rem.cse2}), we have  $p(w_0,w_{m+1})=\sum_{i=0}^{m}p(w_i,w_{i+1})$. By Proposition \ref{rem.cse2} for $J''_i$, $i=0,\dots,m$ and by \eqref{taj}, \eqref{trj}, we derive
$$
\begin{aligned}
\left|\int_{\bar J''}f(T^t_{\a,\vphi}(\theta,s))g(\theta,s)d \theta \right| &\leq |p(w_0,w_{m+1})|+ \left|\int_{\bigcup J''_i}f(T^t_{\a,\vphi}(\theta,s))g(\theta,s)d \theta- p(w_0,w_{m+1})\right| \\
&\leq C \{  \Nzero(f,g) \lambda ( J) + \None(f,g) \lambda ( J)^2\}   \, t^{-1/2-\frac{\eta}{4}}.
\end{aligned}
$$
The same estimate is true for $J'$. This completes the proof of Corollary \ref{correlation.good}.
\end{proof}

Moreover, we also have the following crucial corollary for the bootstrap argument in Subsection~\ref{Proof_bn}. Recall that $l,l_0,l_1,n$ and $W$ are chosen as in Section \ref{sec.partition}. 

\begin{corollary}\label{alleb}  For every 
interval   $\bar I \in \cI_k$ and for all $s\in \R^+$  such that  $I:= \bar I \times \{s\} \subset M$,  for  all $t\in [l_0,l_1]$,  we have 
\begin{equation}\label{eq:leb}
\left|\int_{ \bar I }f(T^t_{\a,\vphi}(\theta,s))g(\theta,s)d \theta \right|\leq 
C \{ \Nzero(f,g) \lambda ( I)   + \None(f,g) \lambda ( I)^2 \}
 t^{-1/2+6\eta}.
\end{equation}
\end{corollary}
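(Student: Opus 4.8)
The plan is to split $\bar I$ along the decomposition $\bar I=\bar J_1\sqcup\bar I_{bad}\sqcup\bar J_2$ of part $(B_3)$ of Proposition \ref{prop.badset} and estimate the three integrals separately. By Proposition \ref{good.dec} the intervals $J_1,J_2$ are good, so Corollary \ref{correlation.good} bounds each of $|\int_{\bar J_i}f(T^t_{\a,\vphi}(\theta,s))g(\theta,s)\,d\theta|$ by $C(\Nzero q_n^{-1}+\None q_n^{-2})t^{-1/2-\eta/4}$. Since $\bar I\in\cI_k$ with $q_k\le q_n\log^{20}q_n$ one has $\lambda(\bar I)\ge c\,q_n^{-1}\log^{-20}q_n$, while $q_n<l_0\le t\le l_1\le q_n\log^{O(1)}q_n$ by the Diophantine condition on $\a$; hence $t^{-1/2-\eta/4}\le t^{-1/2+6\eta}q_n^{-6\eta-\eta/4}$ and the polylogarithmic losses are swallowed by $q_n^{\eta}$, so this contribution lies far below the bound claimed. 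In particular, if $I_{bad}=\emptyset$ (i.e.\ $I\cap\mathcal B_l=\emptyset$) we are done with $I=J_1$.

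It remains to bound $\int_{\bar I_{bad}}f(T^t_{\a,\vphi}(\theta,s))g(\theta,s)\,d\theta$, where $\lambda(\bar I_{bad})\le 2q_n^{-3/2+5\eta}$ by $(B_1)$. The first alternative of $(B_4)$ makes all of $I$ $t$-good by Lemma \ref{pr1}, a case covered above, so we may assume the second alternative: $u_I\le q_n^{3-\eta}\log^9q_n$, and there is $x^t_I\in I_{bad}$ with $T^t_{\a,\vphi}x^t_I\in W$ and $|\vphi'_{N(x^t_I,t)}(\bar x^t_I)|=r^t_I<q_n^{3/2+\eta}$. I would first note that only $\theta$ with $T^t_{\a,\vphi}(\theta,s)\in M_\zeta$ contribute (as $f$ vanishes off $M_\zeta$), and for such $\theta$, $\vphi(\bar\theta+N(\theta,t)\a)$ is bounded, so by definition of the special flow $\vphi_{N(\theta,t)}(\theta)\ge t-O(1)\ge q_n/2$ for $q_n$ large, whence by \eqref{con:ele2} the second derivative of the Birkhoff sum is pinched,
\[
q_n^{3-\eta}\log^{-O(1)}q_n\ \le\ \vphi''_{N(\theta,t)}(\theta)\ \le\ q_n^{3-\eta}\log^{9}q_n ,
\]
on the contributing set. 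Hence $\vphi'_{N(\cdot,t)}$ is strictly monotone there with a single sign change $\theta_0$, located within $q_n^{-3/2+4\eta}$ of $\bar x^t_I$ by Corollary \ref{smder}, and $|\vphi'_{N(\cdot,t)}|$ grows essentially linearly in $|\bar\theta-\theta_0|$. I would then split $\bar I_{bad}$ into three zones according to $|\bar\theta-\theta_0|$: (i) an inner zone $|\bar\theta-\theta_0|\le q_n^{-5/2+5\eta}$, handled by the trivial bound $\|f\|_0\|g\|_0\cdot 2q_n^{-5/2+5\eta}\le\None\cdot 2q_n^{-5/2+5\eta}$, which fits because $q_n^{-5/2+5\eta}\le C\lambda(\bar I)^2t^{-1/2+6\eta}$; (ii) an outer zone $|\bar\theta-\theta_0|\ge q_n^{-3/2+4\eta}$, where $|\vphi'_{N(\cdot,t)}|\gtrsim q_n^{3/2+2\eta}$, treated by the dyadic argument from the proof of Corollary \ref{correlation.good} (Proposition \ref{rem.cse2} on each dyadic ring, with the boundary terms $p(\cdot,\cdot)$ telescoping), which gives $O(\log^{O(1)}q_n)(\Nzero q_n^{-3/2-3\eta}+\None q_n^{-3+\eta})$, again well within the bound; and (iii) the intermediate critical strip $q_n^{-5/2+5\eta}\le|\bar\theta-\theta_0|\le q_n^{-3/2+4\eta}$, which I expect to be the main obstacle.

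On the critical strip Proposition \ref{rem.cse2} degenerates (its $\Nzero/S^t_J$ term blows up as $S^t_J\to0$ at $\theta_0$), so I would estimate the integral there directly, exploiting both the coboundary structure and the $C^1$ regularity of $f$. After cutting into pieces on which $N(\cdot,t)$ is constant, $T^t_{\a,\vphi}(\theta_0+\sigma,s)$ traces, in the universal cover of the fibre, the graph of $\tilde s_0-\tfrac12\vphi''_N\sigma^2+O(\dots)$; since $\vphi''_N\asymp q_n^{3-\eta}$ up to $\log^{O(1)}q_n$ and the strip has radius $\approx q_n^{-3/2+4\eta}$, the fibre coordinate sweeps an interval of length $\approx q_n^{8\eta}\gg1$, so the image winds many times around the fibre and, decomposing the $\sigma$-integral column by column, the full-period contributions telescope through the transfer function $\phi$, each remaining term being $O(\|\phi\|_0)$. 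Substituting the fibre coordinate $w=\vphi_{N(\cdot,t)}(\theta_0+\sigma)-\vphi_{N(\cdot,t)}(\theta_0)$ turns the strip integral into $\int f(\cdot)(2\vphi''_Nw)^{-1/2}\,dw$ up to errors of order $\None\lambda(\bar I_{bad})^2$ coming from the $\theta$-variation of $f$ and $g$; a careful integration by parts, using $|\!\int_0^wf|\le\min(\|f\|_0w,\,2\|\phi\|_0)$ and the $C^1$ bound on $f$ to dominate the innermost scale, together with the pinching of $\vphi''_N$ and the precise choice of the inner cutoff, should then yield a bound of the required shape $C\{\Nzero\lambda(\bar I)+\None\lambda(\bar I)^2\}t^{-1/2+6\eta}$. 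It is precisely here, near the sign change of $\vphi'_{N(\cdot,t)}$, that the argument departs from everything in \cite{FU,Tie1,Tie2,Sim} and from the proof of Corollary \ref{correlation.good}, and where the dependence of the constants on $\|f\|_1$, $\|\phi\|_1$, $\|g\|_1$ must be tracked with care; this is the heart of the estimate.
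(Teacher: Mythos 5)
Your treatment of $J_1$ and $J_2$ coincides with the paper's: decompose $I=J_1\sqcup J_2\sqcup I_{bad}$ via $(B_3)$, invoke Proposition \ref{good.dec} and Corollary \ref{correlation.good}, and absorb the polylogarithmic losses using $\lambda(\bar I)\geq c/(q_n\log^{20}q_n)$ and $t\leq Cq_n\log^{2}q_n$. The problem is your treatment of $I_{bad}$. The zone you yourself call ``the heart of the estimate'' -- the critical strip around the sign change of $\vphi'_{N(\cdot,t)}$ -- is not proved: the winding/telescoping/stationary-phase argument is only sketched (``should then yield a bound of the required shape''), with no control of the errors in the change of variable $w=\vphi_{N}(\theta_0+\sigma)-\vphi_N(\theta_0)$, of the innermost scale where the fibre sweep is $\ll 1$, or of the fact that $N(\cdot,t)$ jumps inside the strip. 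As written, the proposal is therefore incomplete precisely at its central step.

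Moreover, the missing idea is that no such analysis is needed here: the exponent $-1/2+6\eta$ in Corollary \ref{alleb} is deliberately lossy. Since $\lambda(\bar I_{bad})\leq 2q_n^{-3/2+5\eta}$ by $(B_1)$, while $\lambda(\bar I)\,t^{-1/2+6\eta}\geq c\,q_n^{-3/2+6\eta}\log^{-21}q_n\gg q_n^{-3/2+5\eta}$, the crude estimate $\Vert f\Vert_0\Vert g\Vert_0\,\lambda(\bar I_{bad})$ already fits under the stated bound; this one-line sup-norm-times-measure estimate is exactly what the paper does for $I_{bad}$ (your zones (i)--(iii) all collapse into it, making Proposition \ref{rem.cse2}, Corollary \ref{smder} and the dyadic decomposition superfluous on the bad piece). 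Note that this produces the constant $\Vert f\Vert_0\Vert g\Vert_0\leq \None(f,g)$ rather than $\Nzero(f,g)=\Vert\phi\Vert_0\Vert g\Vert_0$, a bookkeeping looseness shared by the paper's own proof and harmless in every later application (where $f=g$ or $f=g=f_J$, so both quantities scale identically). The genuinely delicate analysis of the bad set is not supposed to happen in this corollary at all: it is carried out on average over $t$ in Proposition \ref{bn}, using the tower structure of $(B_5)$ and the bootstrap through Corollary \ref{alleb} itself, which is why the corollary only needs the weak rate $t^{-1/2+6\eta}$.
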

\begin{proof} If $I \cap W^c \neq \emptyset $, then $I \subset M_\zeta^c$ hence (LHS) is $0$. If $I \subset W$ then let $I=J_1\sqcup J_2\sqcup I_{bad}$ as in Proposition \ref{good.dec}. We  apply Corollary \ref{correlation.good} to $J_1$ and $J_2$ together with the estimates 
$$
\lambda ( I)\geq q_n\log^{-20}q_n \quad \text{ and }\quad \lambda ( I_{bad})<\frac{1}{q_n^{3/2-2\eta}}\,.
$$
For the interval $I_{bad}$ we estimate the integral by the uniform norm of the integrand times the measure $\lambda ( I_{bad})$ of the domain
of integration.
 \end{proof}

\subsection{Summable decay on good intervals. Proof of Proposition \ref{tdec}}
\label{Proof_tdec}

We  now explain how the results of Section \ref{sec.uniformstretchandcorrelations} imply Proposition \ref{tdec}.

In fact, we prove a more general statement that will be relevant in Subsection~\ref{sec6.kochergin},  to complete the proof that 
the spectrum is Lebesgue with countable multiplicity.

 \begin{proposition} \label{tdecbis} For every set $E$, measurable with respect to the partition 
$W$ (see \eqref{defw} for its definition), we have
$$
\left|\int_{E\setminus \mathcal B_l}f(T^t_{\a,\vphi}(x))g(x)d\mu\right| <   C \{ \Nzero(f,g) \mu(E) + \None(f,g) \mu(E)^2\} \,t^{-1/2-\frac{\eta}{5}}\,.
$$
\end{proposition}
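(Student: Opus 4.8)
The plan is to reduce the integral over $E\setminus\mathcal B_l$ to a sum of integrals over horizontal fibers of the partition $W$, and on each such fiber apply the good-interval decomposition from Proposition \ref{good.dec} together with the correlation estimate of Corollary \ref{correlation.good}. More precisely, for each $s$ such that the fiber $E\cap(\T\times\{s\})$ is nonempty, this fiber is a union of partition intervals $\bar I\in\cI_k$ (because $E$ is $W$-measurable), and each such $I\subset W$ splits as $I=J_1\sqcup J_2\sqcup I_{bad}$ with $J_1,J_2$ good by Proposition \ref{good.dec}. The contributions of $I_{bad}$ integrate to zero once we restrict to $E\setminus\mathcal B_l$, since by $(B_3)$ either $I\cap\mathcal B_l=\emptyset$ (and then $I_{bad}$ is empty) or $I_{bad}$ is exactly a level of some tower $U_i\subset\mathcal B_l$ and is thus removed. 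So we are left with the $J_1,J_2$ pieces only.

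Next I would sum the good-interval bound \eqref{cor:j} over all the good intervals $J$ making up $(E\setminus\mathcal B_l)\cap(\T\times\{s\})$, and then integrate in $s$. Summing the $\Nzero(f,g)q_n^{-1}t^{-1/2-\eta/4}$ term over intervals whose total length is at most $\lambda(\bar E\cap(\T\times\{s\}))$ and integrating in $s$ gives a contribution bounded by $C\,\Nzero(f,g)\,q_n^{-1}\,\mu(E)\,t^{-1/2-\eta/4}$; since $t\le l_1\le q_{n+2}$ and $q_n$ is large, the extra factor $q_n^{-1}$ is more than enough to absorb the loss from passing from the exponent $-1/2-\eta/4$ to $-1/2-\eta/5$ (indeed $q_n^{-1}\le t^{-1/2}$ up to the Diophantine factors, so even dropping $q_n^{-1}$ entirely one keeps a summable exponent; here it simply kills the constant). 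The $\None(f,g)q_n^{-2}t^{-1/2-\eta/4}$ term is handled the same way, the Cauchy--Schwarz-type bookkeeping on the squared-length factor producing the $\mu(E)^2$; one uses that a fiber has at most $O(q_n^{-1})$-many intervals of the partition $\cI_k$ relative... more carefully, one just bounds $\sum_J \lambda(\bar J)^2 \le \big(\sum_J\lambda(\bar J)\big)^2$ per fiber and integrates, using $\mu(E)\le 1$ to combine fibers, which yields the $\None(f,g)\mu(E)^2$ shape stated. Collecting the two terms gives exactly the claimed bound with exponent $-1/2-\eta/5$, and Proposition \ref{tdec} follows by taking $E=M$ (for which $E\cap M_\zeta^c$ contributes nothing since $g\in C^1_0(M_\zeta)$, and $\mu(M)=1$).

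The main obstacle I anticipate is purely bookkeeping rather than conceptual: one must be careful that the decomposition $I=J_1\sqcup J_2\sqcup I_{bad}$ of $(B_3)$ is a decomposition valid simultaneously for all $t\in[l_0,l_1]$ (which it is, by construction of $\mathcal B_l$), so that the same good intervals $J_1,J_2$ can be used for every $t$ in the corollary; and that when $E$ is a general $W$-measurable set, the intervals $I_{bad}$ that get subtracted are precisely the levels of the $U_i$'s lying inside $E$, so that $E\setminus\mathcal B_l$ really is, fiber by fiber, a disjoint union of good intervals with no leftover bad pieces. A secondary point is tracking the dependence of the constants on $\Nzero(f,g)$ and $\None(f,g)$ linearly, as in Corollary \ref{correlation.good}, so that summing over the $O(1/q_n)$-scale family of intervals in a fiber does not degrade the norm dependence — this is exactly why Corollary \ref{correlation.good} carries the favourable factors $q_n^{-1}$ and $q_n^{-2}$, which are there precisely to survive this summation. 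Once these are in place the estimate is immediate.
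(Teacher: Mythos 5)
Your overall route is exactly the paper's: use that $g$ vanishes off $M_\zeta\subset W$ to restrict to $E\cap W$, disintegrate by Fubini into horizontal fibers, split each partition interval as $I=J_1\sqcup J_2\sqcup I_{bad}$ via $(B_3)$ and Proposition \ref{good.dec} (with $I_{bad}$ swallowed by $\mathcal B_l$), apply Corollary \ref{correlation.good} to $J_1,J_2$, and convert the factors $q_n^{-1}, q_n^{-2}$ into $\lambda(\bar I), \lambda(\bar I)^2$ using $\lambda(\bar I)\geq (q_n\log^{20}q_n)^{-1}$, absorbing the logarithms into the drop from $t^{-\eta/4}$ to $t^{-\eta/5}$. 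That is all correct and is what the paper does.

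The one step that does not close as written is the bookkeeping for the $\None(f,g)\,\mu(E)^2$ term. Per fiber you bound $\sum_J\lambda(\bar J)^2\leq\bigl(\sum_J\lambda(\bar J)\bigr)^2=\ell(s)^2$, where $\ell(s)$ is the length of the fiber of $E$ at height $s$, and then invoke ``$\mu(E)\leq 1$ to combine fibers.'' But $\ell(s)^2\leq\ell(s)$ together with integration in $s$ only yields $\int\ell(s)\,ds=\mu(E\cap W)\leq\mu(E)$, which is the \emph{weaker} bound $\mu(E)$ rather than $\mu(E)^2$; and this difference matters downstream, since in Section \ref{subsec:spectral_type} the $\mu(E)^2$ is precisely what cancels the factor $\lambda(\bar J)^{-2}$ hidden in $\None(f_J,g_J)$. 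The missing (easy) ingredient is that $\sup_s\ell(s)\leq C\,\mu(E)$: since $E$ is $W$-measurable it is a union of columns $(I_j)_\vphi$ of height $\min_{\bar I_j}\vphi\geq c>0$, so $\sum_j\lambda(\bar I_j)\leq c^{-1}\sum_j\lambda(\bar I_j)\min_{\bar I_j}\vphi= c^{-1}\mu(E\cap W)$. With this, $\int\ell(s)^2\,ds\leq\sup_s\ell(s)\cdot\int\ell(s)\,ds\leq C\mu(E)^2$ (equivalently, $\lambda(\bar I_j)\leq C\mu(E)$ for each column, so $\sum_j\lambda(\bar I_j)^2\min_{\bar I_j}\vphi\leq C\mu(E)\sum_j\lambda(\bar I_j)\min_{\bar I_j}\vphi\leq C\mu(E)^2$). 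A cosmetic point: for Proposition \ref{tdec} you should take $E$ to be the union of all columns of $W$ (with $\mu(E)\leq 1$) rather than $E=M$, since $M$ itself is not $W$-measurable; this is harmless because $g$ is supported in $M_\zeta\subset W$. Also, your aside that one could ``drop $q_n^{-1}$ entirely'' is false for the summation over the $\sim q_k$ intervals of a fiber --- as you correctly note later, that factor is exactly what makes the per-interval bound summable.
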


\begin{proof}
Since $g=0$ on $M^c_\zeta \supset W^c$, we have
$$\left|\int_{E\setminus \mathcal B_l}f(T^t_{\a,\vphi}(x))g(x)d\mu\right|=\left|\int_{(E\cap W)\setminus \mathcal B_l}f(T^t_{\a,\vphi}(x))g(x)d\mu\right|.
$$
  
By Fubini, it is enough to show that, for every interval $I\subset W$, we have
$$
\left|\int_{\bar I\setminus \bar I_{bad}}f(T^t_{\a,\vphi}(\theta,s))g(\theta,s)d \theta\right|\leq  
C \{ \Nzero(f,g)\lambda ( I) + \None(f,g) \lambda ( I)^2 \} \, t^{-1/2-\eps},
$$
where the subinterval $I_{bad}$ is as in Proposition \ref{good.dec}. It is then enough to apply  Corollary \ref{correlation.good} (to the subintervals $J_1$ and $J_2$) together with the lower bound 
$\lambda ( I)\geq q_n\log^{-20}q_n$.

Proposition \ref{tdecbis} is thus proved, and Proposition \ref{tdec} immediately follows, as 
among the properties of the bad set (see Proposition \ref{prop.badset}) we have the bound 
$\mu (\mathcal B_l) \leq q_n^{-1/2 + 6 \eta}$.
\end{proof}

\subsection{Averaged decay on the bad set. Proof of Proposition \ref{bn}}
\label{Proof_bn}
Notice that as the bad set $\mathcal B_l$ decomposes by \eqref{def:b} as the union of the towers $U_1$, ...,
$U_m$,  Proposition \ref{bn} follows by the proposition below. 

Let $C_{f,g}$ denote a positive constant which depends on the functions $f\in \cF$ and $g\in C^1_0(M)$ only through the quantities  $\Nzero(f,g)$ and $\None(f,g)$. 

\begin{proposition}\label{towdec} For every $i\in\{1,\dots,m\}$, we have 
$$
\int_{l_0}^{l_1}\left|\int_{U_i}f(T^t_{\a,\vphi}(x))g(x)d\mu\right|dt< C_{f,g}\frac{(l_1-l_0)\mu(U_i)}{q_n^{20\eta}}.
$$
\end{proposition}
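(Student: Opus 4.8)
The plan is to estimate the time-averaged correlation on a single complete tower $U_i$ by transporting the problem, via property $(B_5)$ of Proposition~\ref{prop.badset}, to a correlation integral over the ``straightened'' tower $\cT_{t,i}$ sitting over a short horizontal base $B_{t,i}$ of width $\approx q_n^{-3/2+5\eta}$ and height $h_{t,i}\geq q_n^{3/5-1/50}$. The first step is the reduction: write
$$\int_{U_i} f(T^t_{\a,\vphi}x)g(x)\,d\mu = \int_{T^{-t}_{\a,\vphi}(U_i)} f(T^t_{\a,\vphi}x)g(T^t_{\a,\vphi}x)\, \ldots$$
— more precisely, change variables so that the integral over $U_i$ of $f(T^t x)g(x)$ becomes an integral over $T^{-t}_{\a,\vphi}(U_i)$, then replace $T^{-t}_{\a,\vphi}(U_i)$ by $\cT_{t,i}$ at the cost of an error bounded by $\Vert f\Vert_0\Vert g\Vert_0\,\mu\big((T^{-t}_{\a,\vphi}(U_i)\triangle\cT_{t,i})\cap M_\zeta\big)\leq C_{f,g}\,q_n^{-1+10\eta}$, using $(B_5)$ and the fact that $g$ is supported in $M_\zeta$. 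Since we will integrate this error over $t\in[l_0,l_1]$ and $l_1-l_0\sim l^{1/20}$ while $\mu(U_i)\sim q_n^{-1/2+5\eta}\cdot(\text{height})/q_k$-type quantities, I must check the error is $o\big((l_1-l_0)\mu(U_i)q_n^{-20\eta}\big)$; this is where the margins in the exponents ($10\eta$ vs.\ $20\eta$, and the $q_n$ versus $q_{n+1}$ comparisons as in the proof of Theorem~\ref{decl}) are designed to close.

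The second and main step is to bound $\int_{l_0}^{l_1}\big|\int_{\cT_{t,i}} f(T^t_{\a,\vphi}x)g(x)\,d\mu\big|\,dt$. Here I would use Fubini to write the tower integral as an integral over the short base $B_{t,i}$ of the fiberwise quantities, reducing to horizontal slices of the form treated in Corollary~\ref{alleb}: for each fixed height $s$, the slice $\bar B_{t,i}\times\{s\}$ has length $\lambda(\bar B_{t,i})\approx q_n^{-3/2+5\eta}$, so Corollary~\ref{alleb} gives
$$\Big|\int_{\bar B_{t,i}} f(T^t_{\a,\vphi}(\theta,s))g(\theta,s)\,d\theta\Big| \leq C\big\{\Nzero(f,g)\lambda(\bar B_{t,i}) + \None(f,g)\lambda(\bar B_{t,i})^2\big\}\,t^{-1/2+6\eta}.$$
Summing (integrating) over the $\approx h_{t,i}$ levels of the tower and over $t\in[l_0,l_1]$, and using $t\geq l_0 = l^{21/20} \geq q_n$, one gets a bound of the shape $C_{f,g}\,(l_1-l_0)\,h_{t,i}\,\lambda(\bar B_{t,i})\,q_n^{-1/2+6\eta}$, and since $\mu(U_i)\approx h_{t,i}\lambda(\bar B_{t,i})$ (up to constants involving $\vphi$ on the relevant levels, controlled by Lemma~\ref{koksi}), this reads $C_{f,g}(l_1-l_0)\mu(U_i)q_n^{-1/2+6\eta}$. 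Since $\tfrac12 - 6\eta > 20\eta$ for $\eta<\tfrac1{1000}$, this is stronger than what is claimed — so in fact the crude application of Corollary~\ref{alleb} already suffices, and no delicate bootstrap is needed here; the bootstrap announced in the introduction is what produced Corollary~\ref{alleb} itself via $(B_4)$.

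\textbf{Main obstacle.} The delicate point is not the final summation but making the geometry of the change of variables in Step~1 rigorous and uniform in $t$: the map $T^t_{\a,\vphi}$ is piecewise of the form $(\theta,s)\mapsto(\theta+N(\theta,s,t)\a,\,t+s-\vphi_{N}(\theta))$ with $N$ jumping across the tower, so ``$T^{-t}_{\a,\vphi}(U_i)\approx\cT_{t,i}$'' must be used through $(B_5)$ precisely and the Jacobian (which is $1$, the flow being measure preserving) invoked correctly on each piece. One must also be careful that $g$ vanishes near $\bd M$ so that the parts of $T^{-t}_{\a,\vphi}(U_i)$ and $\cT_{t,i}$ lying outside $M_\zeta$ contribute nothing, which is exactly why $(B_5)$ is stated with the intersection $\cap M_\zeta$. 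I expect these bookkeeping issues — rather than any genuinely hard estimate — to be the bulk of the work, with the numerical inequalities among the $\eta$-exponents being routine once the reduction is in place.
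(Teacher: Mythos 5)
Your first reduction (replacing $T^{-t}_{\a,\vphi}(U_i)$ by the straightened tower $\cT_{t,i}$ via $(B_5)$, with the support of $g$ in $M_\zeta$ handling the boundary) is fine and does appear in the paper, but your main step does not work, for two reasons.

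First, Corollary~\ref{alleb} is stated only for partition intervals $\bar I\in\cI_k$, whose length is at least $q_n^{-1}\log^{-20}q_n$; the base $\bar B_{t,i}$ has length $2q_n^{-3/2+5\eta}$, which is shorter by a factor of about $q_n^{-1/2}$, so the corollary simply does not apply to the slices $\bar B_{t,i}\times\{s\}$. Worse, those slices are essentially the sub-intervals $I_{bad}$ of the decomposition $I=J_1\sqcup J_2\sqcup I_{bad}$: in the proof of Corollary~\ref{alleb} the contribution of $I_{bad}$ is handled by the \emph{trivial} bound $\Vert f\Vert_0\Vert g\Vert_0\,\lambda(\bar I_{bad})$, and the factor $t^{-1/2+6\eta}$ only emerges because $\lambda(\bar I_{bad})\leq \lambda(\bar I)\,q_n^{-1/2+O(\eta)}$. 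Restricted to the bad piece itself there is no gain at all. Second, and more fundamentally, the uniform-in-$t$ bound $\big|\int_{U_i}f(T^t_{\a,\vphi}x)g(x)d\mu\big|\lesssim \mu(U_i)\,t^{-1/2+6\eta}$ that your argument would produce is false: the towers $U_i$ are built precisely around points $x_i$ with $\vphi'_{N(\theta_i,t_i)}(\theta_i)\leq 2q_n^{3/2+\eta}$, i.e.\ where the shear is too weak for the stretching mechanism of Proposition~\ref{rem.cse2} to yield $t^{-1/2-\eps}$ or even $t^{-1/2+\eps}$ decay; for some times $t\in[l_0,l_1]$ the flow is nearly rigid on $U_i$ and the correlation there is of order $\mu(U_i)$ with no decay factor. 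This is exactly why Propositions~\ref{bn} and~\ref{towdec} are stated as averages over $t$, and why a genuinely new idea is needed on the bad set.

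The paper's actual argument is the bootstrap you dismissed: after Fubini and a Cauchy--Schwarz in the time variable (a van der Corput--type step), the problem reduces to estimating self-correlations $\int_{T^{-t}_{\a,\vphi}(U_i)} f(x)\,f(T^{\bt}_{\a,\vphi}x)\,d\mu$ at time lags $\bt=r-t\in[q_n^{1/41},q_n^{1/19}]$, which are much smaller than $t$. For such $\bt$ one works with the coarser partition $\cI_{\bk}$ adapted to $\bt$; property $(B_5)$ and Lemma~\ref{lemma.independence} show that $\cT_{t,i}$ meets each $I\in\cI_{\bk}$ in a uniformly distributed union of equispaced intervals, so the integral over $\cT_{t,i}\cap I$ interpolates to $\frac{\lambda(\overline{\cT_{t,i}\cap I})}{\lambda(\bar I)}$ times the integral over all of $I$, to which Corollary~\ref{alleb} \emph{is} applicable --- at time $\bt$, not at time $t$. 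That is where the factor $q_n^{-40\eta}$ (and hence the final $q_n^{-20\eta}$) comes from.
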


\begin{proof} Fix $i\in\{1,\dots,m\}$. Let $A:=\{t\in [l_0,l_1]\;:\;\int_{U_i}f(T^t_{\a,\vphi}(x,s))g(x,s)d x>0\}$.
Let $\rho(t)=1$ if $t\in A$ and $\rho(t)=-1$ if $t\in [l_0,l_1]\setminus A$. Then, by Cauchy-Schwarz (H\"older) inequality, we have
\begin{multline*}
\int_{l_0}^{l_1}\left|\int_{U_i}f(T^t_{\a,\vphi}(x))g(x)d\mu\right|dt=\int_{U_i}\left(\int_{l_0}^{l_1}\rho(t)
f(T^t_{\a,\vphi}(x))dt\right)g(x)d\mu \\ \leq
\left(\int_{U_i}\left(\int_{l_0}^{l_1}\rho(t)
f(T^t_{\a,\vphi}(x))dt\right)^2d\mu\right)^{1/2}
\left(\int_{U_i}g(x)^2d\mu\right)^{1/2}\\ \leq 
 \Vert g \Vert_0 \mu(U_i)^{1/2}
\left(\int_{U_i}\left(\int_{l_0}^{l_1}\rho(t)f(T^t_{\a,\vphi}(x))dt\right)^2d\mu\right)^{1/2}.
\end{multline*}
Moreover we have  
\begin{multline*}
\left(\int_{U_i}\left(\int_{l_0}^{l_1}\rho(t)
f(T^t_{\a,\vphi}(x))dt\right)^2d\mu\right)\leq \Vert f \Vert_0^2\, (l_1-l_0)^{3/2}\mu(U_i)\\ +
\left(\int_{U_i}\left(\int_{l_0}^{l_1}\left(\int_{r\in[l_0,l_1]\;:\;|r-t|\geq (l_1-l_0)^{1/2}}
\rho(r)\rho(t)f(T^t_{\a,\vphi}(x))f(T^r_{\a,\vphi}(x))dr\right)dt\right)d\mu\right). 
\end{multline*}
   Therefore,  
to finish the proof of Proposition  \ref{towdec} it is enough to show that  there exists a constant $C>0$ such that, 
for every $t\leq r$ with $t,r \in [l_0,l_1]$ s.t. $|t-r|\geq (l_1-l_0)^{1/2}$, we have
\begin{equation}\label{cortow}
\left|\int_{T_{\a, \vphi}^{t}(U_i)}f(x)f(T^{r-t}_{\a,\vphi}(x))
d\mu\right|\leq C \None (f, f)
\frac{\mu(U_i)}{q_n^{40\eta}}.
\end{equation} 

Note that $\bt:=r-t \in [q_n^{\frac{1}{41}},q_n^{\frac{1}{19}}]$. Let us then fix such a $\bt \in  [q_n^{\frac{1}{41}},q_n^{\frac{1}{19}}]$. Following the notation of Section~\ref{sec.partition} we then let 
$\bl=[\bt]$ 
and  $\bn$ be the unique integer such that $q_\bn \leq \bl<q_{\bn+1}$.

Let $\bk$ be any integer such that  $q_{\bk} \in [q_\bn\log ^{15} q_\bn, q_\bn\log^{20} q_\bn]$.
It follows by construction that we have $q_\bk \in [q_n^{\frac{1}{41}},   q_n^{\frac{1}{19}} \log^{20} q_n]$

Observe now that by Corollary \ref{alleb} there exists a constant $C>0$ such that,  for any interval  $\bar I \in \cI_{\bk}$ and for all $s\in \R^+$  such that  $I:= \bar I \times \{s\} \subset M$, we have  
\begin{equation}\label{fint3}
\left|\int_{\bar I}f(T^{\bt}_{\a,\vphi}(\theta,s))f(\theta,s)
d \theta \right|\leq C  \{\Nzero(f,f) + \None(f,f)\lambda ( I)\} \frac{\lambda ( I)}{q_n^{\frac{1}{100}}}.
\end{equation}
 Thus, it only remains to be seen that the integral in \eqref{cortow} decomposes into integrals over the sets of the form 
 $T_{\a, \vphi}^{t}(U_i) \cap I, \bar I \in \cI_\bk$, and that each is roughly equal to the product of  $\frac{\lambda(U_i\cap I)}{\lambda ( I)}$ times the integral in \eqref{fint3}. This is what we will now derive from Proposition~\ref{prop.badset}, 
 namely from the property that $T_{\a,\varphi}^{t}(U_i)$ is almost equal to the tower $\cT_{t,i}$ of $(B_5)$. In fact, by properties $(B_1)$, $(B_2)$ in Proposition~\ref{prop.badset}, we have the bound $m\leq q_n^{2/5 + \eta}$, hence by property $(B_5)$ we conclude that
\begin{equation}
\label{eq:tower_error}
\sum_{i=1}^m \mu (\cT_{t,i} \triangle T_{\a,\varphi}^{t}(U_i)) \leq  q_n^{-3/5 + 15\eta}  \,.
\end{equation}
The intersection of each tower $\cT_{t,i}$ with $I$ is a regular union of equally separated small intervals (see Figure \ref{wykres1}).  In this 
situation  the interpolation between the integrals is possible.   To  carry it out, we introduce the following 

\begin{definition} \label{def.independence} Let $\nu, \gamma\in (0,1)$. We will say that a collection  $\cS:=K_1\sqcup \ldots \sqcup K_{H}  \subset \T \times \{s\}$  of pairwise disjoint horizontal intervals of equal lengths is $(\nu,\gamma)$-uniformly distributed in the interval~$I$  if there exists a decomposition of~$I$ into a disjoint union of $L \leq \gamma H$ intervals $I_1,\ldots, I_L$ of equal length $\ell \in [\nu,2\nu]$ such that, for all $j\in [1,L]$, we have 
 $$\# \{i \in [1,H] :  K_i \subset  I_j \} \in [( 1-\gamma)\frac{H}{L}, (1+\gamma)\frac{H}{L}]\,.$$
 \end{definition}

 This definition is useful in the following straightforward lemma.
 \begin{lemma} \label{lemma.extrapol}  If $\cS$ and $I$ are as in Definition \ref{def.independence}, then for any $C^1$ real function $G$ defined over the interval $I:= \bar I \times \{s\}$, we have  
 $$\left| \int_{ \bar\cS\cap \bar I} G(\theta,s) d\theta - \frac{\lambda ( \cS \cap  I)}{\lambda(I)} \int_{\bar I} G(\theta,s) 
 d\theta \right| \leq C \left( \nu \|G\|_1 + \gamma \|G\|_0\right)  \lambda(\cS \cap I )\,.  $$
\end{lemma}
 
  \begin{lemma}  \label{lemma.independence} For any complete tower $\cT$ of height $h\geq q_n^{3/5-1/50}$ above any horizontal  interval of the the form
  $B_\cT=[-\frac{1}{q_n^{3/2-5\eta}}+\theta_\cT,\theta_\cT+\frac{1}{q_n^{3/2-5\eta}}] \times \{s_\cT\}$, we have the following:
 
\begin{itemize}
\item[$(I_1)$] if $N(\theta_\cT,h)\leq q_n^{1/3}$, then $\mu(\cT \cap M_\zeta)\leq q_n^{1/2-3/5}\mu(\cT)$\,;
 \item[$(I_2)$]  if $N(\theta_\cT,h)\geq q_n^{1/3}$, then for any $\bar I \in  \cI_\bk$ such that $I := \bar I \times \{s\}
 \subset M_\zeta$,   the set $ \cT \cap I$ is contained in a collection of disjoint intervals of equal  size 
 $(q_n^{-1/4},q_n^{-1/100})$-uniformly distributed in the interval $I$.
 \end{itemize}
 \end{lemma}

\begin{figure}[htb] 
 \centering
  \resizebox{!}{6cm}{\includegraphics[angle=0]{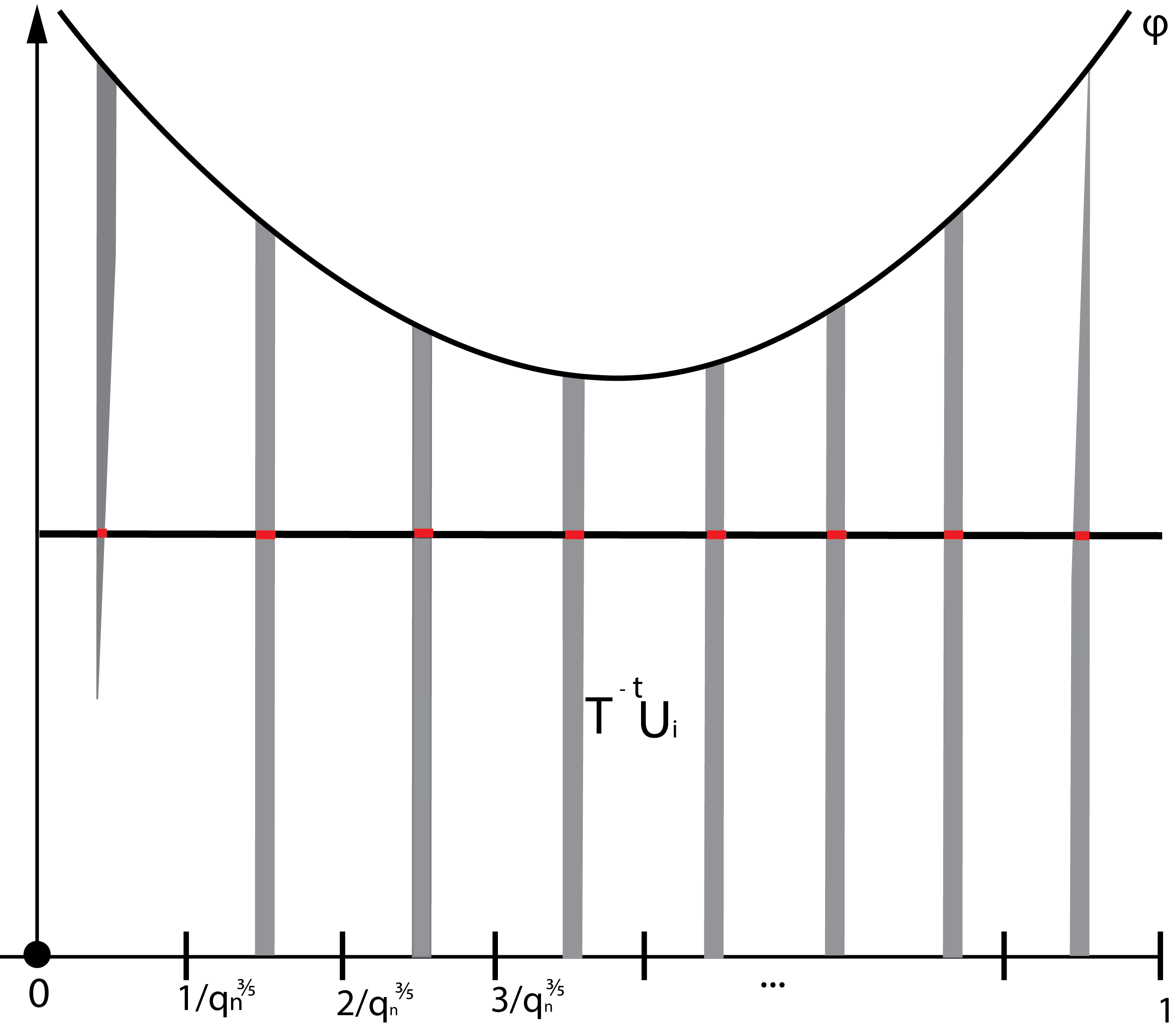}}
\caption{\small The image of the set $U_i$ under the flow. The intersection with any horizontal interval is a union of equispaced intervals.} 
\label{wykres1}
\end{figure}

Before proving Lemma \ref{lemma.independence}, we  show how it implies \eqref{cortow}. By \eqref{eq:tower_error}, it suffices to show that there exists a constant $C>0$  such that 
\begin{equation}\label{cortow2}
\left|\int_{\cT_{t,i} }f(x)f(T^{\bt}_{\a,\vphi}(x)
d\mu \right|\leq C \None(f,f) 
\frac{\mu(\cT_{t,i}) }{q_n^{50\eta}}.
\end{equation} 

\smallskip
If $(I_1)$ holds, then  since $f$ is supported on $M_\zeta$ we have
\begin{equation}
\label{eq:small_tower}
\left|\int_{\cT_{t,i}  }f(x)f(T^{\bt}_{\a,\vphi}(x))
d\mu \right| \leq \Vert f \Vert_0^2 \,
\mu(\cT_{t,i}\cap M_\zeta)\leq \Vert f \Vert_0^2 \,
\frac{\mu(\cT_{t,i})}{q_n^{\frac{1}{10}}},
\end{equation}
hence the proof is finished in this case.
Notice that by Fubini's theorem \eqref{cortow2} follows from the following claim: there exists a constant 
$C>0$ such that, for any $I:= \bar I \times \{s\}$ with $\bar I \in \cI_{\bk}$, we have 
\begin{equation}\label{cortow3}
\left|\int_{\overline{\cT_{t,i} \cap  I}}f(\theta,s)f(T^{\bt}_{\a,\vphi}(\theta,s))
d\theta\right|\leq C \{\Nzero(f,f) + \None(f,f) \lambda ( I)\}
\frac{\lambda(\cT_{t,i} \cap  I)}{q_n^{50\eta}}.
\end{equation} 
In fact, the above bound is stronger than what we need to prove the absolute continuity of the spectrum.
The precise dependence of the constant on the function $f$ and on the interval $I\in \cI_{\bk}$ will be
crucial in the proof, in Subsection~\ref{sec6.kochergin}, that the spectrum is Lebesgue with countable multiplicity .

Now, if $I\subset M_\zeta^c$ 
then the integral in \eqref{cortow3} is zero. Notice that, since $\bt \leq q_n^{1/19}$, by Lemma~\ref{fi} the function 
$G: I\to \R$ defined as $G(\cdot) = f(\cdot)f(T_{\a,\vphi}^{\bt}(\cdot))$ satisfies $\|G\|_{1} \leq q_n^{1/8} \Vert f \Vert_0 \Vert f \Vert_1$, thus $(I_2)$ and Lemma \ref{lemma.independence} imply that 
$$\left| \int_{\overline{\cT_{t,i} \cap  I}} G(\theta,s) d\theta - \frac{\lambda(\cT_{t,i} \cap  I)}{\lambda(I)} \int_{\bar I} G(\theta,s)) d\theta \right| \leq C \Vert f\Vert_0 \{\Vert f \Vert_0  + \Vert f \Vert_1 
\lambda ( I) \} \frac{\lambda(\cT_{t,i} \cap I)}{q_n^{\frac{1}{200}}},$$
and therefore \eqref{cortow3} follows from \eqref{fint3}. The proof of the derivation of the bound in \eqref{cortow}
from Lemma~\ref{lemma.independence}  is complete.

\smallskip

It only remains to give the
\begin{proof}[Proof of Lemma \ref{lemma.independence}]

Let us first consider the case $N:=N(\theta_\cT,h)\geq q_n^{1/3}$.  Let $\{K_1, \dots, K_H\}$ be the smallest
collection of disjoint intervals of equal length such that 
$$
I \cap \cT \subset  K_1\sqcup K_2 \sqcup \dots \sqcup  K_{H}.
$$
Notice that for every $i\in\{1,\dots,H\}$, the interval $\bar K_i$ is centered at the point  $\theta_\cT+k_i\a$, 
for some $k_i\in [0,N]$. In fact, there is an injective map from the set of  $k\in [0,N]$ such that  $\theta_\cT+k \a 
\in \bar I$ to the collection of intervals $\{K_1, \dots, K_H\}$ which misses at most  $2$ intervals. By 
Lemma~\ref{koksma2}  for $\bar J=\bar I$ and  $\theta=\theta_\cT$, we have
\begin{equation}
\label{eq:DK_1}
\vert  H -  N \lambda ( I) \vert  \leq   2+ 2 C^{-1} \log N^{2+\xi}\,.
\end{equation}
Let us then divide $I$ into equal intervals $I_1,\dots, I_L$ of equal length $\ell\in [q_n^{-1/4},2q_n^{-1/4}]$ 
and let us consider $I_j\subset I$. The map from the set $\{i \in [1,H] :  K_i \subset I_j \}$ to the set of  $k\in [0,N]$ such that  $\theta_\cT+k \a \in \bar I_j$, which sends every interval $\bar K_i$ to its center, is injective and misses at most  $2$ elements. From Lemma~\ref{koksma2}  for $\bar J=\bar I_j$ and $\theta=\theta_\cT$, 
it follows that
\begin{equation}
\label{eq:DK_2}
\vert  \# \{i \in [1,H] :  K_i \subset I_j \} - N \lambda ( I_j) \vert \leq 2+ 2 C^{-1} \log N^{2+\xi}\,.
\end{equation}
Notice that since $I \in \cI_k$ by the bound~\eqref{eq:DK_1}, it follows that $H \geq q_n^{1/3-1/20}$ and 
by construction we have $L \leq q_n^{1/4-1/40}$, hence in particular $H/L \geq  q_n^{1/12- 1/40}$. We then derive the estimate
$$
\vert N \lambda ( I_j) - \frac{H}{L} \vert = \vert \frac{N \lambda ( I)}{L} - \frac{H}{L} \vert 
 \leq  \frac{ 2+ C^{-1} \log N^{2+\xi}} {L} \leq   q_n^{-1/10} \frac{H}{L}  \,,
$$
which in turn by the bound \eqref{eq:DK_2} implies that
$$
 \# \{i \in [1,H] :  K_i \subset I_j \} \in \left[(1-q_n^{-1/100})\frac{H}{L}, (1+q_n^{-1/100}) \frac{H}{L}\right].
$$
This shows that the collection $\cS=K_1\sqcup\dots\sqcup K_H$ is $(q_n^{-1/4},q_n^{-1/100})$-uniformly distributed 
in $I$. The proof of Lemma \ref{lemma.independence} is finished in case $(I_2)$.

Assume now that $N(\theta_\cT,h)\leq q_n^{1/3}$. Notice that, since the height of the complete tower $\cT$ is $h\geq q_n^{3/5-1/10}$, we have
$$
\vphi_{N(\theta_\cT,h)+1}(\theta_\cT)\geq q_n^{3/5-1/50}.
$$
But then 
$$
\mu(\cT\cap M_\zeta)\leq q_n^{1/3}\zeta^{-1}\lambda ( B_\cT)\leq q_n^{1/2-3/5}q_n^{3/5-1/50}\lambda ( B_\cT)\leq q_n^{1/2-3/5}\mu(\cT).
$$
This finishes the proof of Lemma \ref{lemma.independence}.
\end{proof}

\end{proof}

\bg 
\section{Countable Lebesgue Spectrum} \blk
\label{subsec:CLS} 

In this section we prove a general criterion for establishing the countable Lebesgue spectral property
for smooth flows with square-integrable correlations of smooth coboundaries. From our criterion we derive
that our Kochergin flows have countable Lebesgue spectrum, thereby completing the proof of our main result,
Theorem~\ref{main} (the precise formulation of Theorem \ref{statement}). We also derive that time-changes of horocycle flows have countable Lebesgue spectrum, thereby completing the proof of the Katok-Thouvenot conjecture (see~\cite{KT}, Conjecture 6.8). In fact, it was proved in  \cite{FU}  that smooth time changes of the horocycle flow have Lebesgue maximal spectral type, but the multiplicity question was left open. 

{The section will be divided in three parts. In the first Subsection~\ref{secCILS}, we give in 
 Theorem~\ref{thm:Criterion}  an abstract Criterion for Infinite Lebesgue Spectrum (CILS),  that guarantees infinite Lebesgue multiplicity for a strongly continuous group of unitary operators on a Hilbert space having an absolutely continuous spectral type. 

 To guarantee that the multiplicity of the Lebesgue component in the spectrum is at least $n+1$, for some $n\geq 0$, the criterion requires, for any given  positive measure and bounded subset $C$ of the real line, the construction of $n+1$- functions such that the $(n+1)\times (n+1)$ matrix of Fourier transforms of their square-integrable mutual correlations has maximal rank equal to $n+1$ on $C$. The latter would indeed contradict that all the (equivalence classes of the) spectral measures in the decreasing spectral decomposition be zero on $C$ starting from the ${n+1}^{\rm st}$ measure. 

An equivalent way of presenting the hypotheses of the CILS, is to require the existence, for any $n\geq 0$,  of $n+1$ functions, such that each function is almost orthogonal to the cyclic space of any other one, and such that the spectral measures of the functions can be chosen to be not too small on any fixed bounded measurable set 
of $\R$.

In Subsection~\ref{sec6.decay}, we state in Theorem~\ref{thm:CLS}  a criterion that guarantees infinite Lebesgue multiplicity for a flow, based on the control of the decay of correlations for functions supported on tall flow-boxes with an arbitrarily thin base.  When mixing between such functions is effectively obtained at times that compares to the height of the flow-boxes, it is then possible to construct the functions as in the CILS and conclude infinite Lebesgue multiplicity. 
Indeed, we show in the same subsection  how the hypotheses of Theorem \ref{thm:CLS}  immediately imply the hypotheses of the abstract criterion in Theorem \ref{thm:Criterion}.

In Subsection~\ref{sec6.kochergin}, elaborating on the mixing estimates of Sections \ref{sec.partition} and \ref{sec.correlations}, we show that Kochergin flows (with a sufficiently degenerate singularity) typically satisfy the hypotheses of our criterion, thus completing the proof of Theorem \ref{main} (hence of Theorem~\ref{statement}), our main result. 

\smallskip
We also explain  how to derive from \cite{FU} that smooth time changes of horocycle flows satisfy the hypothesis of Theorem \ref{thm:CLS}, our criterion for infinite Lebesgue multiplicity for flows.  Since by the results of \cite{FU}  the maximal spectral type is Lebesgue, we conclude that the smooth time changes of horocycle flows also have a Lebesgue spectrum with infinite  multiplicity.

 \subsection{The Criterion for Infinite Lebesgue Spectrum (CILS)}
 \label{secCILS} }

Our criterion for countable Lebesgue spectrum of smooth flows is based on the following abstract criterion for strongly continuous one-parameter unitary groups on Hilbert spaces. 

Let $\mathcal F : L^2(\R,dt) \to L^2(\R,d\tau)$ denote the Fourier transform, given by the formula
$$
\mathcal F(f)(\tau) = \int_\R  f(t) e^{-2\pi \imath t\tau }   dt  \,, \quad \text{ for all } f\in L^2(\R, dt)\,.
$$
\begin{theorem} \label{thm:Criterion} Let $\{\phi_\R\}$ be a strongly continuous one-parameter unitary group on a Hilbert space $H$ with absolutely 
continuous spectrum. {For a fixed $n\in \N$, let us assume that  for every compact set $C\subset \R\setminus \{0\}$ of positive Lebesgue measure there exists $\epsilon_{n,C}>0$ such that 
the following holds.  For every $\epsilon \in (0, \epsilon_{n,C})$ 
there exist vectors $f_1, \dots, f_{n+1} \in H$  such that
$$
\begin{aligned} 
&\Vert \langle  \phi_t (f_i),  f_j\rangle  \Vert_{L^2(\R, dt)} \leq  \delta_{ij} +  \epsilon\,, \quad 
\text{ for all } i, j \in {1, \dots, n+1}\,; \\
&\Vert \prod_{i=1}^{n+1} \mathcal F(\langle \phi_t(f_i),  f_i\rangle) \Vert_{L^{\frac{2}{n+1}}(C)} > 
 (n+1) !   (1+ \epsilon)^n  \epsilon    
 \,.
\end{aligned}
$$
}
Then the spectral type of $\{\phi_\R\}$ is Lebesgue with multiplicity at least $n+1$.
\end{theorem}

The proof of the theorem is based on the following lemma. 

\begin{lemma}
\label{lemma:spectral_det}
Let $H$ be a Hilbert space  and let $H^{(n)}:=\oplus_{k=1}^n H_k \subset H$ denote an orthogonal, invariant decomposition into cyclic subspaces of a strongly continuous one-parameter unitary group $\{\phi_\R\}$ with absolutely continuous spectrum.  Let $f_1, \dots, f_{n+1} \in H^{(n)}$ be vectors such that the correlations
functions  $\langle  \phi_t (f_i), f_j \rangle  \in L^2(\R, dt)$. Let $\mathcal F:L^2(\R) \to L^2(\R)$ denote the
the Fourier transform. We have 
$$
\det ( \mathcal F \langle \phi_t(f_i), f_j \rangle ) = 0   \quad \text{almost everywhere. }\,
$$
\end{lemma}
\begin{proof}
Let us begin to illustrate the argument in the case $n=1$. 
Then we can assume (up to a unitary equivalence) that there is a function   $m \in L^1(\R, d\tau)$ 
such that $f_1, f_2 \in L^2(\R, md\tau)$. We therefore assume $f_1= f_1(\tau)$, and $f_2=f_2(\tau) \in L^2(\R,
md\tau)$.  The flow acts on $L^2(\R, m d\tau)$ by multiplication by $e^{2\pi \imath \tau t}$. So we have  
$$
\begin{aligned}
{\mathcal F} (\langle  \phi_t(f_i), f_i \rangle)  &=   m \vert f_i\vert^2 \,,  \quad  \text{for } i=1,2\,,   \\  
{\mathcal F} (\langle  \phi_t(f_1), f_2 \rangle) &=  m f_1 \bar{f_2} \,.
\end{aligned}
$$
We have the identity between  functions in $L^1(\R)$:
$$
\begin{aligned}
\mathcal F(   \langle  \phi_t(f_1), f_1 \rangle) \mathcal F  (\langle  \phi_t(f_2), f_2 \rangle) &= m^2 \vert f_1\vert^2 \vert f_2\vert^2
= (m f_1 \bar {f_2} ) (m f_2 \bar {f_1}    ) \\ &= \mathcal F(   \langle  \phi_t(f_1), f_2 \rangle) \mathcal F (  \langle  \phi_t(f_2) ,f_1 \rangle)\,.
\end{aligned}
$$
In the general case, let $f_{ik}$ denote the projection of the vector $f_i$  on the cyclic space 
$H_k$, for all $i\in \{1, \dots, n+1\}$ and $k\in \{1, \dots, n\}$. Since the cyclic spaces are invariant and
orthogonal, for all $i, j\in \{1, \dots, n+1\}$ we have
$$
\langle \phi_t (f_i), f_j \rangle = \sum_{k=1}^n  \langle  \phi_t(f_{ik}), f_{jk} \rangle \,.
$$
Since the spaces $H_k$ are cyclic and the group has absolutely continuous spectrum, there exist  functions $m_1, \dots, m_k \in L^1(\R,d\tau )$ and, 
for each $k\in \{1, \dots, n\}$ and all $i\in \{1, \dots, n+1\}$, there exist functions  $u_{ik} \in L^2(\R, m_k d\tau )$  
such that for all $i, j\in \{1, \dots, n+1\}$ we have
$$
\mathcal F \langle \phi_t (f_i), f_j \rangle  =   \sum_{k=1}^n u_{ik} \bar u_{jk} m_k \,.
$$
By the above formula, every column of the $(n+1)\times (n+1)$ matrix $\left(\mathcal F \langle \phi_t (f_i), f_j \rangle (\tau )\right)$ can be written as the sum of $n$ vectors as follows. For each $j\in \{1, \dots, n+1\}$, we have
$$
\begin{pmatrix} \mathcal F \langle  \phi_t( f_1), f_j \rangle (\tau ) \\ \dots \\ \mathcal F \langle  \phi_t(f_{n+1}), f_j \rangle(\tau )  \end{pmatrix} = \sum_{k=1}^n  m_k(\tau ) \begin{pmatrix}  u_{1k}(\tau ) \bar u_{jk} (\tau )  \\ \dots \\ 
u_{(n+1)k}(\tau ) \bar u_{jk}(\tau )   \end{pmatrix} =  \sum_{k=1}^n  m_k(\tau ) \bar u_{jk} (\tau ) 
 \begin{pmatrix}  u_{1k}(\tau )  \\ \dots \\ 
u_{(n+1)k}(\tau )   \end{pmatrix}    \,.
$$
Since the matrix $\left(\mathcal F \langle \phi_t (f_i), f_j \rangle(\tau )\right)$ is $(n+1)\times (n+1)$, its determinant is
a sum of determinants of matrices containing at least two columns proportional to the same vector
$(u_{1k} (\tau ), \dots, u_{(n+1) k}(\tau ) ) $. This proves that the determinant vanishes almost everywhere.
The argument is complete.

\end{proof}

\begin{proof} [Proof of Theorem \ref{thm:Criterion}]
Let $\oplus_{n\in \N} H_n$ denote an orthogonal, invariant decomposition into cyclic subspaces
such that, for all $n\in \N$, we have $H_n \approx L^2(\R, \mu_n)$  with
$$
\mu_1:= m_1 (\tau) d\tau \gg \mu_2:= m_2 (\tau) d\tau \gg  \dots \gg \mu_n:= m_n (\tau) d\tau \gg \dots
$$
Let $\{f_i\}$ be a sequence of vectors in $H$ and, for each $i$, $j\in \N$, let $f_{ij}$ denote the
orthogonal projection onto $H_j$. Since the spaces $H_k$ are $\phi_\R$-invariant and mutually orthogonal,
we have, for all $i, j \in \N$, 
$$
\langle  \phi_t(f_i),  f_j \rangle = \sum_{k\in\N} \langle  \phi_t(f_{ik}),  f_{jk}\rangle \,,
$$ 
hence after taking the Fourier transform
$$
\mathcal F(\langle  \phi_t( f_i),  f_j \rangle) = \sum_{k\in\N} \mathcal F(\langle  \phi_t(f_{ik}),  f_{jk}\rangle)\,.
$$
Let us assume by contradiction that the spectrum is Lebesgue with multiplicity at most $n$. Then
there exists a  compact set $C \subset \R\setminus\{0\}$ of positive Lebesgue measure such that
$$
m_{n+1}  (\tau) = m_{n+2}(\tau)  = \dots =0 \,, \quad \text{ for all } \tau \in C\,.
$$
Let $f_1, \dots, f_{n+1} \in H$ be vectors given by the assumptions of the theorem 
Let $\bar f_1, \dots, \bar f_{n+1}  \in H^{(n)} :=H_1 \oplus \dots \oplus H_n$
denote, respectively, the orthogonal projections of vectors $f_1, \dots, f_{n+1} \in H$ onto the subspace $H^{(n)}$. Since for each $k\in \N$ the subspace $H_k$ is cyclic, the Fourier transform of the correlation $\langle  \phi_t(f_{ik}),  f_{ik} \rangle$ is absolutely continuous (as a density) with respect to the measure
$\mu_k$ on $\R$. Hence we derive, for all $i, j\in \{1, \dots, n+1\}$ the identity
$$
\mathcal F (\langle    \phi_t (f_i),   f_j \rangle) (\tau) =  \mathcal F (\langle  \phi_t( \bar f_{i}) ,  \bar f_{j}\rangle)(\tau)\,,
\quad \text{ for almost all } \tau \in C\,.
$$
It follows that, by Lemma \ref{lemma:spectral_det},  for all $i,j\in \{1, \dots, n+1\}$, we have that
\begin{equation}
\label{eq:det_id}
\det ( \mathcal F ( \langle  \phi_t (f_i),  f_j \rangle)(\tau   )  = 
\det ( \mathcal F ( \langle \bar \phi_t (f_i),  \bar f_j \rangle)(\tau   ) =0 \,, \quad \text{ for almost all } \tau \in C\,.
\end{equation}
{  By H\"older inequality, for any $p>1$ the product of $n+1$ functions in $L^p(\R)$ belongs to  $L^{\frac{p}{n+1}} (\R)$ and we have
\begin{equation}
\label{eq:Holder}
\Vert    \prod_{i=1}^{n+1}  g_i   \Vert_{L^{\frac{p}{n+1}}(\R)} \leq \prod_{i=1}^{n+1}  \Vert   g_i \Vert_{L^p(\R)}\,.
\end{equation}
Since  the determinant of a $(n+1) \times (n+1)$ matrix is a polynomial of degree $n+1$ in the entries
of the matrix, the determinant of the matrix $(\mathcal F \langle \phi_t (f_i), f_j \rangle)$ 
belongs to $L^{\frac{2}{n+1}} (\R)$. }

By the assumptions on the vectors $f_1, \dots, f_{n+1}$ we have
$$
\Vert \mathcal F \langle \phi_t (f_i), f_j\rangle \Vert_{L^2(C, d\tau)} \leq  
\Vert \mathcal F \langle \phi_t (f_i), f_j \rangle \Vert_{L^2(\R, d\tau)} = \Vert \langle \phi_t (f_i), f_j \rangle\Vert_{L^2(\R, dt)} \leq \delta_{ij} + \epsilon\,,
$$
hence, by  formula \eqref{eq:det_id}, by the expansion of the determinant, and by the estimate in formula
\eqref{eq:Holder}, 
\begin{equation}
\label{eq:epsilon}
\begin{aligned}
\Vert  \prod_{i=1}^{n+1} \mathcal F ( \langle  \phi_t (f_i),  f_i\rangle) \Vert_{L^{\frac{2}{n+1}}(C)}&= \Vert \det  \mathcal F ( \langle  \phi_t (f_i),  f_j \rangle) -  \prod_{i=1}^{n+1} \mathcal F ( \langle  \phi_t (f_i),  f_i\rangle) \Vert_{L^{\frac{2}{n+1}}(C)} \\ &\leq  (n+1) !   (1+ \epsilon)^n  \epsilon\,.
\end{aligned} 
\end{equation}
However, by assumption we also have
$$
\Vert  \prod_{i=1}^{n+1} \mathcal F ( \langle  \phi_t (f_i),  f_i\rangle) \Vert_{L^{\frac{2}{n+1}}(C)} 
> (n+1) !   (1+ \epsilon)^n  \epsilon   \,,
$$
a contradiction with the upper bound in formula \eqref{eq:epsilon}. The argument is thus complete.
\end{proof}

We give now a version of the CILS { that is well adapted to derive countable Lebesgue spectrum from mixing estimates for Kochergin flows and horocycle flows} (that is, from Theorem \ref{thm:CLS} below). 

\begin{corollary} \label{cor:criterion} Let us assume that for every $n \in \N$, for any even functions $\omega_1, \dots, \omega_{n+1} \in \mathcal S(\R)$ (the Schwartz space), and for any any~$\epsilon >0$,  there exists vectors $f_1, \dots, f_{n+1} \in H$ such that, for all $i, j \in \{1, \dots, n+1\}$, we have
$$
\Vert \langle  \phi_t(f_i),  f_j \rangle  -  \frac{d^2}{dt^2} \omega_i \ast \omega_i(t) \delta_{ij}  \Vert_{L^2(\R)} \leq \epsilon\,.
$$
Then the spectral type of the strongly continuous one-parameter unitary group $\phi_\R$ is Lebesgue with countable multiplicity.
\end{corollary} 
\begin{proof}  Let $C$ be a given compact subset of $\R\setminus \{0\}$ of positive Lebesgue measure.
By the Lebesgue density theorem, it is not restrictive to assume that there exists an interval $[a, b]$ with $0<a<b$
 such that $\text{Leb} (C \cap [a,b]) \geq (b-a)/2$.  The case when $C \cap \R^+ =\emptyset$ is similar.
 Let $\chi_C: \R \to [0, 1]$ denote any  smooth odd function with compact support in $[-2b, -a/2] \cup [a/2, 2b]$ such that $\chi^2_C \equiv 1$  on $[-b,-a] \cup [a,b]$.  For all $i\in \{1, \dots, n+1\}$ let  $\omega_i$ be the function determined by the identity
 $$
 \mathcal F ( \omega_i ) (\tau) = \frac{1}{\tau}   \frac{ \chi_C (\tau)}{\Vert \chi^2_C\Vert^{1/2}_{L^2(\R)}} \,, \quad \text{ for all } i\in \{1, \dots, n+1\}\,.
 $$
 The functions $\omega_i$ are all even, and we can take $f_1,\ldots,f_{n+1}$ as in the statement of the corollary. We then verify  that the hypotheses of Theorem \ref{thm:Criterion}  hold.  In fact, we have
$$
\begin{aligned}
\Vert   \mathcal F( \langle  \phi_t(f_i),  f_j \rangle)  &-  \mathcal F(\frac{d^2}{dt^2} \omega_i \ast \omega_i) \delta_{ij}  \Vert_{L^2(\R)}  \\ &= \Vert \mathcal F(  \langle  \phi_t(f_i),  f_j \rangle)  -  \frac{\chi_C^2 }{ 
\Vert \chi_C^2 \Vert_{L^2(\R)}}  \delta_{ij}  \Vert_{L^2(\R)}  \leq \epsilon\,,
\end{aligned}
$$
hence in particular
$$
\Vert \mathcal F(\langle   \phi_t(f_i),  f_j \rangle) \Vert_{L^2(\R)} \leq \delta_{ij} + \epsilon\,, \quad
\text{ for all } i, j\in \{1, \dots, n+1\}\,.
$$
By the construction and by the H\"older inequality bound of formula \eqref{eq:Holder} we have
$$
\Vert \prod_{i=1}^{n+1}  \mathcal F( \langle  \phi_t(f_i),  f_i \rangle) -  \left( \frac{\chi_C^2 }{ 
\Vert \chi_C^2 \Vert_{L^2(\R)}}   \right)^{n+1}  \Vert_{L^{\frac{2}{n+1}} (\R)} \leq  2^n (1+\epsilon)^{n-1}
\epsilon\,,
$$
hence by convexity we derive that
$$
\begin{aligned}
\Vert \prod_{i=1}^{n+1}  \mathcal F( \langle  \phi_t(f_i),  f_i \rangle) \Vert^{\frac{2}{n+1}}_{L^{\frac{2}{n+1}} (C)}
&\geq   \Vert  \left( \frac{\chi_C^2 }{ 
\Vert \chi_C^2 \Vert_{L^2(\R)}}   \right)^{n+1} \Vert ^{\frac{2}{n+1}}_{L^{\frac{2}{n+1}} (C)} - 
[2^n(1+\epsilon)^{n-1} \epsilon]^{\frac{2}{n+1}  } \\  & \geq
\left(\frac{ \Vert \chi^2_C \Vert_{L^2(C)} }{ \Vert \chi^2_C \Vert_{L^2(\R)}   } \right)^2
  - [2^n(1+\epsilon)^{n-1} \epsilon]^{\frac{2}{n+1}  }\,.
\end{aligned}
$$
From the above estimate we conclude that 
$$
\Vert \prod_{i=1}^{n+1}  \mathcal F( \langle  \phi_t(f_i),  f_i \rangle) \Vert_{L^{\frac{2}{n+1}} (C)} >  
(n+1) !   (1+ \epsilon)^n  \epsilon \,,
$$
for all $\epsilon>0$ such that
$$
\left(\frac{ \Vert \chi^2_C \Vert_{L^2(C)} }{ \Vert \chi^2_C \Vert_{L^2(\R)}   } \right)^2
  > [2^n(1+\epsilon)^{n-1} \epsilon]^{\frac{2}{n+1}} +  (n+1) !   (1+ \epsilon)^n  \epsilon \,.
$$
By Theorem \ref{thm:Criterion} it follows that the strongly continuous one-parameter unitary group $\{\phi_\R\}$ has Lebesgue spectrum
with multiplicity at least $n$. Since $n\in \N$ is arbitrary, it has Lebesgue spectrum with countable multiplicity.
\end{proof}

 \subsection{Decay of correlations and infinite Lebesgue multiplicity.}
\label{sec6.decay}

As explained in the introduction of this section, we now give a criterion based on decay of correlations that allows to construct the functions that as required in the CILS to guarantee infinite Lebesgue multiplicity. The idea is to guarantee mixing between functions supported on tall flow-boxes with thin base $J$ after a time that is comparable to the height of the flow-boxes.  Indeed by fixing such a flow-box with base $J\subset M$ and  height $T_J>0$, we can choose functions supported on this flow-box in an arbitrary way so as to guarantee the satisfaction of the CILS conditions up to some finite time comparable to $T_J$. After time $T_J$ it is the effective mixing between functions supported on such flow-boxes that insures the complete satisfaction of the CILS conditions. 

One additional technical point is that our mixing estimates only hold for coboundaries, hence we have to define  corresponding classes of functions supported on tall and thin flow-boxes. 
For Kochergin flows, one extra technical difficulty is that mixing is effectively controlled only away from the singularity (and for technical reasons related to our proof, away from the ceiling function). Hence the family of functions we need to consider are not just supported on tall flow-boxes with thin bases, but also have to vanish on a small measure set inside these flow-boxes that correspond to a small neighborhood of the origin (and of the ceiling function). {The latter difficulty is not present in our application of the CILS to time changes of horocycle flows. }

Let $\{T^t\}$ be a smooth aperiodic flow on a smooth manifold $M$, preserving a smooth volume form of finite
total volume.  For any given transverse embedded closed  multi-dimensional interval $J \subset M$,  let $T_J$ be the maximal real number $T>0$ such that the map
\begin{equation}
\label{eq:flow_box}
F^T_J (x,t) = T^t (x,0)         \,, \,\, \text{ \rm for all }  (x,t) \in J \times (-T,T)\,,
\end{equation}
is a flow-box for the flow $\{T^t\}$.   The flow-box $F_J := F^{T_J}_J$ will be called a {\it maximal flow-box}
over the (basis) interval $J\subset M$.
Since the flow $\{T^t\}$ has no periodic orbits,  for any $T_0>0$ there exists an interval $J$ such that $T_J>T_0$. 

Let $\cal M:=\{M_\zeta \vert \zeta\in (0,1)\}$ be a fixed family of open subsets of $M$ such that 
$\cap_{\zeta>0} M^c_\zeta$ is a closed subset $M_0$ of zero-measure.  In the case of Kochergin flows
this is the family  introduced in formula~\eqref{Mzeta} of Section~\ref{sec.decay}.
Given a flow-box  {$F_J^T$}, we define, for any $\zeta>0$, the set $S^T_\zeta(J) \subset \R$ 
as follows
$$
S^T_\zeta(J):=\{ t\in (-T,T) \;:\;  T^t (J) \cap M_\zeta^c =\emptyset\}\,.
$$ 
 By definition we have that $S^T_\zeta(J)$ is an open subset (which in general may be empty).

\smallskip
In the sequel, we will focus our attention  on very long and thin maximal flow boxes, that spend most of the initial time away from the bad sets $M_\zeta^c$, for $\zeta>0$ sufficiently small. This motivates the following definition. 

\begin{definition}
\label{def:admissible}
A family $\Phi= \{F_J\}$ of maximal flow-boxes  is called {\em admissible} if for every $T>0$ and $\nu>0$, there exist $N \in \N$ and $\tau>0$, and $\zeta >0$  such that  for all maximal flow-boxes $F_J\in \Phi$  with $T_J > \tau$, the set $S^T_\zeta(J)$ has at most $N$ connected components, and 
 \begin{equation}
 \label{eq:admissible}
{\text{Leb} ((-T,T)\setminus S^T_\zeta(J))} \leq  \nu\,.
 \end{equation}
 \end{definition} 
 

\begin{remark}
 It should be noted that, in the special case when the bases of the maximal flow-boxes of a family 
 $\Phi= \{F_J\}$ form a decreasing sequence $\{J\}$ of intervals with respect to the inclusion, then in order to establish that $\Phi$ is admissible it is enough to verify the conditions for all the degenerate flow-boxes (the orbits segments) over the singleton equal to the intersection of all of their bases. Our construction below of admissible families of maximal flow-boxes for Kochergin flows (in~Section~\ref{sec6.kochergin}) is based on this principle.
\end{remark}

 \smallskip
For any $k\in \N \setminus \{0\}$, and for any constants $C>0, \zeta>0$,  we define $G_k(J,T,C,\zeta)$ to be the set of all functions $\psi_J \in C^\infty_0 (J \times (-T, T)    )$  defined as  follows.  Let  $\chi_J \in C_0^\infty(J)$ be any smooth function such that $\int_J \chi^2_J d\lambda =1$,  with $C^s(J)$ norm bounded above by  $C/\lambda (J)^{s+1/2}$ for all $s\in \{0, \dots, k\}$, and let $\psi \in C^\infty_0 ( S^T_\zeta(J))$ is any smooth function with $C^k$ norm on $\R$ bounded above by $C$. 
 We can now define the functions  supported on flow-boxes that we will be working with. 

\begin{definition}
\label{def:test_functions}
Given a flow-box $F_J^T$ and constants $C,\zeta>0$, we define $\mathcal G_k(F_J^T,C,\zeta)$ to be the class  of all functions  $g_J\in C^\infty(M)$, defined on the range $R^T_J$ of the flow-box map $F^T_J$ as 
\begin{equation}
\label{eq:u_def1}
(g_J \circ   F^T_J) (x, t):=   \chi_J(x)  \psi (t) \,,  \quad  \text{ \rm if }  (x,t) \in J \times (-T,T) \,,\\
\end{equation} 
for any $\psi  \in G_k(J,T,C,\zeta)$, and  defined as $g_J:=0$ on $M \setminus R^T_J$.

\smallskip
The class $\mathcal F_k(F^T_J,C,\zeta) \subset \mathcal G_k(F_J^T,C,\zeta)$ { consists of all functions $f_J
\in \mathcal G_k(F_J^T,C,\zeta)$ which are derivatives along the flow.}
\end{definition} 

We can now state our general criterion, based on correlation decay, for countable Lebesgue spectrum.

\begin{theorem} \label{thm:CLS}  Let $\{T^t\}$ be a smooth, aperiodic, volume-preserving ergodic flow with absolutely continuous spectrum on a smooth manifold $M$ of finite total volume.   Assume that there exists
an admissible  family of maximal  flow-boxes $\Phi:=\{F_J\}$  for the flow, such that $\inf \lambda (J)=0$ (hence 
$\sup  T_J=+ \infty$) and there exists $k \in \N \setminus \{0\}$ such that given any  
$T>0$, $C>0$ and $\zeta >0$, for any family $\{(f_J, g_J)\}$  of pair of functions such that $f_J \in \mathcal F_k(F_J^T,C, \zeta)$ and $g_J \in \mathcal G_k(F_J^T,C,\zeta)$ we have 
$$
\inf_{F^J\in \Phi}   \int_{\R\setminus [-T_J, T_J]} \vert \langle f_J \circ T^t , g_J \rangle\vert^2  dt =0\,.
$$
Then the flow $\{T^t\}$ has countable Lebesgue spectrum. 
\end{theorem} 

We will derive the criterion from Corollary \ref{cor:criterion}. Since we only control the decay of correlations for functions  in the classes $\mathcal F_k(F_J^T,C, \zeta)$ and $\mathcal G_k(F_J^T,C,\zeta)$, we first prove below a simple approximation lemma to approximate the target even functions $\omega_1, \dots, \omega_{n+1} \in \mathcal S(\R)$ of Corollary~\ref{cor:criterion} by (even) functions supported inside sets of the type $S^T_\zeta(J)$. For technical reasons that will appear below in the proof of the approximation lemma, we prefer to first symmetrize the set $S^T_\zeta(J)$ and  consider instead functions supported in $S^T_\zeta(J) \cap (-S^T_\zeta(J))$.

\begin{lemma} \label{lemma.psi} Let $\Phi= \{F_J\}$ be {\em admissible}. Then, for every $k\in \N$,  $\epsilon>0$,  
for every even function $\omega \in \mathcal S(\R)$, there exist $\tau>0$ such that for every $T\geq \tau$, there exists $\zeta>0$ such that  if $J$ is such that $T_J>T$, there exists an even function $\psi \in C^\infty_0 (-T,T) $ such that  $\frac{d\psi}{dt} \in  C^\infty_0 ( S^T_\zeta(J)\cap (- S^T_\zeta(J)))$ with  $C^{k}$ norm \ignore{uniformly} bounded  above by a constant $C:=C(k,\epsilon,\omega,T)>0$ (crucially) independent of the flow-box $F_J\in \Phi$, such that
\begin{equation}  
\label{eq:conv_approx_2}
\Vert \frac{d^2}{dt^2} (\psi \ast \psi) - \frac{d^2}{dt^2} (\omega \ast \omega)  \Vert_{L^2(\R)} <  \epsilon\,. 
\end{equation} 

\end{lemma}
\begin{proof} By properties of convolution we can write
$$
\begin{aligned}
\frac{d^2}{dt^2} (\psi \ast \psi) - \frac{d^2}{dt^2} (\omega \ast \omega)&= 
\frac{d\psi}{dt} \ast \frac{d\psi}{dt}  -  \frac{d\omega}{dt} \ast \frac{d\omega}{dt}
\\ &=(\frac{d\psi}{dt} +  \frac{d\omega}{dt}) \ast (\frac{d\psi}{dt} -\frac{d\omega}{dt}   ) \,,
\end{aligned} 
$$
hence, by Young's convolution inequality,  we have 
\begin{equation}
\label{eq:Young_est}
\Vert \frac{d^2}{dt^2} (\psi \ast \psi) - \frac{d^2}{dt^2} (\omega \ast \omega)  \Vert_{L^2(\R)}  \leq
\Vert  \frac{d\psi}{dt} +  \frac{d\omega}{dt} \Vert_{L^1(\R)} \Vert  \frac{d\psi}{dt} -  \frac{d\omega}{dt} \Vert_{L^2(\R)} \,.
\end{equation}
It is therefore enough to construct functions $\psi$ such that $ \frac{d\psi}{dt}$ are $L^2$ approximations of the function $ \frac{d\omega}{dt}$ with bounded $L^1$ norm, and are supported in $ S^T_\zeta(J)$.

\smallskip
{ By the definition of an admissible family of maximal flow-boxes, for every $T>0$ and $\nu>0$, there exist $\tau, \zeta >0$ and $N \in \N$ such that for all maximal flow-boxes $F_J\in \Phi$  with $T_J>\tau$ we have that $J\subset M_\zeta$, the symmetric set $S^T_\zeta(J) \cap (- S^T_\zeta(J)) =I_1 \cup \ldots \cup I_N$ is a union of (at most) $N$ open intervals  and $\text{Leb}((-T, T) \setminus I_1 \cup \ldots \cup I_N )\leq  \nu/4$.  
Let  then $\{I'_1, \dots, I'_N\}$ be a symmetric family of closed subintervals such that $I'_i \subset I_i$ , for each $i\in \{1, \dots, N\}$,  and 
\be\label{eq:bouv}
\text{Leb}((-T, T) \setminus (I'_1 \cup \ldots \cup I'_N ))\leq  \nu/2\,.
\ee
In order to control the norm of higher derivatives of the function $\psi$, we also choose the family
$\{I'_i\}$ such that, for all $i\in \{1, \dots, N\}$, 
$$
\text{ \rm dist} (\partial I_i, \partial I'_i)  \geq   \frac{\nu}{10 N} \,.
$$
We claim that there exists an even function $\psi \in C_0^\infty ((-T, T))$ such that
\begin{itemize}
\item $\frac{d\psi}{dt}$ is an odd smooth function supported inside $S^T_\zeta(J) \cap (-  S^T_\zeta(J) )$,
\item $\frac{d\psi}{dt}=\frac{d\omega}{dt}$ on $ (I'_1 \cup \dots \cup I'_N) \cap [-T+\nu/4 , T-\nu/4]$,
\item $\vert \frac{d\psi}{dt} (t) \vert \leq  \vert \frac{d\omega}{dt}(t) \vert$,  for all $t\in \R$,
\item $\| \frac{d\psi}{dt} \|_{C^k(\R)} \leq C_k  \| \omega \|_{C^{k+1}(\R) } (\frac{\nu}{10 N})^{-k}$,  for all $k\geq 1$ 
\end{itemize}
(with $C_k>0$ a constant depending only on $k\in \N$).}

It follows from formula \eqref{eq:Young_est}  and by H\"older inequality that  we have
$$
\begin{aligned}
\Vert \frac{d^2}{dt^2} (\psi \ast \psi) - \frac{d^2}{dt^2} (\omega \ast \omega)  \Vert_{L^2(\R)}  \leq
2 \Vert  \frac{d\omega}{dt} \Vert_{L^1(\R)}
\left(  2 \Vert  \frac{d\omega}{dt} \Vert_{C^0(\R)} \nu^{1/2}   +   \Vert \frac{d\omega}{dt} \Vert_{L^2(\R\setminus(-T,T))} \right) \,.
\end{aligned}
$$
Hence for every $\omega \in \mathcal S(\R)$ and for every $\epsilon >0$, there exists
$T_\epsilon>0$ and $\nu_\epsilon>0$ such that, for all $T > T_\epsilon$ and all $\nu<\nu_\epsilon$, we have
$$
\Vert \frac{d^2}{dt^2} (\psi \ast \psi) - \frac{d^2}{dt^2} (\omega \ast \omega)  \Vert_{L^2(\R)} <\epsilon\,.
$$
We have thus reduced the proof of the lemma to that 
of the above claim. 

In order to prove the claim we consider an even function $\phi_\nu \in C_0^\infty((-T,T))$  such that 
\begin{itemize}
\item $\phi_\nu (t) \in [0,1]$, for all $t\in \R$,
\item $\phi_\nu(t) =0$, for all $t \not\in S^T_\zeta(J) \cap (-  S^T_\zeta(J) ) =
I_1 \cup \dots \cup I_N$,
\item $\phi_\nu(t)= 1$, for all $t \in (I'_1 \cup \dots \cup I'_N) \cap [-T+\nu/4, T-\nu/4]$,
\item $\| \phi_\nu \|_{C^{k}(\R)} \leq C_k  (\frac{\nu}{10N})^{-k}$ \,,
\end{itemize}
and then we define
$$
\psi(t) = \begin{cases} \int_{-T}^t  \phi_\nu(s) \frac{d\omega}{dt}(s) ds \,, \quad &\text{ for all } t \in (-T,T)\,, \\
0    \,, \quad  &\text{ for all } t \in \R\setminus (-T,T)\,.
 \end{cases} 
$$
The function $\psi \in C^\infty_0(-T,T)$ since the function $\phi_\nu \frac{d\omega}{dt}$ is odd, hence all of its primitives are even, and has compact support in $(-T,T)$, so that $\psi$ is the unique primitive which vanishes on the complement of $(-T,T)$. Finally, it is straightforward to verify that $\psi$ satisfies all the properties listed in the claim. 
\end{proof} }

\medskip 
\begin{proof}[Proof of Theorem \ref{thm:CLS}]
Let us fix $\eps>0$ and any given number $n+1\in \N \setminus\{0\}$ of even Schwartz functions $\omega_1, \dots, \omega_{n+1} \in \mathcal S(\R)$.   
Let $\Phi=\{F_J\}$ be an admissible family of maximal flow-boxes. 

By Lemma \ref{lemma.psi}  there exist $T$, $\tau>0$ (large) and  $\zeta>0$ (small) such that  if $J$ is such that $T_J>\tau$, there exist even functions $\psi_i \in  C^\infty_0 ( (-T,T)), i=1,\ldots,n+1,$ such that  $\frac{d\psi_i}{dt}  \in  C^\infty_0 ( S^T_\zeta(J))$ with  $C^{k+1}$ norm uniformly bounded  above by a constant $C':=C'(k,\epsilon,\omega_1,\ldots,\omega_{n+1},T)>0$ such that 

\begin{equation}  
\label{eq:conv_approx_3}
\Vert \frac{d^2}{dt^2} (\psi_i\ast \psi_i) - \frac{d^2}{dt^2} ( \omega_i \ast  \omega_i)  \Vert_{L^2(\R)} <  \epsilon/2\,. 
\end{equation} 
Let  now $\chi^{(1)} _J, \dots, \chi^{(n+1)} _J \in C^\infty(J)$ be functions  such that 
$$
\int_J   \chi^{(i)}_J  \chi^{(j)}_J   d\lambda  =   \delta_{ij}  \,,  \quad  \text{for all } i,j \in \{1, \dots, n+1\}\,,
$$
with $C^s$ norm bounded above by $C''/\lambda ( J)^{s+1/2}$ on $J$ for all $s\in \{0, \dots, k\}$  (this is possible provided that the constant $C''$ is taken to be larger than some constant that only depends on $n$). 

Let $C>\max \{C', C''\}$.
For every $i\in \{1, \dots, n+1\}$,  let $f^{(i)}_J \in \mathcal F_k(F_J^T,C,\zeta)$  be the function defined
on the range $R^T_J$ of the flow-box map $F^T_J$ as 
\begin{equation*}
\label{eq:u_def2}
f^{(i)}_J \circ   F^T_J (x, t) :=   \chi^{(i)}_J (x) \frac{d}{dt} \psi_i (t) \,, \quad 
\text{ \rm if } \,\, (x,t) \in J \times (-T, T)\,,
\end{equation*} 
and defined as  $f^{(i)}_J=0$ on $M\setminus R^T_J$. 

\smallskip
We then compute the correlations. Let $T_J /2 > \max \{T, \tau/2\}$. For all $t \in [-T_J,T_J]$ we have (since the functions $\psi_1, \dots, \psi_{n+1}$ are all even)
\begin{equation}
\label{eq:correlations}
\begin{aligned}
\langle f^{(i)}_J\circ T^t, f^{(j)}_J \rangle  &=\int_J \int_{-T}^T   \chi^{(i)}_J(x) \chi^{(j)}_J(x)  
\frac{d\psi_i}{dt} (\sigma+t)   \frac{d\psi_j}{dt}(\sigma) d\sigma  dx
\\ &=  (\frac{d\psi_i}{dt} \ast   \frac{d\psi_j}{dt}) (t)   \, \delta_{ij}  = \frac{d}{dt^2} (\psi_i\ast \psi_j)(t) \delta_{ij}  \,.\end{aligned}
\end{equation}
and by the assumption of the theorem,  if $\lambda(J)$ is small enough,  for every $i,j\in \{1,\ldots,n+1\}$ we have:
\begin{equation} \label{eq.outside1}
 \Vert \langle f^{(i)}_J\circ T^t, f^{(j)}_J \rangle  \Vert_{L^2(\R\setminus [-T_J, T_J])} \leq \epsilon/2\,.\end{equation}
Note that, since the functions $\psi_i$ are supported in $[-T, T]$ and $T<T_J/2$, we also have
\begin{equation} \label{eq.outside2} \frac{d}{dt^2} (\psi_i\ast \psi_j)(t) \delta_{ij}  =0, \text{ for } t\in \R\setminus [-T_J, T_J]. 
\end{equation}
By putting together formulas~ \eqref{eq:conv_approx_3}--\eqref{eq.outside2}, 
it follows that  if $\lambda(J)$ is small enough (hence $T_J$ is large enough), the functions $f^{(i)}_J$, with $i \in \{1, \dots, n+1\}$, satisfy the assumptions of Corollary \ref{cor:criterion}:
$$
\Vert \langle f^{(i)}_J\circ T^t, f^{(j)}_J \rangle  -\frac{d^2}{dt^2}(\omega_i \ast \omega_j) \delta_{ij}  \Vert_{L^2(\R)} \leq \epsilon\,.
$$
It follows then by Corollary~\ref{cor:criterion} that, under the hypotheses of the theorem, the flow $\{T^t\}$ has countable Lebesgue spectrum,  hence the argument
is completed. \end{proof}

\subsection{CILS for Kochergin flows and time changes of horocycle flows} \label{sec6.kochergin}

We prove below that the hypotheses of Theorem~\ref{thm:CLS}  are verified for Kochergin flows $\{T_{\alpha, \varphi}^t\}$.  

 Let $\cal M= \{M_\zeta\vert \zeta>0\}$ denote the family introduced in formula \eqref{Mzeta} of Section~\ref{sec.decay}.

\begin{theorem} \label{thm.koc}  For every Kochergin flow $\{T_{\alpha, \varphi}^t\}$ with $\a \in D_{\log,\xi}$, $\xi<\frac{1}{10}$,  there exists an admissible family of maximal flow-boxes $\{F_J\}$, over 
a decreasing sequence $\{J\}$ of intervals satisfying $\lim \lambda (J) \to 0^+$ (hence $T_J\to + \infty$), such that the following holds.  For any $T>0$, for any $C$ and $\zeta>0$,  for any sequence of pair of functions 
$\{(f_J, g_J)\}$ such that $f_J \in \mathcal F_1(F_J^T,C,\zeta)$ and $g_J \in \mathcal G_1(F_J^T,C,\zeta)$ we have 
$$
\lim_{\lambda(J) \to 0^+}  \int_{\R\setminus [-T_J, T_J]} \vert \langle f_J \circ T_{\alpha, \varphi}^t , g_J \rangle\vert^2  dt =0\,.
$$
\end{theorem} 
\begin{proof} 
\noindent  We consider the following family of maximal flow boxes. We fix $\theta_0$ that is not in the orbit of $0$ by $R_\a$ on the circle and take any sequence of basis intervals $\{J\} \subset \{J_m\}$ with
$$
 J_m := [\theta_0- \frac{1}{10q_m}, \theta_0+ \frac{1}{10q_m}] \times \{0\} \,, \quad \text{ for all } m\in \N.
$$
Since the sequence $\{J_m\}$ is decreasing with respect to the inclusion, it is immediate to prove that the
family $\{F^{J_m}\}$ of maximal flow-boxes is admissible by verifying that the conditions hold for all the degenerate flow-boxes (orbit segments) $F^T _{J_\infty}:= \{T_{\alpha, \varphi}^t(\theta_0, 0) \vert t\in [-T, T]\}$ over the degenerate interval $J_\infty:=\{(\theta_0, 0)\} \subset M$.   Indeed, by the definition of the family 
$\{M_\zeta\}$ (in formula~\eqref{Mzeta} of Section~\ref{sec.decay})  for any $T>0$ and $\nu >0$ there exists $\zeta>0$ such that  $(\theta_0, 0) \in M_\zeta$, the orbit segment $F^T _{J_\infty}$ does not intersect the interval $[-\zeta,\zeta]\times \{0\}$. Therefore the set {$S_\zeta^T (J_\infty)=F^T _{J_\infty}$ is equal to a finite union of $N:= N_T$ intervals such that $\text{Leb} ((-T,T)\setminus S^T_\zeta(J_\infty)) \leq (2T)(2\zeta)$, hence the estimate in formula~\eqref{eq:admissible} of the definition of an admissible family of maximal flow-boxes holds if $4T\zeta<\nu$.}

We also observe that  by construction there exists a constant $c>0$ such that $T_{J_m}\geq cq_m$, for all $m\in \N$,  which implies that for any $J \in \{J_m\}$ we have
\begin{equation}
\label{eq:T_J}
\lambda (J)  \geq      \frac{ c} {5 T_{J }}\,.
\end{equation}
 For any $J \in \{J_m\}$, let us fix {$T \in (0,T_{J}^{1/2})$.} We want to prove a bound on the correlations for any pair of functions $f_J \in \mathcal F_1(F_J^T,C,\zeta)$ and $g_J \in \mathcal G_1(F_J^T,C,\zeta)$, where $C$ is a fixed constant, and derive the  vanishing of the limit in the statement of the theorem.
In the sequel  the symbols $C'$, $C''$  will denote generic universal constants independent of $J$ (but dependent on $T>0$), and that  also  depend on the constant $C$ in the classes of functions $\mathcal F_1(F_J^T,C,\zeta)$ and $\mathcal G_1(F_J^T,C,\zeta)$.
Let then $f_J \in \mathcal F_1(F_J^T,C,\zeta) $ and $g_J \in \mathcal G_1(F_J^T,C,\zeta) $. By Definition \ref{def:test_functions}, the functions $f_J$ and $g_J$ are given by 
$$
f_J \circ F_J^T (x,t) = \phi_J  (x,t)= \chi_J (x) \phi(t)  \quad \text{ and }  \quad g_J \circ F_J^T (x,t) = \psi_J  (x,t)= \chi_J (x) \psi(t) 
$$ 
on $R_J^T$, with $\phi, \psi \in G_1 (J,T, C, \zeta)$ and $\phi$ a derivative, and $f_J=g_J=0$  on $M\setminus \R_J^T$. 

Since the function $g_J$ is supported on the range $R^T_J$ of the flow-box map $F^T_J$  it is enough to prove bounds on 
$$
\int_{R^T_J}  f_J\circ T^t_{\alpha, \vphi} (x) g_J(x) d\mu \,.
$$
Let then $\vert t  \vert \geq T_J$.  WLOG we can assume $t>0$ since the argument for $t<0$ is similar. As in Subsection~\ref{Proof_tdec} we split the estimate into two parts: the integral
over the complement of the bad set $\mathcal B_l$ (see formula~\eqref{def:b} in Subsection~\ref{been} for its definition), and the integral over the bad set.  
\begin{claim}
\label{Koch_Claim1}
There exists $C>0$ such that, for some $\eps>0$ and for all $J \in \{J_m\}$, we have (recalling that $T$ is fixed and $T_J\to +\infty$) 
\begin{equation}
\label{eq:Koch_CILS_good}
\vert \int_{R^T_J\setminus \mathcal B_l}  f_J\circ T^t_{\alpha, \vphi} (x) g_J(x) d\mu \vert  \leq C  \,t^{-1/2-\eps}\,.
\end{equation}
\end{claim}

\begin{proof} Let us recall the partitions $\cI_k$ of $\T$ into intervals with endpoints $\{-i\a\}_{i=0}^{q_k-1}$ and 
$W$ introduced (see formula \eqref{defw}) at the beginning of  Section \ref{est.birk}. By the assumption 
that $t \geq T_J$, by formula \eqref{eq:T_J} there  exists a constant such that $ t \geq C'/2\lambda ( J)$. 
It follows that there exists a product set  $E^T_{J,k}\in W$ with base $\bar E^T_{J,k}$ measurable with respect to the partition $\mathcal I_k$, such that  $R^T_J \subset  E^T_{J,k}$ and we have
$$
\mu (E^T_{J,k}) \leq C' \mu (R^T_J) = C' T \lambda ( J)\,.
$$
By construction there exists a constant $C''>0$ such that  
$$
\begin{aligned}
\Nzero(f_J, g_J) &= \Vert f_J \Vert_0 \Vert g_J \Vert_0 \leq \frac{C''}{\lambda ( J)} \Vert \phi\Vert_0 \Vert \psi\Vert_0  \,; \\
\None (f_J, g_J)&=  (\Vert f_J \Vert_0 + \Vert \phi_J\Vert_0) \Vert g_J \Vert_1 +(\Vert f_J \Vert_1 + \Vert \phi_J\Vert_1) \Vert g_J \Vert_0 \leq \frac{C''}{\lambda ( J)^2}
\Vert \phi\Vert_2 \Vert \psi\Vert_1 \,.
\end{aligned}
$$
Hence it  follows from Proposition~\ref{tdecbis} that
$$
\left| \int_{E^T_{J,k}\setminus \mathcal B_l}f_{J}(T^t_{\a,\vphi}(x,s))g_{J}(x,s)d\mu \right| <   C''\left (C' T  \Vert \phi\Vert_0 \Vert \psi\Vert_0  +  (C' T)^2 \Vert \phi\Vert_2 \Vert \psi\Vert_1  \right) t^{-1/2-\eps}\,.
$$
The bound in formula~\eqref{eq:Koch_CILS_good} is therefore proved and the proof of Claim~\ref{Koch_Claim1}
is completed.
\end{proof}

\smallskip
It remains to estimate the integral on the bad set $\mathcal B_l \cap R^T_J$.  Let $t\in [l^{21/20}, (l+1)^{21/20}]$ with $l\in \N$.  Let us recall the notation $l_0=l^{21/20}$, $l_1= (l+1)^{21/20}$ and let $n\in \N$ be the unique natural number such that $q_n < l_0 <q_{n+1}$. Let then $k\in \N$ be such the $q_k\in [q_n \log^{15} q_n, q_n \log^{20} q_n ]$.

\begin{claim}
\label{Koch_Claim2} There exists a constant $C>0$ such that, for all $l_0 >T_J$, we have
\begin{equation}
\label{eq:spec_type_bad_0}
\int_{l_0}^{l_1}\left|\int_{\mathcal B_l \cap R^T_J  }f_J(T^t_{\a,\vphi}(x))g_J(x)d \mu\right| dt
\leq  C \frac{l_1-l_0}{q_n^{1/2+ 15\eta}} \,.
\end{equation}
\end{claim} 
\begin{proof}
We follow the proof of Proposition \ref{towdec} in Subsection \ref{Proof_bn}.  Let 
$$A_J:=\{t\in [l_0,l_1]\;:\;\int_{\mathcal B_l}f_J(T^t_{\a,\vphi}(x))g_J(x)d \mu>0\}$$ 
and let $\rho_J(t)=1$ if $t\in A_J$ and $\rho_J(t)=-1$ if $t\in [l_0,l_1]\setminus A_J$. Let $F^T_J$ denote as above a flow-box map  and let $R^T_J\subset M$ denote its range. Then, by Cauchy-Schwarz (H\"older) inequality, we have
\begin{multline}
\label{eq:Koch_CILS_0}
\int_{l_0}^{l_1}\left|\int_{\mathcal B_l}f_J(T^t_{\a,\vphi}(x))g_J(x)d \mu\right|dt=\int_{\mathcal B_l \cap R^T_J}\left(\int_{l_0}^{l_1}\rho_J(t) f_J(T^t_{\a,\vphi}(x))dt\right)g_J(x)d \mu \\ \leq
\left(\int_{\mathcal B_l \cap R^T_J}\left(\int_{l_0}^{l_1}\rho_J(t)
f_J(T^t_{\a,\vphi}(x))dt\right)^2d\mu\right)^{1/2}
\left(\int_{\mathcal B_l\cap R^T_J}g_J(x)^2d\mu\right)^{1/2} \\ \leq
 \Vert g_J \Vert_0 \mu(\mathcal B_l \cap R^T_J)^{1/2}
\left(\int_{\mathcal B_l}\left(\int_{l_0}^{l_1}\rho(t)f_J(T^t_{\a,\vphi}(x))dt\right)^2d\mu\right)^{1/2}.
\end{multline}
We split the auto-correlation integral on the RHS of formula \eqref{eq:Koch_CILS_0}, as follows:
\begin{multline}
\label{eq:Koch_CILS_0_bis}
\int_{\mathcal B_l  }\left(\int_{l_0}^{l_1}\rho_J(t)
f_J(T^t_{\a,\vphi}(x))dt\right)^2d\mu  \\ \leq \left(\int_{\mathcal B_l }\left(\int_{l_0}^{l_1}
\left(\int_{r\in[l_0,l_1]\;:\;|r-t|\leq 10T}
\rho(r)\rho(t)f_J(T^t_{\a,\vphi}(x))f_J(T^r_{\a,\vphi}(x))dr\right)dt\right)d\mu\right)\\ +
\left(\int_{\mathcal B_l }\left(\int_{l_0}^{l_1}\left(\int_{r\in[l_0,l_1]\;:\;|r-t|\geq 10T}
\rho(r)\rho(t)f_J(T^t_{\a,\vphi}(x))f_J(T^r_{\a,\vphi}(x))dr\right)dt\right)d\mu\right). 
\end{multline}
By invariance of the measure, since $f_J$ is supported on $R^T_J$, we can write
\begin{equation}
\label{eq:push_fwd}
\begin{aligned}
\int_{\mathcal B_l  }  f_J(T^t_{\a,\vphi}(x))    f_J(T^r_{\a,\vphi}(x)) d\mu  &= \int_{T^{t}_{\a,\vphi}(\mathcal B_l  )  }  f_J(x)   f_J(T^{r-t}_{\a,\vphi}(x)) d\mu
\\ &= \int_{ T^{t}_{\a,\vphi}(\mathcal B_l  ) \cap R^T_J }  f_J(x)   f_J(T^{r-t}_{\a,\vphi}(x)) d\mu \,.
\end{aligned}
\end{equation}
hence we have the immediate estimate
\begin{multline}
\label{eq:Koch_CILS_1}
 \left(\int_{\mathcal B_l }\left(\int_{l_0}^{l_1}\left(\int_{r\in[l_0,l_1]\;:\;|r-t|\leq 10T}
\rho(r)\rho(t)f_J(T^t_{\a,\vphi}(x))f_J(T^r_{\a,\vphi}(x))dr\right)dt\right)d\mu\right) \\ \leq  20 T \Vert f_J \Vert_0^2 (l_1-l_0) \mu (T^{t}_{\a,\vphi}(\mathcal B_l ) \cap R^T_J   ) \,.
\end{multline}
We then have the following crucial fact. For any $(r,t) \in [l_0, l_1]^2$ such that $10 T \leq \vert r-t\vert  \leq T_J/10$,  either  $x'_t:= T^t_{\a,\vphi}(x) \in R^T_J$ or $T^r_{\a,\vphi}(x) = T^{r-t}_{\a,\vphi}(x'_t)  \in R^T_J$ (but not both), hence, by taking into account that the function $f_J$ is supported in $R^T_J$, we have
\begin{multline}
\label{eq:Koch_CILS_1_bis}
\left(\int_{\mathcal B_l}\left(\int_{l_0}^{l_1}\left(\int_{r\in[l_0,l_1]\;:\;|r-t|\geq 10T}
\rho(r)\rho(t)f_J(T^t_{\a,\vphi}(x))f_J(T^r_{\a,\vphi}(x))dr\right)dt\right)d\mu\right)
\\ = \left(\int_{\mathcal B_l}\left(\int_{l_0}^{l_1}\left(\int_{r\in[l_0,l_1]\;:\;|r-t|\geq T_J /10}
\rho(r)\rho(t)f_J(T^t_{\a,\vphi}(x))f_J(T^r_{\a,\vphi}(x))dr\right)dt\right)d\mu\right)\,.
\end{multline}
By formula~\eqref{eq:push_fwd}, our goal is now to estimate for $r-t \geq T_J/10$ the integral 
$$
\int_{ \mathcal B_l}  f_J(T^t_{\a,\vphi}(x))    f_J(T^r_{\a,\vphi}(x)) d\mu = \int_{ T^{t}_{\a,\vphi}(\mathcal B_l ) \cap R^T_J }  f_J(x)   f_J(T^{r-t}_{\a,\vphi}(x)) d\mu \,.
$$
Let then $\bt= r-t$ (which without of generality we can assume
positive) and recall  the notation established in Subsection \ref{Proof_bn}: let $\bl=[\bt]$ and  $\bn$ to be the unique integer such that $q_\bn \leq \bl<q_{\bn+1}$.  Let $\bk$ be any integer such that  $q_{\bk} \in [q_\bn\log ^{15} q_\bn, q_\bn\log^{20} q_\bn]$. We recall that by construction we have $q_\bk \in [q_n^{\frac{1}{41}},  q_n^{\frac{1}{19}} \log^{20} q_n]$.   By the lower bound in formula~\eqref{eq:T_J}, since $t^* \geq T_J/10$, it follows that $\lambda ( J) \geq 1/q_{\bk}$ and that, for any interval $\bar I \in \cI_{\bk}$, 
we have $$ \lambda ( I) \leq 1/q_{\bk}  \leq 1/q_n^{1/41}\,.$$ 
We recall that the set $\mathcal B_l$  was defined (in formula \eqref{def:b} of Subsection~\ref{been}) as a union of a finite number of disjoint complete towers $U_1, \dots, U_m$.  By property $(B_5)$ in Proposition~\ref{prop.badset}, for every $t \in [l_0, l_1]$ there exist complete towers $\cT_{t,1}, \dots, \cT_{t,m}$ which are approximations of the sets $T^{t}_{\a,\vphi}(U_1), \dots, T^{t}_{\a,\vphi}(U_m)$ respectively, and such that 
It therefore suffices to estimate
 $$
\sum_{i=1}^m\left|\int_{\cT_{t,i}  \cap R^T_J}f_J(T^{t^*}_{\a,\vphi}(x))f_J(x)d \mu\right| \,.
 $$
Following Lemma~\ref{lemma.independence},  we distinguish two cases. In the first case we have $N(\theta_{t,i}, h_{t,i}) \leq q_n^{1/3}$. By the bound in~\eqref{eq:small_tower}  we then have
\begin{equation}
\label{eq:Koch_CILS_2}
\left|\int_{\cT_{t,i}  \cap R^T_J  }f_J(x)f_J(T^{\bt}_{\a,\vphi}(x))
d\mu \right| \leq \Vert f_J \Vert_0^2 \,
\frac{\mu(\cT_{t,i}  \cap R^T_J)}{q_n^{\frac{1}{10}}} \leq    \frac{C'}{q_n^{\frac{1}{10}}}  \frac{ \mu(\cT_{t,i}  \cap R^T_J)}{ \lambda(J)} .
\end{equation}
In the second case we have $N(\theta_{t,i}, h_{t,i}) \geq q_n^{1/3}$. From the bound in~\eqref{cortow3}, for 
all $I:= \bar I \times \{s\}$ with $\bar I \in \cI_{\bk}$ we have
\begin{equation}
\label{eq:Koch_CILS_3}
\left|\int_{\overline{\cT_{t,i} \cap  I}}f_J(T^{\bt}_{\a,\vphi}(\theta,s)) f_J(\theta,s)
d\theta\right|\leq C' \{\Nzero(f_J,f_J) + \None(f_J,f_J) \lambda ( I)\}
\frac{\lambda(\cT_{t,i} \cap  I)}{q_n^{50\eta}}.
\end{equation}
By the lower bound \eqref{eq:T_J} it follows that 
$\lambda ( J) \geq 1/q_{\bk}$, hence there exists a product set $E^T_{J,{\bk}}\in W$ with base  $\bar E^T_{J,{\bk}}$ measurable with respect to the  partition $\cI_{\bk}$, such that $R^T_J \subset E^T_{J,\bk}$ and we have
$$
\mu (E^T_{J,\bk}) \leq C \mu (R^T_J) = C' T \lambda ( J)\,.
$$
Thus, from the bound in formula~\eqref{eq:Koch_CILS_3} we derive the following estimate:
\begin{equation}
\label{eq:Koch_CILS_4}
\left|\int_{\cT_{t,i} \cap R^T_J  }f_J(x)f_J(T^{\bt}_{\a,\vphi}(x)) 
d\mu \right |\leq \frac{C'}{q_n^{50 \eta}}   \frac{\mu(\cT_{t,i} \cap  R^T_J)}{ \lambda ( J) } \,,
\end{equation}
hence, by formulas~\eqref{eq:Koch_CILS_2} and~\eqref{eq:Koch_CILS_4}, we have
\begin{equation}
\label{eq:Koch_CILS_4bis}
\left|\int_{T^{t}_{\a,\vphi}(U_i ) \cap R^T_J  }f_J(x)f_J(T^{\bt}_{\a,\vphi}(x)) 
d\mu \right |\leq \frac{C'}{q_n^{50 \eta}}   \frac{\mu( \cT_{t,i} \cap  R^T_J) }{ \lambda ( J)} + C'' \frac{\mu(\cT_{t,i}\triangle T^{t}_{\a,\vphi}(U_i )}{\lambda (J)} \,.
\end{equation}
After summing over all towers of $\mathcal B_l$, that is, over $i\in \{1, \dots, m\}$, by the measure bound in~\eqref{eq:tower_error}, we derive that
\begin{multline}
\label{eq:Koch_CILS_5}
\int_{  T^t_{\a,\vphi}(\mathcal B_l)\cap R^T_J  }f_J(x)f_J(T^{\bt}_{\a,\vphi}(x)) 
d\mu   \leq   \sum_{i=1}^m\left|\int_{T^{t}_{\a,\vphi}(U_i ) \cap R^T_J  }f_J(x)f_J(T^{\bt}_{\a,\vphi}(x)) 
d\mu \right |  \\ \leq  \frac{C'}{q_n^{50 \eta}}   \frac{\mu( \cup_{i=1}^m \cT_{t,i} \cap  R^T_J) }{ \lambda ( J)}    
+  C'' \frac{q_n^{-3/5+ 15\eta} }{\lambda (J)}  \,.
\end{multline} 
By the equidistribution properties of the base rotation under the Diophantine assumption on the rotation number, there exist constants $C', C''>0$  such that
\begin{equation}
\label{eq:bad_equidist}
\max \left( \mu(\cup_{i=1}^m \cT_{t,i} \cap  R^T_J )  , \mu(\mathcal B_l \cap R^T_J)  \right) \leq C' \mu (R^T_J) \frac{\log^2 q_n}{q_n^{1/2-4\eta}} \leq      C''  \frac{\lambda ( J)}{q_n^{1/2-5\eta}} \,,
\end{equation}
hence by formulas \eqref{eq:Koch_CILS_0_bis},  \eqref{eq:push_fwd}, \eqref{eq:Koch_CILS_1}, \eqref{eq:Koch_CILS_1_bis}   and \eqref{eq:Koch_CILS_5} we derive that there exist constants $C', C''>0$
such that
\begin{multline}
\label{eq:Koch_CILS_6}
\int_{\mathcal B_l }\left(\int_{l_0}^{l_1}\rho_J(t)
f_J(T^t_{\a,\vphi}(x))dt\right)^2d\mu \leq   C'(\frac{1}{q_n^{1/2 +45 \eta}}+ \frac{1 }{q_n^{3/5- 15\eta}\lambda (J)})
(l_1-l_0)^2 \\ +  C'' \frac{l_1-l_0}{\lambda (J)} ( \frac{\lambda ( J)}{q_n^{1/2-5\eta}} +  \frac{1}{q_n^{3/5- 15\eta}})\,. 
\end{multline} 
By taking into account that $\lambda (J) \geq 1/ q_{k*} \geq  1/q_n^{1/18}$ and that that $l_1-l_0 \geq q_n^{80\eta}$,  we also have
$$
\frac{l_1-l_0}{q_n^{1/2-5\eta}}  \leq    \frac{(l_1-l_0)^2}{q_n^{1/2+45\eta}}   \quad \text{ and} \quad    \frac{1}{q_n^{3/5- 15 \eta}}   \leq  \min \left( \frac{\lambda ( J)}{q_n^{1/2+45\eta}} , \frac{ (l_1-l_0)\lambda(J)}{ q_n^{1/2+ 45\eta} }  \right)\,,
$$
hence from formula~\eqref{eq:Koch_CILS_6} we derive the following bound on self-correlations:
\begin{equation}
\label{eq:Koch_CILS_6_bis}
\int_{\mathcal B_l }\left(\int_{l_0}^{l_1}\rho_J(t)
f_J(T^t_{\a,\vphi}(x))dt\right)^2d\mu \leq    C \frac{(l_1-l_0)^2}{q_n^{1/2 +45 \eta}}   \,.
\end{equation} 
Finally, from formulas~\eqref{eq:Koch_CILS_0}, \eqref{eq:bad_equidist} and~\eqref{eq:Koch_CILS_6_bis} we derive the bound
\begin{multline}
\label{eq:Koch_CILS_7}
\int_{l_0}^{l_1}\left|\int_{\mathcal B_l }f_J(T^t_{\a,\vphi}(x))g_J(x)d \mu\right|dt 
\\ \leq  \Vert g_J \Vert_0  \mu(\mathcal B_l \cap R^T_J)^{1/2}  \left( \int_{\mathcal B_l }\left(\int_{l_0}^{l_1}\rho_J(t)f_J(T^t_{\a,\vphi}(x))dt\right)^2d\mu    \right)^{1/2}
 \\ \leq C' \left(\frac{\mu( \mathcal B_l \cap R^T_J)}{\lambda (J)}\right)^{1/2}  
   \frac{ (l_1-l_0)}{ q_n^{1/4 +20\eta}}    \leq  C''   \frac{ (l_1-l_0)}{ q_n^{1/2 +15\eta}}     \,.
\end{multline} 
Claim~\ref{Koch_Claim2} is therefore proved.  
\end{proof}
From Claim~\ref{Koch_Claim2}, together with the immediate estimate 
$$
\left|\int_{\mathcal B_l \cap R^T_J}f_J(T^t_{\a,\vphi}(x))g_J(x)d\mu\right|  \leq C' \,
 \frac{\mu(\mathcal B_l \cap R^T_J)}{\lambda ( J)}  
$$
and the bound \eqref{eq:bad_equidist}, we derive our final estimate on the bad  set, that is, as soon as $l_0\geq T_J$, 
\begin{equation}
\label{eq:Koch_CILS_bad}
\int_{l_0}^{l_1}\left|\int_{\mathcal B_l \cap R^T_J  }f_J(T^t_{\a,\vphi}(x))g_J(x)d \mu\right|^2 dt
\leq C'   \frac{l_1-l_0}{ q_n^{1 + \eta}} \leq C' \frac{l_1-l_0}{l_0^{1+\eta/2}} \leq  \frac{C''}{l^{1+\eta/3}} \,.
\end{equation}
The statement of Theorem \ref{thm.koc}  then follows  from the estimates in formulas~\eqref{eq:Koch_CILS_good} and~\eqref{eq:Koch_CILS_bad}.
\end{proof}

 The hypotheses of the criterion for countable Lebesgue spectrum stated in Theorem~\ref{thm:CLS} also hold for all smooth time-changes of a horocycle flow $\{h^t\}$ on the unit tangent bundle $M$ of a compact hyperbolic surface, as we will explain below. 
 
 \medskip
 Let $\cal M= \{M_\zeta\vert \zeta>0\}$ denote in this case the  trivial family such that $M_\zeta=M$ for
all $\zeta>0$. For such a trivial family, all  families of flow-boxes are admissible and, for all  $T>0$ and $C>0$, the families of functions $\mathcal F_k (F^T_J, C,\zeta):= \mathcal F_k (F^T_J, C)$ and $\mathcal G_k (F^T_J, C,\zeta):= \mathcal G_k (F^T_J, C)$, introduced in Definition \ref{def:test_functions}, are independent of $\zeta>0$. 

\smallskip
\begin{proposition} \label{prop:spect_type_tail_horo}
For any sufficiently smooth time change $\{h^t_\vphi\}$ of a horocycle flow  $\{h^t\}$, there exists a family $\Phi:=
\{F_J\}$ of maximal  flow-boxes with $\inf \lambda (J) = 0$ (and $\sup T_J = +\infty$), such that for any $T>0$ and $C>0$, and for any family of pairs  of functions $\{(f_J, g_J)\}$ such that $f_J \in \mathcal F_1 (F^T_J, C)$ and $g_J \in \mathcal G_1(F^T_J, C)$ we have 
$$
\inf_{F^J\in \Phi}  \int_{\R\setminus [-T_J, T_J]} \vert <f_J\circ h^t_{\vphi}, g_J>\vert^2 dt =0 \,.
$$
\end{proposition}

\begin{proof}  The estimates required to prove this assertion are carried out in Subsection 6.3 of 
 \cite{FU}  where the authors prove that sufficiently smooth time-changes of the horocycle flow have Lebesgue maximal spectral type (see the Remark~\ref{rem:smoothness_horo} below about the smoothness assumptions).   The base of the flow-boxes are $2$-dimensional intervals of uniform (fixed) size in the geodesic
 direction and arbitrarily small size in the direction of the complementary horocycle.  Such flow-boxes and the
 test functions of the classes $\mathcal F_1 (F^T_J, C)$ and  $\mathcal G_1 (F^T_J, C)$ are introduced in Lemma 28 of \cite{FU} (where the relevant estimates on their derivatives are proved).  The key estimates on correlations of functions in the classes $\mathcal F_1 (F^T_J, C)$ and  $\mathcal G_1 (F^T_J, C)$ (supported on flow-boxes) are stated  in \cite{FU} as formulas $(40)$ and $(41)$ in the proof of Lemma 28. In fact, estimates on correlations 
 follow from those formulas by taking into account the formula of Lemma 9 of \cite{FU}, which reduces estimates on correlations for the time changes to  estimates on curvilinear integrals along push-forwards of geodesic arcs.
\end{proof} 

\begin{remark} \label{rem:smoothness_horo} The relevant estimates on correlations from  \cite{FU}  (see in particular Lemma 24) are stated
for time-change functions of $L^2$ Sobolev regularity  $r>11/2$. However, the argument that establishes that the maximal spectral type is Lebesgue, which also implies the hypotheses of our criterion for countable Lebesgue spectrum,  hold under the milder hypothesis that the time-change function is 
$C^1$ and $C^2$ in the geodesic direction. 
\end{remark}

We are now ready to derive our conclusions. 
By Theorem \ref{cob}, Theorem~\ref{thm:CLS}, and Theorem \ref{thm.koc} 
we derive the completion of the
proof of our main result:

\begin{corollary}
\label{cor:main} 
For $\a \in D_{\log,\xi}$, $\xi<\frac{1}{10}$, the dynamical system $(T^t_{\alpha,\vphi}, M, \mu)$ has 
Lebesgue spectral type with countable multiplicity.
\end{corollary}

By Theorem 25 of~\cite{FU}, \S 6.3,  Theorem~\ref{thm:CLS} of Subsection~\ref{sec6.decay}, and Proposition~\ref{prop:spect_type_tail_horo}, we derive a similar result  for smooth time changes of the horocycle
flow, thereby completing the proof of the Katok-Thouvenot conjecture (\cite{KT}, Conjecture 6.8):

\begin{corollary}
\label{cor:horo_time_changes} 
Any flow obtained by a sufficiently smooth time change from a horocycle flow has Lebesgue spectral
type with countable multiplicity.
\end{corollary}

  \appendix
  
  \bg 
  
  \section{Birkhoff sums estimates}\label{app.birkhoff} \blk 
  \begin{proof}[Proof of Lemma \ref{sec.fir}] By the definition of $u_I$ in \eqref{ui}, we know that  there exist
  $x_0\in I\cap T^{-t}_{\a,\vphi}(W)$ and $t_0\in [l_0,l_1]$ such that  $\vphi''_{N(x_0,t_0)}(\bar{x}_0)\geq 
  q_n^{3-\eta}\log^9q_n$. 
Since $N(x_0,t_0)<cq_{n+1}$, by \eqref{koks2}, we get 
$$(q_n)^{3-\eta}\log^9q_n<
\vphi''_{N(x_0,t_0)}(\bar{x}_0)<2c^3q_{n+1}^{3-\eta}+\frac{1}{x^{N(x_0,t_0)}_{min}}
<q^{3-\eta}_n\log^4q_n+\frac{1}{x^{N(x_0,t_0)}_{min}},$$
which means that there exists $j\in[0,N(x_0,t_0)-1]$ s.t. 
\begin{equation}\label{cusp}\bar{x}_0+j\alpha\in [-\frac{1}{q_n\log ^{3} q_n},\frac{1}{q_n\log ^{3} q_n}].
\end{equation}
We will show that, for every $t\in [l_0,l_1]$ and every $x\in I\cap T^{-t}_{\a,\vphi}(W)$, we have
\begin{equation}\label{nkept}
N(x,t)>j.
\end{equation}
Let us first show how \eqref{nkept} implies \eqref{xmin} and \eqref{eq:sec}. Since $\lambda ( I)\leq \frac{1}{q_n\log^{15}q_n}$
it follows by \eqref{nkept} that for every $t\in [l_0,l_1]$ and every $x\in I\cap T^{-t}_{\a,\vphi}(W)$ 
$$
x_{min}^{ N(x,t)}\leq d(\bar x+j\alpha,0)\leq d(\bar{x}+j\a,0)+\lambda ( I)\leq \frac{1}{2q_n\log^3q_n}.
$$
This gives \eqref{xmin}. For \eqref{eq:sec}, we have by \eqref{koks1} and \eqref{koks2}
$$
|\vphi'_{N(x,t)}(\bar x)|\geq \left(\frac{2}{3x_{min}^{N(x,t)}}\right)^{2-\eta}-4q^{2-\eta}_{n+2}
\geq \left(\frac{1}{2x_{min}^{N(x,t)}}\right)^{2-\eta},
$$
and
$$
|\vphi ''_{N( x,t)}(\bar x)|\leq \left(\frac{3}{2x_{min}^{N( x,t)}}\right)^{3-\eta}+4q^{3-\eta}_{n+2}
\leq \left(\frac{2}{x_{min}^{N( x,t)}}\right)^{3-\eta}.
$$
This gives \eqref{eq:sec}. Therefore it remains to show \eqref{nkept}. Notice that for $x\in I\cap T^{-t}_{\a,\vphi}(W)$, \eqref{nkept} is equivalent to
\begin{equation}\label{nkept2}
N( x,t)\geq j
\end{equation}
(since $T^{t}_{\a,\vphi}(x)=(\bar x+N( x,t)\a,s')\in W$). Notice also that if the lower bound
\begin{equation}\label{nkept3}
N( x,t_0)\geq j,
\end{equation}
holds, then \eqref{nkept2} follows for all $t\in [l_0,l_1]$. Indeed, otherwise we have
\begin{multline*}(4q_{n+1})^{1-\eta}\geq \vphi(\bar x+N( x,t)\a)\geq t+s-\vphi_{N( x,t)}(\bar x)\geq\\ \vphi_{N( x,t_0)}(\bar x)-\vphi_{N( x,t)}(\bar x)\geq \vphi(\bar x+j\a)\geq q_n^{1-\eta}\log^2 q_n,
\end{multline*}
a contradiction. Hence it remains to show \eqref{nkept3}. Assume by contradiction that $N( x,t_0)<j$ for some 
$x\in I \cap T^{-t}_{\a,\vphi}(W)$. Then, by the definition of $j$, we have 
\begin{equation}\label{disj}
\bigcup_{i=0}^{N( x,t_0)}R_\a^i(\bar I)\cap \left[-\frac{1}{5q_{n+2}},\frac{1}{5q_{n+2}}\right]=\emptyset.
\end{equation}
Therefore, for every $\theta \in \bar I$ by \eqref{koks1} we have
\begin{equation}\label{fvd}
|\vphi'_j(\theta)|<10q_{n+2}^{2-\eta}.
\end{equation}
Hence, by~\eqref{cusp}, \eqref{disj}, and~\eqref{fvd}, for some $\theta \in \bar I$, we get
\begin{multline*}
(5q_{n+2})^{1-\eta}\geq \max(\vphi(\bar x+N(x,t_0)\a),\vphi(\bar{x}_0+N(x_0,t_0)\a)) \geq\\ |\vphi_{N(x,t_0)}(\bar x) -\vphi_{N(x_0,t_0)}(\bar{x}_0)|\geq \vphi(\bar{x}_0+j\a)-|\vphi_{j}'(\theta)|\lambda ( I)\geq 1/2\left(q_n\log^3 q_n\right)^{1-\eta}-(q_{n+1})^{1-\eta}, 
\end{multline*}
which yields a contradiction since $q_{n+2}<q_n\log^{2+3\xi}q_n$. So \eqref{nkept3} holds. This completes the proof of Lemma \ref{sec.fir}. \end{proof}

  \begin{proof}[Proof of Lemma \ref{xze}]
  
Notice that for some $\theta\in [\bar x,\bar x_0]$ we have
$$
\vphi'_{N(x)}(\bar x)-\vphi'_{N( x_0)}(\bar x_0)=\vphi''_{N( x_0)}(\theta)(\bar x-\bar x_0)+
\vphi'_{N( x)-N( x_0)}(\bar x+N(x_0)\a).
$$
Since 
$|\vphi''_{N( x_0)}(\bar x_0)|\leq q_n^{3-\eta}\log^{10}q_n$, by \eqref{koks2} for $N=N( x_0)$ it follows that 
\begin{equation}\label{nin}
\{\bar x_0,\dots,\bar x_0+(N(x_0)-1)\a\}\cap [-\frac{1}{q_n\log^4 q_n},\frac{1}{q_n\log^4 q_n}]=\emptyset.
\end{equation}
Notice that since $x_0\in W$, for some constant $c>0$, we have
$$\vphi_{N( x_0)}(\bar x_0)\geq t-q_n^{3/4}\geq cq_n.
$$
So by \eqref{nin}, by \eqref{koks0} for $N=N( x_0)$ and by the Diophantine condition on $\a$, we have 
$q_{r+1}\geq \frac{c q_n}{10}$ (where $r$ is such that $q_r\leq N( x_0)\leq q_{r+1}$ ).
But then by \eqref{koks0} for $N=N( x_0)$ and $x=\theta$ and again by the Diophantine condition on $\a$, we have 
$$\vphi''_{N( x_0)}(\theta)\geq \frac{q_n^{3-\eta}}{\log^{5} q_n}.$$
 Define $A_{x,x_0}:=\vphi''_{N( x_0)}(\theta)$. We will show that 
\begin{equation}\label{wnc}|\vphi'_{N( x)-N( x_0)}(\bar x+N( x_0)\a)|\leq \frac{A_{x,x_0}}{10}|\bar x-\bar x_0|.
\end{equation}
By the definition of $N( x)$, $N( x_0)$ and since $T^{t}_{\a,\vphi}(x)\in V$, for some $z\in [\bar x, \bar x_0]$ we have
\begin{multline*}
2q_n^{3/4(1-\eta)}\geq |(t-\vphi_{N( x_0)}(\bar x_0))-(t-\vphi_{N( x)}(\bar x))|\geq
|\vphi_{N( x)}(\bar x)-\vphi_{N( x_0)}(\bar x_0)|=\\ |\vphi'_{N(x_0)}(\bar x_0)(\bar x-\bar x_0)+
\vphi''_{N( x_0)}(\bar z)(\bar x-\bar x_0)^2+
\vphi_{N( x)-N( x_0)}(\bar x+N( x_0)\a)|.
\end{multline*}
Moreover, we have the following:

\smallskip
{\bf Claim. }{\it  If $\vphi ''_{N( x)}(\bar x)<q_n^{3-\eta}\log^{10}q_n$, then for every $z\in I$
$$\vphi ''_{N( x)}(\bar z)<30q_n^{3-\eta}\log^{10}q_n.$$} 

Therefore
$$
|\vphi_{N( x)-N( x_0)}(\bar x+N( x_0)\a)|\leq 2q_n^{3/4(1-\eta)}+q_n^{7/4+\eta}|\bar x-\bar x_0|+
q_n^{3-\eta}\log^5 q_N(\bar x-\bar x_0)^2,
$$
so by Lemma \ref{fi},
\begin{multline}\label{fip}|\vphi'_{N( x)-N( x_0)}(\bar x+N( x_0)\a)|\leq \\3\left( 4q_n^{3/2(1-\eta^2)}+q_n^{(7/2+2\eta)(1+\eta)}|\bar x-\bar x_0|^2+
q_n^{(6-2\eta)(1+\eta)}\log^{10+2\eta} q_n(\bar x-\bar x_0)^4\right).
\end{multline}
Notice however that since $\frac{1}{q_n\log ^{15} q_n}\geq \frac{1}{q_k}\geq \lambda ( I)\geq |\bar x-\bar x_0|\geq \frac{1}{q_n^{3/2-2\eta}}$, we have  
\begin{multline*}
\frac{q_n^{3-\eta}}{\log^{10} q_n}|\bar x-\bar x_0|\geq\\ 100\max\left(q_n^{3/2(1-\eta^2)},q_n^{(7/2+2\eta)1+\eta}|\bar x-\bar x_0|^2,
q_n^{(6-2\eta)(1+\eta)}\log^{10+2\eta} q_n(\bar x-\bar x_0)^4\right).
\end{multline*}
Therefore and using \eqref{fip} we get \eqref{wnc}
which completes the proof of Lemma \ref{xze}.

\smallskip
We just have to give the proof of the claim.

\begin{proof}[Proof of the Claim] We know that $N( x)\leq q_{n+2}$. If $\vphi ''_{N( x)}(\bar z)\geq 30q_n^{3-\eta}\log^{10}q_n$, by \eqref{koks2} it follows that $z^{N( x)}_{min}\leq \frac{1}{3q_n\log^{\frac{10}{3-\eta}}q_n}$. But since $x,z\in I$ and $\lambda ( I)<\frac{1}{q_n\log^{15}q_n}$, we would have $x^{N( x)}_{min}\leq \frac{1}{2q_n\log^{\frac{10}{3-\eta}}q_n}$. So by applying \eqref{koks2} for $N(x)$ and $x$, we would get
$\vphi ''_{N( x)}(\bar x)\geq 2q_n^{3-\eta}\log^{10}q_n$, a contradiction. \end{proof}

\end{proof}

\section*{Acknowledgments}
The authors are very grateful to Anatole Katok and to Jean-Paul Thouvenot for valuable discussions and suggestions. B.~Fayad was supported by  ANR-15-CE40-0001 and by the project BRNUH. 
G.~Forni was supported by NSF Grants DMS 1201534 and 1600687, and by a Simons Fellowship. He would 
also like to thank the Institut de Math\'ematiques de Jussieu (IMJ) for its hospitality during the academic 
year 2014-15 when work on this paper began, and during the academic year 2018-19 when the current
version of the paper was completed.

\end{document}